\documentclass[12pt,a4paper]{extarticle}

\usepackage{amsmath}
\usepackage{amssymb}
\usepackage{amsthm}
\usepackage{xcolor}
\usepackage{bbm}
\usepackage{graphicx}
\usepackage{verbatim}

\usepackage[T1]{fontenc}
\usepackage[ansinew]{inputenc}
\usepackage[german,english,french,USenglish]{babel}
\usepackage[english]{babel}

\makeatletter
\numberwithin{equation}{section}
\theoremstyle{plain}

\newtheorem{dummytheorem}{Dummy-Theorem}[section]
\newcommand{\proofendsign}{$\Box$} 

\newtheorem{lemma}[dummytheorem]{Lemma}
\newtheorem{theorem}[dummytheorem]{Theorem}
\newtheorem{proposition}[dummytheorem]{Proposition}

\newtheorem{example}[dummytheorem]{Example}

\newtheorem{remark}[dummytheorem]{Remark}




\newcommand{\N}{\mathbb{N}}

\newcommand{\Q}{\mathrm{Q}}
\newcommand{\R}{\mathbb{R}}

\newcommand{\E}{\mathbb{E}}

\newcommand{\pr}{\mathrm{P}}
\newcommand{\opr}{\overline{\mathrm{P}}}
\newcommand{\qr}{\mathrm{Q}}
\newcommand{\ex}{\mathbb{E}}

\newcommand{\eins}{\mathbbm{1}}

\newcommand{\cF}{\mathcal F}

\newcommand{\cL}{\mathcal L}
\newcommand{\cM}{\mathcal M}
\newcommand{\cP}{\mathcal P}
\newcommand{\cQ}{\mathcal Q}

\newcommand{\cS}{\mathcal S}
\newcommand{\cT}{\mathcal T}

\newcommand{\cX}{\mathcal X}

\newcommand{\hY}{\widehat{Y}}

\newcommand{\tY}{\widetilde{Y}}

\newcommand{\oY}{\overline{Y}}

\newcommand{\esssup}{\mathop{\mathrm{ess\,sup}}\displaylimits}
\newcommand{\essinf}{\mathop{\mathrm{ess\,inf}}\displaylimits}

\newcommand{\OFP}{(\Omega,{\cal F},\pr)}
\newcommand{\OFPT}{(\Omega,\cF,\pr)}

\newcommand{\tOFP}{(\overline{\Omega},\overline{{\cal F}},\overline{\pr})}

\newcommand{\OFFP}{(\Omega,{\cal F}, ({\cal F}_{t})_{0\leq t\leq T},\pr)}
\newcommand{\OFFQ}{(\Omega,{\cal F}_{T}, ({\cal F}_{t})_{0\leq t\leq T},\qr)}

\newcommand{\oOFFP}{(\overline{\Omega},\overline{{\cal F}}_{T},(\overline{{\cal F}}_{t})_{0\leq t\leq T},\overline{\pr})}
\newcommand{\oYk}{\overline{Y}^{k}}

\newcommand{\ocF}{\overline{{\mathcal F}}}

\newcommand{\taur}{\tau^{r}}

\usepackage[colorinlistoftodos,bordercolor=orange,backgroundcolor=orange!20,linecolor=orange,textsize=scriptsize]{todonotes}

\oddsidemargin=-5mm
\textwidth=170mm
\topmargin=-20mm
\textheight=250mm
\sloppy

\begin{document}
\title{Minimax theorems for American options in incomplete markets without time-consistency}
\author{Denis Belomestny\footnote{Faculty of Mathematics, University of Duisburg--Essen, {\tt denis.belomestny@uni-due.de}}
\and{\setcounter{footnote}{2}}Tobias H\"ubner\footnote{Faculty of Mathematics, University of Duisburg--Essen, {\tt tobias.huebner@stud.uni-due.de}}
\and{\setcounter{footnote}{3}}
Volker Krätschmer\footnote{Faculty of Mathematics, University of Duisburg--Essen, {\tt volker.kraetschmer@uni-due.de}}
\and{\setcounter{footnote}{4}}Sascha Nolte\footnote{Faculty of Mathematics, University of Duisburg--Essen, {\tt sascha.nolte@stud.uni-due.de}}
}

\date{}

\maketitle
\begin{abstract}
In this paper we give sufficient conditions guaranteeing the validity of the well-known minimax theorem for the lower Snell envelope with respect to a family of absolutely continuous  probability measures. Such minimax results play an important role in the characterisation of arbitrage-free prices of American contingent claims in incomplete markets. Our conditions do not rely on the notions of stability under pasting or time-consistency and reveal some unexpected connection between the minimax result and the path properties of the corresponding density process. 
\end{abstract}

{\bf Keywords:} minimax, lower Snell envelope, time-consistency, nearly sub-Gaussian random fields, metric entropies, Simons' lemma.

\section{Introduction}
\label{setup}
Let \(0<T<\infty\) and let $\OFFP$  be a filtered probability space,
where $(\cF_{t})_{0 \leq t \leq T}$ is a right-continuous filtration with $\cF_{0}$ containing only the sets of probability $0$ or $1$ as well as all the null sets of $\cF_T.$ In the sequel we shall assume without loss of generality that 
$\cF = \cF_T$.  Furthermore, let $\cQ$ denote a nonvoid set of probability measures on $\cF$ 
all absolutely continuous w.r.t. \(\pr.\)
We shall denote by $\cL^{1}(\cQ)$ 
the set of all random variables $X$ on $(\Omega,\cF,\pr)$ which are $\qr$-integrable for every $\qr\in\cQ$. 
such that $\sup_{\qr\in\cQ}\ex_{\qr}[|X|] < \infty$. 
Let \(S=(S_{t})_{0 \leq t \leq T}\) be a semimartingale w.r.t. $(\cF_t)_{0 \leq t \leq T}$ whose trajectories are right continuous and have finite left limits (càdlàg).
Consider also  another right-continuous $(\mathcal{F}_t)$-adapted stochastic process $Y~ \dot=~(Y_{t})_{0 \leq t \leq T}$  with bounded paths,
and let $\cT$ stand for the set of all finite stopping times $\tau\leq T$ w.r.t. $(\mathcal{F}_t)_{0 \leq t \leq T}.$ We also assume that the process is  {\it quasi left-uppersemicontinuous} w.r.t. $\pr$, i.e. $\limsup_{n\to\infty}Y_{\tau_{n}}\leq Y_{\tau}$ $\pr$-a.s. holds for any sequence $(\tau_{n})_{n\in\N}$ in $\cT$ satisfying $\tau_{n}\nearrow\tau$ for some $\tau\in\cT$.  
The so-called {\it lower Snell envelope of $Y$} (w.r.t. \(\cQ\)) is the 
stochastic process $(U^{\downarrow, Y}_t)_{0 \leq t \leq T}$, defined via
$$
U^{\downarrow, Y}_{t}~\dot=~\essinf_{\qr\in\cQ}\esssup_{\tau\in\cT, \tau\geq t}\ex_{\qr}[Y_{\tau}|\cF_{t}] \quad (t\in [0,T]).
$$ 
The main  objective of our work is to find sufficient conditions for  the validity of the following minimax result 
\begin{equation}
\label{eq: minimax_intr}
\sup_{\tau\in\cT}\, \inf_{\qr\in\cQ}\,\ex_{\qr}[Y_{\tau}] = \inf_{\qr\in\cQ}\,\sup_{\tau\in\cT}\,\ex_{\qr}[Y_{\tau}] = U^{\downarrow, Y}_{0}.
\end{equation}
In financial mathematics this type of results appears to be useful in the characterisation of arbitrage-free prices of American contingent claims in incomplete markets. If \(\cM\) stands for the family of equivalent local  martingale measures w.r.t. $S$, i.e.
\begin{eqnarray*}
\cM=\{Q\sim P\, | \,  S \mbox{ is a local martingale under } Q\},
\end{eqnarray*}
then  the set  $\Pi(Y)$ of the so-called arbitrage-free prices for $Y$ with respect to \(\cM\) can be defined as the set of all real numbers $c$ fulfilling two  properties: 
\begin{enumerate}
\item[$(i)$]
 $c \leq \ex_\qr[Y_\tau]$ for some stopping time $\tau \in \cT$  and a martingale measure $\qr \in \cM$;
\item[$(ii)$]
for any stopping time $\tau' \in \cT$ there exists some $\qr' \in \cM$ such that $c \geq \ex_{\qr'}[Y_{\tau'}]$.
\end{enumerate}
The above definition implies that given \(c\in \Pi(Y),\) it holds
\begin{eqnarray*}
\sup_{\tau\in\cT} \inf_{\qr\in\cM}\ex_{\qr}[Y_\tau]\leq c \leq \sup_{\tau\in\cT} \sup_{\qr\in\cM}\ex_{\qr}[Y_\tau].
\end{eqnarray*}
In particular, the lower Snell-envelope at time $0$ gives the greatest lower bound for the arbitrage free price of an American option. The following important characterisation of the set  $\Pi(Y)$ can be found in \cite{Trevino2008} (Theorem 1.20) or \cite{Trevino2012}. 
\begin{theorem}
\label{thm: main_char}
Suppose that \(\{Y_{\tau}\mid \tau\in\cT\}\) is uniformly $\qr$-integrable 
for any \(\qr\in\cM\), and that $ Y = (Y_{t})_{0 \leq t \leq T}$ is upper-semicontinuous in expectation from the left w.r.t. every $\qr\in\cM$, i.e. $\limsup_{n\to\infty}\ex_{\qr}[Y_{\tau_n}]\leq\ex_{\qr}[Y_{\tau}]$ for any increasing sequence $(\tau_{n})_{n\in\N}$ converging to $\tau\in\cT$.  If \(\cM\) denotes the set of equivalent local martingale measures w.r.t. $S$, then the set $\Pi(Y)$ of arbitrage-free prices for $Y$ corresponding to $\cM$ is a real interval with endpoints
\begin{eqnarray*}
\pi_{\inf}(Y)\doteq\inf_{\qr\in\cM}\sup_{\tau\in\cT} \ex_{\qr}[Y_\tau]=\sup_{\tau\in\cT} \inf_{\qr\in\cM}\ex_{\qr}[Y_\tau]
\end{eqnarray*}
and
\begin{eqnarray*}
\pi_{\sup}(Y)\doteq\sup_{\qr\in\cM}\sup_{\tau\in\cT} \ex_{\qr}[Y_\tau]=\sup_{\tau\in\cT} \sup_{\qr\in\cM}\ex_{\qr}[Y_\tau].
\end{eqnarray*}
\end{theorem}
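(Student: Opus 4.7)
The strategy is to decouple the theorem into four pieces: (a) $\Pi(Y)$ is an interval; (b) $\sup \Pi(Y) = \pi_{\sup}(Y)$; (c) $\inf \Pi(Y)$ equals the maximin $\sup_{\tau}\inf_{\qr} \ex_{\qr}[Y_{\tau}]$; and (d) the minimax identity $\inf_{\qr}\sup_{\tau}\ex_{\qr}[Y_{\tau}] = \sup_{\tau}\inf_{\qr}\ex_{\qr}[Y_{\tau}]$. The identity $\sup_{\qr}\sup_{\tau} = \sup_{\tau}\sup_{\qr}$ appearing in the definition of $\pi_{\sup}(Y)$ is automatic and needs no argument.

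For (a), I would observe that condition $(i)$ is monotone downward in $c$, since the same pair $(\tau, \qr)$ witnesses it for every smaller value, whereas condition $(ii)$ is monotone upward, since the same selection $\tau' \mapsto \qr'$ witnesses it for every larger value. Hence if $c_{1} \le c_{2}$ both lie in $\Pi(Y)$ and $c \in [c_{1},c_{2}]$, then $c$ inherits $(i)$ from $c_{2}$ and $(ii)$ from $c_{1}$, so $\Pi(Y)$ is convex. The one-sided bounds $\sup \Pi(Y) \le \pi_{\sup}(Y)$ and $\inf \Pi(Y) \ge \sup_{\tau}\inf_{\qr}\ex_{\qr}[Y_{\tau}]$ are then immediate from $(i)$ and $(ii)$ respectively.

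For the opposite inclusions needed in (b) and (c), I would show that any $c$ with $\sup_{\tau}\inf_{\qr}\ex_{\qr}[Y_{\tau}] < c < \pi_{\sup}(Y)$ belongs to $\Pi(Y)$: condition $(i)$ is realised by any pair $(\tau,\qr)$ with $\ex_{\qr}[Y_{\tau}] > c$, which exists by definition of $\pi_{\sup}(Y)$; condition $(ii)$ is realised for each fixed $\tau' \in \cT$ by choosing $\qr' \in \cM$ with $\ex_{\qr'}[Y_{\tau'}]$ sufficiently close to $\inf_{\qr}\ex_{\qr}[Y_{\tau'}] \le \sup_{\tau}\inf_{\qr}\ex_{\qr}[Y_{\tau}] < c$. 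The uniform integrability of $\{Y_{\tau} : \tau \in \cT\}$ under every $\qr \in \cM$ ensures that all expectations are finite and that approximating sequences in $\cM$ behave well.

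The main obstacle is step (d). The inequality $\sup_{\tau}\inf_{\qr}\ex_{\qr}[Y_{\tau}] \le \inf_{\qr}\sup_{\tau}\ex_{\qr}[Y_{\tau}]$ is free; the reverse is delicate because $\cM$ is typically neither compact in any convenient topology nor stable under pasting, so neither Sion's minimax theorem nor the standard dynamic-programming derivation of the Snell envelope is directly available. Following the approach of \cite{Trevino2008, Trevino2012}, I would exploit the semimartingale structure of $S$ together with the quasi left-uppersemicontinuity of $Y$ and the upper-semicontinuity in expectation from the left: for each $\qr \in \cM$ these jointly guarantee that the $\qr$-Snell envelope $\esssup_{\tau \ge t}\ex_{\qr}[Y_{\tau} \mid \cF_{t}]$ is attained at a first-hitting stopping time, and an optional-decomposition / superhedging argument then enables the interchange of $\sup_{\tau}$ and $\inf_{\qr}$. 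It is precisely the absence of pasting closure for $\cM$ that makes this the core technical point, and it is exactly the issue the present paper addresses by means of alternative sufficient conditions.
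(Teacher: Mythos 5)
The paper does not actually prove this theorem: it is quoted from Trevino--Aguilar (\cite{Trevino2008}, Theorem 1.20, and \cite{Trevino2012}), so there is no internal proof to compare against line by line. Your steps (a)--(c) are correct and cover the elementary part of the statement: the downward monotonicity of condition $(i)$ and the upward monotonicity of condition $(ii)$ in $c$ give convexity of $\Pi(Y)$, the two one-sided bounds follow directly from $(i)$ and $(ii)$, and the strict sandwich argument shows that every $c$ strictly between $\sup_{\tau}\inf_{\qr}\ex_{\qr}[Y_{\tau}]$ and $\pi_{\sup}(Y)$ lies in $\Pi(Y)$. The identity $\sup_{\qr}\sup_{\tau}=\sup_{\tau}\sup_{\qr}$ is indeed trivial.

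The genuine gap is your step (d), which is the only nontrivial content of the theorem (it is needed even to identify the lower endpoint with $\pi_{\inf}(Y)=\inf_{\qr\in\cM}\sup_{\tau\in\cT}\ex_{\qr}[Y_{\tau}]$), and your sketch of it rests on a false premise. You assert that $\cM$ is ``typically neither compact in any convenient topology nor stable under pasting'' and that ``the absence of pasting closure for $\cM$'' is the core difficulty. This is exactly backwards: the set of \emph{all} equivalent local martingale measures \emph{is} stable under pasting (a classical fact, see \cite{Delbaen2006}), and the introduction of the present paper states explicitly that the minimax identity \eqref{eq: minimax_intr} with $\cQ=\cM$ ``is routinely proved in the literature using the so-called stability under pasting property of $\cM$.'' That stability is the engine of the standard proof: it makes the family $\{\esssup_{\tau\geq t}\ex_{\qr}[Y_{\tau}\mid\cF_{t}]\mid\qr\in\cM\}$ downward directed and yields the recursiveness of the lower Snell envelope, from which the interchange of $\inf_{\qr}$ and $\sup_{\tau}$ follows (cf. Lemma 4.17 of \cite{Trevino2008}, which this paper invokes in Section \ref{Zeitkonsistenz}, and Proposition \ref{bereits Zeitkonsistenz}). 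The difficulty you describe --- absence of pasting stability --- is the problem for proper subsets $\cQ\subsetneq\cM$, which is the subject of the rest of the paper, not of Theorem \ref{thm: main_char}. Moreover, your proposed substitute mechanism, namely that attainment of each $\qr$-Snell envelope at a hitting time together with ``an optional-decomposition / superhedging argument'' enables the interchange, is not an argument: the optional decomposition theorem pertains to the upper Snell envelope and superhedging, and you give no step by which it would deliver the reverse inequality $\inf_{\qr}\sup_{\tau}\ex_{\qr}[Y_{\tau}]\leq\sup_{\tau}\inf_{\qr}\ex_{\qr}[Y_{\tau}]$. As written, the minimax identity, and hence the theorem, remains unproved.
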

A natural question is whether a similar characterisation can be proved for sets \(\cQ\) of equivalent measures which are strictly ``smaller'' then \(\cM.\) This question can be interesting for at least two reasons. First, the buyer (or the seller) of the option may have some preferences about  the set of pricing measures \(\cQ\) resulting in some additional restrictions on \(\cQ\) such that \(\cQ\subset \cM\). Second, the set of all martingale measures \(\cM\) may be difficult to describe in a constructive way, as typically  only sufficient conditions   for the relation \(Q\in \cM\) are available.  
\par
A careful inspection of the proof of Theorem \ref{thm: main_char} reveals that it essentially relies on the minimax identity  \eqref{eq: minimax_intr}  with \(\cQ=\cM\) which is routinely proved in the literature using the so-called stability under pasting property of \(\cM\). To recall, a set ${\cQ}$ of probability measures on $\cF$ is called {\it stable under pasting w.r.t. $\OFFP$}, if each of them is equivalent to $\pr$, and for every $\qr_{1},\qr_{2}\in\cQ$ as well as $\tau\in\cT$ the {\it pasting of $\qr_{1}$ and $\qr_{2}$ in $\tau$}, i.e. the probability measure $\qr_{3}$ defined by the pasting procedure
$$
\qr_{3}(A)~ \dot=~\ex_{\qr_{1}}[\qr_{2}[A|\cF_{\tau}]]\quad(A\in\cF),
$$
belongs to $\cQ$.  The stability under pasting implies that the set \(\cQ\) is rather ``big'' and basically coincides with \(\cM\) if we exclude the trivial case where $\cQ$ consists of one element. 
Let us mention that the property of stability under pasting is closely related to the concept of time consistency. As in \cite{Delbaen2006} we shall call a set $\cQ$ of probability measures on $\cF$ to be {\it time-consistent w.r.t. $\OFFP$}, if for any $\tau,\sigma\in\cT$ with 
$\tau\leq\sigma$ and $\pr$-essentially bounded random variables $X,Z,$ we have the following implication
$$
\essinf_{\qr\in\cQ}\ex_{\qr}[X|\cF_{\sigma}]\leq\essinf_{\qr\in\cQ}\ex_{\qr}[Z|\cF_{\sigma}]\quad\Rightarrow\quad
\essinf_{\qr\in\cQ}\ex_{\qr}[X|\cF_{\tau}]\leq\essinf_{\qr\in\cQ}\ex_{\qr}[Z|\cF_{\tau}].
$$ 
Other contributions to the minimax-relationship \eqref{eq: minimax_intr} use the property of {\it recursiveness} (see \cite{BayraktarKaratzasYao2010},
\cite{BayraktarYao2011I}, \cite{BayraktarYao2011II}, \cite{ChengRiedel2013})
$$
\essinf_{\qr\in\cQ}\ex_{\qr}\left[\essinf_{\qr\in\cQ}\ex_{\qr}[X|\cF_{\sigma}]~\Big|~\cF_{\tau}\right] = \essinf_{\qr\in\cQ}\ex_{\qr}[X|\cF_{\tau}]
$$
holding for stopping times $\sigma,\tau\in\cT$ with $\tau \leq \sigma$ and any $\pr$-essentially bounded random variable $X$. It may be easily verified  that recursiveness and time-consistency are equivalent  (see e.g. proof of Theorem 12 in \cite{Delbaen2006}). Moreover,  stability under pasting generally implies time-consistency (see  Proposition \ref{bereits Zeitkonsistenz} below, and also \cite[Theorem 6.51]{FoellmerSchied2011} for the time-discrete case). To the best of our knowledge, all studies of the minimax-relationship  \eqref{eq: minimax_intr} so far considered  only time-consistent sets $\cQ$ (see Section~\ref{discussion} for a further discussion on this issue). 
\medskip

In this paper we formulate  conditions on the family \(\cQ\) of other sort which do not rely on the notions of consistency or stability but still ensure the minimax relation \eqref{eq: minimax_intr}.  
The key is to impose a certain condition on the range of the  mapping 
$$
\mu_{\cQ}: \cF\rightarrow l^{\infty}(\cQ),~\mu_{\cQ}(A)(\qr)~:=~\qr(A),
$$
where $l^{\infty}(\cQ)$ denotes the space of all bounded real-valued mappings on $\cQ$.
This is a so-called vector measure satisfying {$\mu_{\cQ}(A_1 \cup A_2) = \mu_{\cQ}(A_1)+\mu_{\cQ}(A_2)$ for disjoint sets $A_{1}, A_{2} \in\cF$. We shall refer to $\mu_{\cQ}$ as the {\it vector measure associated with $\cQ$}. 

The paper is organized as follows. In Section~\ref{main_results} we present our main result concerning the sets $\cQ$ whose associated vector measures have relatively compact range. 
Next we deduce another criterion in terms of path properties of the corresponding density process  $(d\qr/d\pr)_{\qr\in\cQ}$. The latter characterisation is especially  useful for the case of suitably parameterized families of local martingale measures. Specifically in Section~\ref{applications to parameterized families}  we formulate  an easy to check criterion  for the case of density processes corresponding to nearly sub-Gaussian families of local martingales. 
In Section~\ref{discussion} we shall discuss related  results from the literature. Section~\ref{general abstract minimax} contains a general minimax result for the lower Snell-envelopes. The proofs of all relevant results are gathered in Section~\ref{proofs}, whereas in the appendix some auxiliary results on path properties of nearly sub-Gaussian random fields are presented.

\section{Main results}
\label{main_results} 
Throughout this paper we assume that
\begin{equation}
\label{nonatomicdominated}
(\Omega,\cF_{t},\pr_{|\cF_{t}})~\mbox{is atomless for every}~t > 0.
\end{equation}
Concerning the process $Y,$ we shall assume that
\begin{equation}
\label{Integrierbarkeit}
Y^{*} := \sup_{t\in [0,T]}|Y_{t}|\in \cL^{1}(\cQ).
\end{equation}
Moreover, the space $l^{\infty}(\cQ)$
will be endowed with the sup-norm $\|\cdot\|_{\infty}$. 
Using the notation $co(\cQ)$ for the convex hull of $\cQ$, 
our main minimax result reads as follows.
\begin{theorem}
\label{secondMinimax}
Let the range of 
$\mu_{\cQ}$ 
be relatively $\|\cdot\|_{\infty}$-compact. If
$Y = (Y_{t})_{0 \leq t \leq T}$ fulfills \eqref{Integrierbarkeit} as well as  
$\sup_{\qr\in\cQ}\ex_{\qr}[Y^{*}\eins_{\{Y^{*} > a\}}]\to 0$ for $a\to\infty$, and if 
\eqref{nonatomicdominated} holds, 
then 
\begin{eqnarray}
\label{eq:minimax_rel}
\sup_{\tau\in\cT}\, \inf_{\qr\in \cQ}\,\ex_{\qr}[Y_{\tau}] 
= \sup_{\tau\in\cT}\, \inf_{\qr\in co(\cQ)}\,\ex_{\qr}[Y_{\tau}] 
= \inf_{\qr\in co(\cQ)}\,\sup_{\tau\in\cT}\,\ex_{\qr}[Y_{\tau}].
\end{eqnarray}
\end{theorem}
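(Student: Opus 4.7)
The plan is to deduce the minimax identity from the general abstract minimax result (presumably a Simons-type theorem) announced in Section~\ref{general abstract minimax}, by verifying its hypotheses for the pair $(co(\cQ),\cT)$ and the bilinear functional $f(\qr,\tau)=\ex_{\qr}[Y_{\tau}]$. First I would dispatch the first equality in \eqref{eq:minimax_rel}: since for each fixed $\tau$ the map $\qr\mapsto\ex_{\qr}[Y_{\tau}]$ is affine, its infimum over the convex hull $co(\cQ)$ coincides with the infimum over $\cQ$. The direction $\sup\inf\leq\inf\sup$ of the second equality is the standard elementary bound, so everything reduces to the reverse inequality.

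For the reverse inequality, the key task is to convert the norm-compact range of $\mu_{\cQ}$ in $l^{\infty}(\cQ)$ into a genuine compactness property of $co(\cQ)$ in a topology that keeps each functional $\qr\mapsto\ex_{\qr}[Y_{\tau}]$ continuous. First I would observe that, because $\mu_{\cQ}(\cF)$ is relatively $\|\cdot\|_{\infty}$-compact, so is its absolutely convex hull in $l^{\infty}(\cQ)$; combining this with approximation of $L^{\infty}(\pr)$-functions by $\cF$-step functions gives that the whole family $\{\qr\mapsto\ex_{\qr}[f]:\|f\|_{\infty}\leq 1\}$ is relatively norm-compact in $l^{\infty}(\cQ)$. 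The uniform-integrability assumption $\sup_{\qr\in\cQ}\ex_{\qr}[Y^{*}\eins_{\{Y^{*}>a\}}]\to 0$ then allows truncation of $Y_{\tau}$ at level $a$ with an error that is uniform in both $\qr$ and $\tau$, so that the same precompactness transfers from bounded functions to the family $\{\qr\mapsto\ex_{\qr}[Y_{\tau}]:\tau\in\cT\}\subset l^{\infty}(\cQ)$.

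With this precompactness in hand the abstract minimax theorem should apply: $co(\cQ)$ is convex, $\ex_{\qr}[Y_{\tau}]$ is affine (hence concave and convex) in $\qr$, and the embedding $\qr\mapsto(\ex_{\qr}[Y_{\tau}])_{\tau\in\cT}$ sends $co(\cQ)$ into a set with compact closure in an appropriate function space on which each coordinate evaluation is continuous. The atomlessness hypothesis \eqref{nonatomicdominated} and the quasi-left-uppersemicontinuity of $Y$ should enter at this stage to guarantee sufficient richness of $\cT$ and the one-sided regularity of $\tau\mapsto\ex_{\qr}[Y_{\tau}]$ that a Simons-type argument requires in order to close the approximation from above.

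The main obstacle, as I see it, is the precise translation from norm-compactness of $\mu_{\cQ}(\cF)$ into exactly the form of sequential compactness (together with the coupled continuity of the evaluation maps in $\tau$) that is needed to invoke the abstract minimax theorem. In particular, identifying the right topology on $co(\cQ)$ in which it is relatively sequentially compact and in which the functionals $\qr\mapsto\ex_{\qr}[Y_{\tau}]$ behave uniformly well in $\tau\in\cT$ is the central technical point; once this bookkeeping is in place, the rest of the proof should reduce to a clean application of the general result of Section~\ref{general abstract minimax}.
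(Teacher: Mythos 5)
Your outline correctly handles the easy parts (the first equality by affinity of $\qr\mapsto\ex_{\qr}[Y_{\tau}]$, the trivial inequality $\sup\inf\leq\inf\sup$, and truncation of $Y$ via the uniform-integrability hypothesis), but it leaves the central difficulty untouched and, more importantly, misallocates the role of the compact-range hypothesis. The set $\cT$ of stopping times carries no convex structure, and $\tau\mapsto\ex_{\qr}[Y_{\tau}]$ is not concave in any usable sense; every minimax theorem of K\"onig/Fan type requires a substitute for concavity on that side. In the paper this substitute is assumption {\rm (A)}: for $\tau_{1},\tau_{2}\in\cT_{f}\setminus\{0\}$ one realizes ``mixtures'' by the stopping times $\tau_{A}=\eins_{A}\tau_{1}+\eins_{\Omega\setminus A}\tau_{2}$ with $A\in\cF_{\tau_{1}\wedge\tau_{2}}$, and one must find $A$ such that $\rho_{\cQ}\bigl((\eins_{A}-1/2)(Y_{\tau_{2}}-Y_{\tau_{1}})\bigr)$ is small, i.e. $A$ behaves like a fair coin \emph{uniformly over} $\qr\in\cQ$. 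This is exactly where atomlessness enters (via the Lyapunov-type density theorem of Kingman--Robertson, giving a net $\eins_{A_{i}}\to 1/2$ in the weak* topology) and where the relative $\|\cdot\|_{\infty}$-compactness of the range of $\mu_{\cQ}$ is indispensable: it upgrades the weak* convergence, which is only pointwise in $\qr$, to convergence of $\ex_{\qr}[\eins_{A_{i}}X]$ uniformly in $\qr$ (Proposition \ref{Basiskriterium}). Your proposal instead spends the compactness on making $co(\cQ)$ (or its image) precompact; but precompactness of $\{(\ex_{\qr}[Y_{\tau}])_{\qr\in\cQ}\mid\tau\in\cT\}$ in $l^{\infty}(\cQ)$ does not supply the missing convexlike condition in $\tau$, and the compactness of the measure side that the abstract theorem needs is obtained in the paper by entirely different means (Banach--Alaoglu plus Daniell--Stone applied to the weak* closure $\overline{\cQ}$, Lemma \ref{compactness condition}), requiring only that $\rho_{\cQ}$ be continuous from above at $0$.

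Two further steps that your plan omits but that the paper cannot do without: (i) the compact range is also what yields continuity from above at $0$ of $\rho_{\cQ}$ (Lemma \ref{Stetigkeit von oben}), which is the hypothesis feeding the whole abstract machinery; and (ii) since the minimax is first established over the weak* closure $\overline{\cQ}\supseteq co(\cQ)$, one must pull $\inf_{\qr\in\overline{\cQ}}\sup_{\tau}$ back to $\inf_{\qr\in co(\cQ)}\sup_{\tau}$, which the paper does via Simons' lemma combined with the Eberlein--Smulian theorem and an optimal-stopping existence result (Lemma \ref{same optimal value bounded}). Acknowledging ``the central technical point'' without resolving it is precisely the gap: the translation you defer is not bookkeeping but the substance of Propositions \ref{Basiskriterium} and \ref{same dual optimal value}.
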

The proof of Theorem~\ref{secondMinimax} may be found in Section~\ref{proof of secondMinimax}.
\begin{remark}
\label{total variation}
Let $\cQ$ be relatively compact w.r.t. the topology of total variation, i.e. the topology with metric $d_{tv}$ defined by
$$
d_{tv}(\qr_{1},\qr_{2}) := \sup_{A\in\cF}|\qr_{1}(A) - \qr_{2}(A)|.
$$
Then it is already known that $\{\mu_{\cQ}(A)\mid A\in\cF\}$ is relatively $\|\cdot\|_{\infty}$-compact (cf. \cite{Amarante2014}). 
Moreover, if each member of $\cQ$ is equivalent to $\pr$, then the set $\cQ$ is \underline{not} time-consistent w.r.t. $\OFFP$ whenever it has more than one element,  $(\Omega,\cF_{t},\pr_{|\cF_{t}})$ is atomless and  
$L^{1}(\Omega,\cF_{t},\pr_{|\cF_{t}})$ is weakly separable for every $t > 0$. This will be shown in Section~\ref{compact-anti-zeitkonsistent}. 
The above conditions on the filtration 
$(\cF_t)_{0 \leq t \leq T}$ are always satisfied if it is assumed to be the standard augmentation of the natural filtration induced by some d-dimensional right-continuous stochastic process $Z = (Z_{t})_{0 \leq t \leq T}$ on the probability space $(\Omega,\cF,\pr)$ such that the marginals $Z_{t}$ have absolutely
continuous distributions for any $t > 0$, $Z_{0}$ is constant  $\pr$-a.s. and $\cF_0$ is trivial
(see \cite[Remark 2.3]{BelomestnyKraetschmer2017a}, or \cite[Remark 3]{BelomestnyKraetschmer2017b}).

\end{remark}

Let us now present a simple sufficient criterion guaranteeing the validity of the minimax relation \eqref{eq:minimax_rel}. It turns out that under these conditions $\cQ$ fails to be time-consistent.

\begin{theorem}
\label{point compact class}
Let 
the conditions \eqref{Integrierbarkeit} and \eqref{nonatomicdominated} be fulfilled and let $\sup_{\qr\in\cQ}\ex_{\qr}[Y^{*}\eins_{\{Y^{*} > a\}}]\to 0$ as $a\to\infty$. Furthermore, let $d$ denote a totally bounded semimetric on $\cQ$ and let $(d\qr/d\pr)_{\qr\in\cQ}$ have $\pr$-almost surely $d$-uniformly continuous paths. If $(d\qr/d\pr)_{\qr\in\cQ}$ is dominated by some $\pr$-integrable random variable, then  
$$
\sup_{\tau\in\cT}\, \inf_{\qr\in \cQ}\,\ex_{\qr}[Y_{\tau}] 
= \sup_{\tau\in\cT}\, \inf_{\qr\in co(\cQ)}\,\ex_{\qr}[Y_{\tau}] 
= \inf_{\qr\in co(\cQ)}\,\sup_{\tau\in\cT}\,\ex_{\qr}[Y_{\tau}].
$$
If in addition $L^{1}(\Omega,\cF_{t},\pr_{|\cF_{t}})$ is weakly separable for every $t > 0$, then $\cQ$ is not time-consistent w.r.t. $\OFFP$ whenever it consists of more than one element and all elements of \(\cQ\) are equivalent to $\pr$.
\end{theorem}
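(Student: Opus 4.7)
The plan is to derive both claims by reducing to Theorem~\ref{secondMinimax} and Remark~\ref{total variation} via the intermediate property that $\cQ$ is relatively compact in the total variation topology. Once that intermediate property is established, the first part of Remark~\ref{total variation} ensures that the range of $\mu_{\cQ}$ is relatively $\|\cdot\|_{\infty}$-compact, so Theorem~\ref{secondMinimax} yields the displayed chain of equalities; and the second part of the same remark yields the failure of time-consistency under the additional weak separability assumption, provided $\cQ$ has more than one element and all its elements are equivalent to $\pr$.

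The work is therefore concentrated in verifying relative $d_{tv}$-compactness of $\cQ$. Set $f_{\qr} := d\qr/d\pr$, and let $g\in L^{1}(\Omega,\cF,\pr)$ be the integrable dominant. Given an arbitrary sequence $(\qr_{n})_{n\in\N}$ in $\cQ$, total boundedness of $(\cQ,d)$ allows extraction of a $d$-Cauchy subsequence $(\qr_{n_{k}})_{k\in\N}$ by the standard finite $\varepsilon$-net argument. For every $\omega$ in the $\pr$-almost sure set on which $\qr\mapsto f_{\qr}(\omega)$ is $d$-uniformly continuous, the real sequence $(f_{\qr_{n_{k}}}(\omega))_{k\in\N}$ is Cauchy in $\R$, since uniform continuity transports Cauchy sequences to Cauchy sequences; hence it converges to some limit $f(\omega)$, which I extend arbitrarily on the complementary $\pr$-null set. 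Because $0\leq f_{\qr_{n_{k}}}\leq g$ $\pr$-a.s.\ for every $k$, dominated convergence delivers $\ex_{\pr}[|f_{\qr_{n_{k}}}-f|]\to 0$, whence in particular $\ex_{\pr}[f] = 1$, so $f$ is the $\pr$-density of a probability measure $\qr\ll\pr$ and $d_{tv}(\qr_{n_{k}},\qr) = \tfrac{1}{2}\,\ex_{\pr}[|f_{\qr_{n_{k}}}-f|]\to 0$. This shows that the $d_{tv}$-closure of $\cQ$ is sequentially compact in the metric space of probability measures absolutely continuous with respect to $\pr$, hence $\cQ$ is relatively $d_{tv}$-compact.

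The main delicate point is the passage from $d$-closeness of the measure-indices to $L^{1}(\pr)$-closeness of the densities: the $d$-uniform continuity of paths gives pointwise Cauchyness of densities along any $d$-Cauchy index sequence on a single $\pr$-null exceptional set that is independent of the chosen sequence, and the integrable dominant $g$ is precisely what upgrades this pointwise information to convergence in total variation via dominated convergence; without either of these two ingredients the argument would break down. With relative $d_{tv}$-compactness of $\cQ$ in hand, the rest reduces to quoting Theorem~\ref{secondMinimax} for the minimax identities and both halves of Remark~\ref{total variation} for the compactness of the range of $\mu_{\cQ}$ and for the failure of time-consistency under weak separability.
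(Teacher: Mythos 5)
Your proposal is correct and follows essentially the same route as the paper: both proofs reduce the theorem to establishing relative compactness of $\cQ$ in the total variation topology and then invoke Remark \ref{total variation} together with Theorem \ref{secondMinimax}. The only cosmetic difference is that the paper passes to the compact completion $(\check{\cQ},\check{d})$ and extracts a $\check{d}$-convergent subsequence, whereas you extract a $d$-Cauchy subsequence directly from total boundedness and use that uniform continuity of the paths carries Cauchy index sequences to Cauchy real sequences; the pointwise limit plus dominated convergence step is identical in both arguments.
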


The proof of Theorem \ref{point compact class} is delegated to Section~\ref{Theorem point compact class}.
\section{Applications   to parameterized families}
\label{applications to parameterized families}
Fix  a semimetric space $(\Theta,d_{\Theta})$ with finite diameter $\Delta$. Moreover, let us assume 
\begin{equation}
\label{parametrization}
\cQ = \{\qr_{\theta}\mid \theta\in\Theta\}
\quad\mbox{and}\quad\qr_{\theta}\not=\qr_{\vartheta}~\mbox{for}~\theta\not=\vartheta.
\end{equation}
Then $d_{\Theta}$ induces in a natural way a semimetric $d$ on $\cQ$ which is totally bounded if and only if $d_{\Theta}$ fulfills this property. We want to find conditions such that $\cQ$ meets the requirements of Theorem \ref{point compact class}. To this aim we shall consider a situation where the density processes corresponding to the probability measures from $\cQ$ are related to a \textit{nearly sub-Gaussian family  of  local martingales} $(X^\theta~\dot=~(X_t^\theta)_{0 \leq t \leq T},\, \theta\in\Theta)$, i.e. each $X^{\theta}$ is a centered local martingale for which we assume that there is some $C\geq 1$ such that  
$$
\sup_{t\in [0,T]}\ex\left[\exp\big(\lambda (X_t^\theta- X_t^\vartheta\big)\right]\leq 
C\cdot\exp\big(\lambda^{2}~d_{\Theta}(\theta,\vartheta)^{2}/2\big)\quad\mbox{for}~\theta, \vartheta\in\Theta\quad\mbox{and}\quad \lambda>0.
$$
Especially this means that for fixed $t\in [0,T]$, any process $(X^{\theta}_{t})_{\theta\in\Theta}$ is a {\it nearly sub-Gaussian random field} in the sense considered in the Appendix.
In the case of $C = 1$ we end up with  the notion of sub-Gaussian families of local martingales.
The following result requires $d_{\Theta}$ to be totally bounded, and it relies on metric entropies w.r.t. $d_{\Theta}$. These are the numbers $\{\ln(N(\Theta,d_{\Theta};\varepsilon))\mid\varepsilon > 0\}$, where $N(\Theta,d_{\Theta};\varepsilon)$ denotes the minimal number of $\varepsilon$-balls needed to cover $\Theta$ w.r.t. $d_{\Theta}$. In addition, we define 
\[
{\cal D}(\delta,d_{\Theta})~\dot=~\int_{0}^{\delta}\sqrt{\ln(N(\Theta,d_{\Theta};\varepsilon))}~d\varepsilon.
\] 
\begin{proposition}
\label{prop_suffLLN3}
Let $\cQ$ satisfy \eqref{parametrization}, let the conditions \eqref{Integrierbarkeit} and \eqref{nonatomicdominated} be fulfilled
as well as $\sup_{\qr\in\cQ}\ex_{\qr}[Y^{*}\eins_{\{Y^{*} > a\}}]\to 0$ for $a\to\infty$. 
Furthermore, let $d_{\Theta}$ be totally bounded and let there exist a nearly sub-Gaussian family of local martingales $(X^\theta = (X_t^\theta)_{0 \leq t \leq T},\,\theta\in\Theta)$ such that the quadratic variation process 
$([X^\theta]_t)_{\theta\in\Theta}$ has $d_{\Theta}$-uniformly continuous paths for every $t \in [0,T]$, and such that the density processes of the probability measures from $\cQ$  may be represented in the following way 
\begin{equation}
\label{representation of M}
\left.\frac{d\qr_{\theta}}{d\pr}\right |_{\cF_t} = \exp(X_{t}^\theta - [X^\theta]_t/2)~\quad\mbox{pointwise for}~t \in [0,T]~\mbox{and}~\theta\in\Theta.
\end{equation}
If $\sup_{t\in [0,T]}\ex[\exp(2 X_t^{\overline{\theta}})] < \infty$  for some $\overline{\theta}\in\Theta$, and if   
${\cal D}(\Delta,d_{\Theta}) < \infty$, then 
$$
\sup_{\tau\in\cT}\, \inf_{\qr\in\cQ}\,\ex_{\qr}[Y_{\tau}] 
= \sup_{\tau\in\cT}\, \inf_{\qr\in co(\cQ)}\,\ex_{\qr}[Y_{\tau}] 
= \inf_{\qr\in co(\cQ)}\,\sup_{\tau\in\cT}\,\ex_{\qr}[Y_{\tau}].
$$
Moreover, $\cQ$ is not time-consistent w.r.t. $\OFFP$ if each of its members is equivalent to $\pr$, $\cQ$ has more than one element and  $L^{1}(\Omega,\cF_{t},\pr_{|\cF_{t}})$ is weakly separable for every $t > 0$.
\end{proposition}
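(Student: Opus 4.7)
The strategy is to reduce everything to Theorem~\ref{point compact class}. The induced semimetric $d$ on $\cQ$ inherits total boundedness from $d_{\Theta}$, and the assumptions on $Y$, on the uniform integrability $\sup_{\qr\in\cQ}\ex_{\qr}[Y^{*}\eins_{\{Y^{*}>a\}}]\to 0$, and on atomlessness of the filtration are part of the hypotheses. Thus it is enough to verify that (a) the family $(d\qr_{\theta}/d\pr)_{\theta\in\Theta}$ has $\pr$-almost surely $d$-uniformly continuous sample paths in $\theta$, and (b) this family is dominated by some $\pr$-integrable random variable. Once (a) and (b) are in place, both the minimax equality and the failure of time-consistency (under the additional weak separability and equivalence conditions) follow by a direct application of Theorem~\ref{point compact class}.

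For step (a), I specialise the representation \eqref{representation of M} to $t=T$, so that $d\qr_{\theta}/d\pr=\exp(X_{T}^{\theta}-[X^{\theta}]_{T}/2)$. The map $\theta\mapsto[X^{\theta}]_{T}$ is $d_{\Theta}$-uniformly continuous $\pr$-a.s.\ by assumption. For the martingale part $\theta\mapsto X_{T}^{\theta}$ I would invoke the appendix on nearly sub-Gaussian random fields: the nearly sub-Gaussian increment bound, combined with ${\cal D}(\Delta,d_{\Theta})<\infty$, yields by a standard Dudley chaining argument $\pr$-a.s.\ $d_{\Theta}$-uniformly continuous versions and in particular $\pr$-a.s.\ boundedness of $(X_{T}^{\theta})_{\theta\in\Theta}$. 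Since $\exp$ is Lipschitz on every compact interval of $\R$, the composition $\theta\mapsto\exp(X_{T}^{\theta}-[X^{\theta}]_{T}/2)$ then inherits $d$-uniform continuity $\pr$-a.s., giving (a).

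For step (b), since $[X^{\theta}]_{T}\geq 0$ and $\exp$ is monotone, I get pointwise on $\Omega$ the bound
\[
\sup_{\theta\in\Theta}\frac{d\qr_{\theta}}{d\pr}\,\leq\,\exp\!\big(X_{T}^{\overline{\theta}}\big)\cdot\exp\!\Big(\sup_{\theta\in\Theta}\bigl(X_{T}^{\theta}-X_{T}^{\overline{\theta}}\bigr)\Big).
\]
Cauchy--Schwarz then splits the expectation into two factors: the first is controlled by the hypothesis $\sup_{t\in[0,T]}\ex[\exp(2X_{t}^{\overline{\theta}})]<\infty$, and the second reduces to an exponential moment of $\sup_{\theta}(X_{T}^{\theta}-X_{T}^{\overline{\theta}})$. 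The latter is finite because a nearly sub-Gaussian random field with finite Dudley integral admits sub-Gaussian tail bounds for its supremum; this is exactly the kind of statement the appendix on nearly sub-Gaussian random fields is designed to supply, and then all exponential moments of the supremum exist.

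The main obstacle is the second half of (b): converting the chaining estimate on $\ex[\sup_{\theta}(X_{T}^{\theta}-X_{T}^{\overline{\theta}})]$ into a sub-Gaussian concentration inequality strong enough to guarantee exponential integrability. The $d_{\Theta}$-uniform continuity of $[X^{\theta}]_{T}$ in (a) is assumed, so there is no new work there; the real quantitative input comes from the appendix. Once that tail bound is in hand, (a) and (b) are established, the hypotheses of Theorem~\ref{point compact class} are met, and the proposition follows.
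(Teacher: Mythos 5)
Your proposal is correct and follows essentially the same route as the paper: reduce to Theorem~\ref{point compact class}, obtain $d_{\Theta}$-uniform continuity and an integrable dominating variable from Proposition~\ref{continuity sub Gaussian} (separable version, a.s.\ bounded uniformly continuous paths, and all exponential moments of $U^{\overline{\theta}}=\sup_{\theta}|\widehat{X}^{\theta}-\widehat{X}^{\overline{\theta}}|$), and close the domination step with exactly the same Cauchy--Schwarz splitting against $\ex[\exp(2X^{\overline{\theta}}_T)]<\infty$. The only cosmetic differences are that the paper writes the continuity step via the explicit bound $|e^{a}-e^{b}|\leq(e^{a}+e^{b})|a-b|$ rather than local Lipschitzness of $\exp$, and works with a generic $t$ and the indicator $\eins_{A_t}$ to make precise that the separable version is still a valid density.
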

The proof of Proposition \ref{prop_suffLLN3} may be found in Section \ref{LLN3}.
\medskip

The following example describes a typical situation where Proposition \ref{prop_suffLLN3} may be applied directly, and in particular representation \eqref{representation of M} occurs. 

\begin{example}
\label{stochastic integrals}
Let $Z~\dot=~(Z_s)_{s\geq 0}$ be a Brownian motion on $\OFPT$ such that $(Z_{t})_{0 \leq t \leq T}$ is adapted to $(\cF_t)_{0 \leq t \leq T}$, and let $V~\dot=~(V_t)_{0 \leq t \leq T}$ be some $\R^d$-valued process (volatility) adapted  to $(\cF_t)_{0 \leq t \leq T}$.
Consider a class $\Psi$ of Borel-measurable functions $\psi$: $[0,T] \times \mathbb{R}^d \rightarrow \mathbb{R}$ such that for every $\psi \in \Psi$ it holds that
\begin{align}
\label{psi-Bedingung}
\sup_{x \in \mathbb{R}^d} \int_0^T \psi^2(u, x)~du < \infty.
\end{align}
Then
\begin{align*}
d_{\Psi}: \Psi \times \Psi \rightarrow \mathbb{R},~
(\psi, \phi) \mapsto \sup_{x \in \mathbb{R}^d} \sqrt{\int_0^T (\psi - \phi)^2(u,x)~du}
\end{align*}
is a well-defined semimetric on $\Psi$. 
Assume that, for each $\psi \in \Psi,$ the process 
$(\psi(t, V_t))_{0 \leq t \leq T}$ is progressively measurable. 
Then the family of processes
$(X^\psi,\, \psi \in \Psi)$ with
\begin{align*}
X_t^\psi~\dot=~\int_0^t \psi(u, V_u)~dZ_u \quad (t\in [0,T])
\end{align*}
is well-defined and the quadratic variation process of $X^\psi$ is given by
\begin{eqnarray*}
[X^\psi]_t = \int_0^t\psi^{2}(u,V_u)~du \quad (t \in [0,T]).
\end{eqnarray*}
So by assumption \eqref{psi-Bedingung}, each process $\big(\exp(X^{\psi}_{t} - [X^{\psi}]_{t}/2)\big)_{t\in [0,T]}$ satisfies Novikov's condition. In particular it is even a martingale and it is the densitiy process of a probability measure on $\cF$ which is absolutely continuous w.r.t. $\pr$.
\medskip

For arbitrary $\overline{\psi}, \psi, \phi\in\Psi$ we may observe by Cauchy-Schwarz inequality for every $t \in [0,T]$
\begin{eqnarray*}
|[X^{\psi}]_t - [X^{\phi}]_t|
&\leq& 
\left(d_{\Psi}(\psi,\overline{\psi}) + d_{\Psi}(\phi,\overline{\psi})\right)\cdot d_{\Psi}(\psi,\phi).
\end{eqnarray*}
Hence $([(X^\psi)]_t)_{\psi \in \Psi}$ has $d_{\Psi}$-Lipschitz continuous paths whenever $d_{\Psi}$ is totally bounded.
Moreover, it can be shown that $(X^\psi,\, \psi \in \Psi)$ is a nearly sub-Gaussian family of  local martingales w.r.t. $d_{\Psi}$ with $C = 2$. The proof of this result can be found in Section~\ref{proof of stochastic integrals}. 

\end{example}
\medskip

\section{Discussion}
\label{discussion}
Let us discuss some related results in the literature. In \cite{KaratzasKou1998} and \cite{KaratzasZamfirescu2005} the minimax relationship \eqref{eq: minimax_intr} is studied for general convex sets $\cQ$ of probability measures which are equivalent to $\pr$ without explicitly imposing stability under pasting or time-consistency. However, it is  implicitly assumed  there (see \cite[proof of Lemma B.1]{KaratzasKou1998} and \cite[proof of Proposition 3.1]{KaratzasZamfirescu2005}) that one can
find, for every $\tau\in\cT$ and any $\overline{\qr}\in\cQ,$ a sequence $(\qr^{k})_{k\in\N}$ of probability measures from $\cQ$ which agree with $\overline \qr$ on $\cF_{\tau}$ such that
$$
\esssup_{\sigma\in\cT,\sigma\geq \tau}~
\ex_{\qr^{k}}[Y_{\tau}|\cF_{\tau}] \xrightarrow[k\to\infty]{ }\essinf_{\qr\in\cQ}\esssup_{\sigma\in\cT, \sigma\geq \tau}\ex_\qr{[Y_{\sigma}|\cF_{\tau}]}\quad\pr\mbox{-a.s.}.
$$
It turns out that  only for time-consistent sets $\cQ$ the above relation can hold in general. 
\begin{proposition}
\label{bereits Zeitkonsistenz}
Let $\widehat{Q}$ denote the set of all probability measures on $\cF$ such that 
$\ex_{\qr}[X]\geq\inf_{\qr'\in\cQ}\ex_{\qr'}[X]$ holds for every $\pr$-essentially bounded random variable $X$. Furthermore let each member of $\cQ$ be equivalent to $\pr$, and define the set $\cS(\cQ)$ to consist of all uniformly bounded adapted càdlàg processes $Z = (Z_{t})_{0 \leq t \leq T}$ such that every of the single stopping problems
$$
\sup_{\tau\in\cT}\ex_{\qr}[Z_{\tau}]\quad (\qr\in\cQ)
$$
has a solution. Consider the following statements:
\begin{itemize}
\item [(1)] $\cQ$ is time-consistent.
\item [(2)] $\inf_{\qr\in\cQ}\ex_{\qr}[X]\leq \inf_{\qr\in\cQ}\ex_{\qr}\left[\essinf_{\qr\in\cQ}~\ex_{\qr}[X|\cF_{\tau}]\right]$ holds for every $\pr$-essentially bounded random variable $X$ and every stopping time $\tau\in\cT$.
\item [(3)] $\widehat{Q}^{e}~\dot=~\{\qr\in\widehat{\cQ}\mid \qr\approx\pr\}$ is stable under pasting, and 
$$
\essinf_{\qr\in\cQ}~\ex_{\qr}[X|\cF_{\tau}] = \essinf_{\qr\in\widehat{\cQ}^{e}}~\ex_{\qr}[X|\cF_{\tau}]\quad\mbox{for}~\pr\mbox{-essentially bounded}~X,\tau\in\cT.
$$
\item [(4)] For arbitrary process $Z = (Z_{t})_{0 \leq t \leq T}\in\cS(\cQ)$, and for any $\tau\in\cT$ as well as $\overline{\qr}\in\cQ$, there is some sequence $(\qr^{k})_{k\in\N}$ in $\cQ$ whose members agree with $\overline{\qr}$ on $\cF_{\tau}$ such that
$$
\esssup_{\sigma\in\cT,\sigma\geq\tau}~\ex_{\qr_{k}}[Z_{\sigma}|\cF_{\tau}] \xrightarrow[k\to\infty]{ } \essinf_{\qr\in\cQ}~\esssup_{\sigma\in\cT,\sigma\geq\tau}~\ex_{\qr}[Z_{\sigma}|\cF_{\tau}]\quad\pr\mbox{-a.s.}.
$$
\item [(5)] $\cQ$ is stable under pasting.
\end{itemize}
Then the statements $(1) - (3)$ are equivalent and $(4)$ follows from $(5)$. Moreover, the implication $(4)\Rightarrow (1)$ holds. 
\end{proposition}
The proof of Proposition \ref{bereits Zeitkonsistenz} is delegated to Subsection \ref{Zeitkonsistenz}.
\section{An abstract minimax result for lower Snell envelopes}
\label{general abstract minimax}
Let us define the set $\cX$ of all random variables $X$ on $(\Omega,\cF,\pr)$ satisfying
$$
|X|\leq C\big(Y^{*} + 1\big)~\pr\mbox{-a.s. for some}~C > 0.
$$
Note that $\cX$ is a Stonean vector lattice enclosing the set $\{Y_{\tau}\mid\tau\in\cT\}$ and the space $L^{\infty}(\Omega,\cF,\pr)$ of all $\pr$-essentially bounded random variables. Moreover, $\cX\subseteq L^{1}(\cQ)$ is valid under assumption \eqref{Integrierbarkeit}, and in this case we may introduce the following mapping
$$
\rho_{\cQ}:\cX\rightarrow\R,~X\mapsto\sup_{\qr\in\cQ}\ex_{\qr}[X].
$$
We shall call $\rho_{\cQ}$ to be {\it continuous from above at $0$} if $\rho_{\cQ}(X_{n})\searrow 0$ for $X_{n}\searrow 0$ $\pr$-a.s..

\medskip

In this section we want to present a general abstract minimax relation \eqref{eq: minimax_intr} which will be the starting point to derive the main result Theorem \ref{secondMinimax}.
It relies on the following key assumption.
\begin{enumerate}
\item [(A)] There exists some $\lambda\in ]0,1[$ such that for every $\tau_{1},\tau_{2}\in\cT_{f}\setminus\{0\}$ 
$$
\inf_{A\in\cF_{\tau_{1}\wedge\tau_{2}}}\rho_{\cQ}((\eins_{A} - \lambda) (Y_{\tau_{2}} - Y_{\tau_{1}}))\leq 0,
$$
\end{enumerate}
where $\cT_{f}$ denotes the set of stopping times from $\cT$ with finite range.
\begin{theorem}
\label{generalminimax}
If $Y = (Y_{t})_{0 \leq t \leq T}$ fulfills \eqref{Integrierbarkeit}, and if $\rho_{\cQ}$ continuous from above at $0$, then under assumption {\rm (A)}
$$
\sup_{\tau\in\cT}\, \inf_{\qr\in\cQ}\,\ex_{\qr}[Y_{\tau}] 
= 
\sup_{\tau\in\cT}\, \inf_{\qr\in co(\cQ)}\,\ex_{\qr}[Y_{\tau}] 
=
\inf_{\qr\in co(\cQ)}\,\sup_{\tau\in\cT}\,\ex_{\qr}[Y_{\tau}].
$$
\end{theorem}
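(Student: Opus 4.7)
The first equality is immediate: for any fixed $\tau\in\cT$ the mapping $\qr\mapsto\ex_{\qr}[Y_{\tau}]$ is affine on $co(\cQ)$, so its infimum over $co(\cQ)$ coincides with its infimum over $\cQ$. Only the second equality is substantial, and among its two inequalities only the direction
\[
\inf_{\qr\in co(\cQ)}\sup_{\tau\in\cT}\ex_{\qr}[Y_{\tau}]\le \sup_{\tau\in\cT}\inf_{\qr\in co(\cQ)}\ex_{\qr}[Y_{\tau}]
\]
is nontrivial, the reverse being the classical max--min inequality.

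\textbf{Step 1 -- Rewriting (A) as an approximate $\lambda$-concavelikeness of $\{Y_{\tau}\}$.} Given $\tau_{1},\tau_{2}\in\cT_{f}\setminus\{0\}$ and $A\in\cF_{\tau_{1}\wedge\tau_{2}}$, the pasted time $\tau_{3}:=\tau_{1}\eins_{A}+\tau_{2}\eins_{A^{c}}$ is again a member of $\cT_{f}\setminus\{0\}$, and a one-line computation yields
\[
\lambda Y_{\tau_{1}}+(1-\lambda)Y_{\tau_{2}}-Y_{\tau_{3}}=(\eins_{A}-\lambda)(Y_{\tau_{2}}-Y_{\tau_{1}}).
\]
Hence (A) is equivalent to the following: for every $\tau_{1},\tau_{2}\in\cT_{f}\setminus\{0\}$ and $\varepsilon>0$ one can find $\tau_{3}\in\cT_{f}\setminus\{0\}$ with $\ex_{\qr}[Y_{\tau_{3}}]\ge\lambda\ex_{\qr}[Y_{\tau_{1}}]+(1-\lambda)\ex_{\qr}[Y_{\tau_{2}}]-\varepsilon$ for every $\qr\in\cQ$. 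Iterating this pairing yields analogous bounds for coefficients of the form $\lambda^{k}(1-\lambda)^{\ell}$, a subset of $[0,1]$ dense enough to furnish an approximate concavelikeness for essentially arbitrary convex weights.

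\textbf{Step 2 -- Reduction to $\cT_{f}\setminus\{0\}$ and abstract minimax.} For a general $\tau\in\cT$ the dyadic approximation $\tau_{n}:=(\lceil 2^{n}\tau\rceil/2^{n})\wedge T$ lies in $\cT_{f}\setminus\{0\}$ (with the convention $\tau_{n}\equiv 1/n$ when $\tau\equiv 0$) and $\tau_{n}\searrow\tau$; right-continuity of $Y$, the bound $|Y_{\tau_{n}}|\le Y^{*}$, and \eqref{Integrierbarkeit} then deliver $\ex_{\qr}[Y_{\tau_{n}}]\to\ex_{\qr}[Y_{\tau}]$ for every $\qr\in\cQ$ by dominated convergence, so all three expressions in the theorem are unchanged when $\sup_{\tau}$ is restricted to $\cT_{f}\setminus\{0\}$. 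It then remains to apply a Simons-/Fan-type minimax theorem to $f(\tau,\qr)=\ex_{\qr}[Y_{\tau}]$ on $(\cT_{f}\setminus\{0\})\times co(\cQ)$: on the $\qr$-side $f$ is affine on the convex set $co(\cQ)$, hence convexlike, and on the $\tau$-side $f$ is approximately concavelike by Step~1. The hard part will be that $co(\cQ)$ carries no natural topology making it compact with $\qr\mapsto\sup_{\tau}\ex_{\qr}[Y_{\tau}]$ lower semicontinuous, so Sion-type theorems cannot be invoked directly. The role of the hypothesis that $\rho_{\cQ}$ is continuous from above at $0$ is precisely to replace this missing topological compactness: it furnishes a Fatou-type property on $\cX$ which, combined with the iterated approximate concavelikeness from Step~1, provides exactly the countable control on $\sup_{\tau}$-values along convex combinations of measures in $\cQ$ needed to drive Simons' minimax argument and conclude that the two sides of the displayed inequality agree.
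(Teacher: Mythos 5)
Your Step 1 is correct and matches the paper's use of assumption (A): the identity $\lambda Y_{\tau_1}+(1-\lambda)Y_{\tau_2}-Y_{\tau_A}=(\eins_A-\lambda)(Y_{\tau_2}-Y_{\tau_1})$ with $\tau_A=\eins_A\tau_1+\eins_{\Omega\setminus A}\tau_2$ is exactly how the paper converts (A) into the interpolation hypothesis of K\"onig's minimax theorem, and the dyadic reduction to $\cT_f\setminus\{0\}$ in your Step 2 is Lemma \ref{finite range stopping times}. The genuine gap is that your Step 2 stops precisely where the proof begins: you correctly observe that $co(\cQ)$ carries no topology making it compact with $\qr\mapsto\sup_\tau\ex_\qr[Y_\tau]$ lower semicontinuous, and then assert, without argument, that continuity from above of $\rho_\cQ$ ``provides exactly the countable control needed to drive Simons' minimax argument and conclude.'' That assertion is the theorem. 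No Simons-/Fan-type theorem applies directly to $(\cT_f\setminus\{0\})\times co(\cQ)$, and approximate concavelikeness in $\tau$ plus affinity in $\qr$ do not by themselves yield the minimax identity on a non-compact domain.

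The paper closes this gap in two separate moves, neither of which appears in your proposal. First, it enlarges $co(\cQ)$ to $\overline{\cQ}=\{\qr\in\cM_1(\Omega,\cX)\mid \ex_\qr[X]\le\rho_\cQ(X)\ \forall X\in\cX\}$ and proves (Lemma \ref{compactness condition}) that $\overline{\cQ}$ is compact for $\sigma(\overline{\cQ},\cX)$ --- here continuity from above of $\rho_\cQ$ is what lets the Daniell--Stone theorem represent the elements of the Banach--Alaoglu-compact dual set by countably additive measures; K\"onig's theorem is then applied on $\overline{\cQ}\times(\cT_f\setminus\{0\})$, giving the minimax identity over $\overline{\cQ}$, not over $co(\cQ)$. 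Second, and this is where Simons' lemma actually enters (not in the minimax step itself), one must show $\inf_{\qr\in\overline{\cQ}}\sup_\tau\ex_\qr[Y_\tau]=\inf_{\qr\in co(\cQ)}\sup_\tau\ex_\qr[Y_\tau]$: the paper truncates $Y$ to $\oYk$, uses the Eberlein--\v{S}mulian theorem to approximate a fixed $\qr_0\in\overline{\cQ}$ weakly in $L^1$ by a sequence $(\qr_n)$ from $co(\cQ)$, perturbs to $\qr_n^\lambda=\lambda\qr_n+(1-\lambda)\pr$ so that the optimal stopping problems admit maximizers (via \cite[Proposition B.6]{KobylanskiQuenez2012}, using quasi left-uppersemicontinuity --- this is the hypothesis on countable convex combinations that Simons' lemma requires), and only then applies Simons' lemma to $f_{n,\lambda}(\tau)=\ex_{\qr_n^\lambda}[\oYk_\tau+k]$. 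Your proposal contains neither the compactification, nor the existence-of-optimal-stopping-times step, nor the transfer from $\overline{\cQ}$ back to $co(\cQ)$, so the central inequality $\inf_{\qr\in co(\cQ)}\sup_\tau\le\sup_\tau\inf_{\qr\in co(\cQ)}$ remains unproved.
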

The proof of Theorem \ref{generalminimax} is delegated to Section \ref{proof of generalminimax}. 
\medskip

At this place we may invoke the assumption of Theorem \ref{secondMinimax} that the range of the vector measure $\mu_{\cQ}$ associated with $\cQ$ should be relatively compact w.r.t. the sup-norm $\|\cdot\|_{\infty}$. As the following result shows this condition essentially implies assumption {\rm (A)}.
\begin{proposition}
\label{Basiskriterium}
Let $Y = (Y_{t})_{0 \leq t \leq T}$ satisfy \eqref{Integrierbarkeit}, and let $\rho_{\cQ}$ be continuous from above at $0$. Suppose furthermore that 
$\{\mu_{\cQ}(A)\mid A\in\cF\}$ is relatively $\|\cdot\|_{\infty}$-compact. If for $\tau_{1},\tau_{2}\in\cT_f\setminus\{0\}$ the probability space $(\Omega,\cF_{\tau_{1}\wedge\tau_{2}},\pr_{|\cF_{\tau_{1}\wedge\tau_{2}}})$ is atomless, then
$$
\inf_{A\in\cF_{\tau_{1}\wedge\tau_{2}}}\rho_{\cQ}((\eins_{A} - 1/2) (Y_{\tau_{2}} - Y_{\tau_{1}}))\leq 0.
$$
\end{proposition}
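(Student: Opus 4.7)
The plan is to cast the statement as an instance of a Liapounov-type theorem for the $l^{\infty}(\cQ)$-valued vector measure on $\cF_{\tau_{1}\wedge\tau_{2}}$ induced by $Y_{\tau_{2}}-Y_{\tau_{1}}$. Set $\sigma := \tau_{1}\wedge\tau_{2}$ and $Z := Y_{\tau_{2}}-Y_{\tau_{1}}$, so that $|Z|\leq 2Y^{*}\in\cL^{1}(\cQ)$, and introduce
$$
\widetilde\mu: \cF_{\sigma}\longrightarrow l^{\infty}(\cQ),\qquad \widetilde\mu(A)(\qr) := \ex_{\qr}[\eins_{A}Z].
$$
If the $\|\cdot\|_{\infty}$-closure of the range $\{\widetilde\mu(A) : A\in\cF_{\sigma}\}$ turns out to be convex, then, because $\widetilde\mu(\emptyset)=0$ and $\widetilde\mu(\Omega)$ both lie in the range, $\tfrac{1}{2}\widetilde\mu(\Omega)$ belongs to the closure. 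Any approximating sets $A_{n}\in\cF_{\sigma}$ with $\|\widetilde\mu(A_{n})-\tfrac{1}{2}\widetilde\mu(\Omega)\|_{\infty}\to 0$ then satisfy
$$
\rho_{\cQ}\bigl((\eins_{A_{n}}-1/2)Z\bigr) \;=\; \sup_{\qr\in\cQ}\Bigl(\ex_{\qr}[\eins_{A_{n}}Z]-\tfrac{1}{2}\ex_{\qr}[Z]\Bigr)\;\longrightarrow\;0,
$$
yielding the claimed inequality.

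To obtain the convexity of the closure, I would invoke an infinite-dimensional Liapounov theorem (for instance Uhl's version) for countably additive, non-atomic vector measures with relatively compact range. Countable additivity of $\widetilde\mu$ in $\|\cdot\|_{\infty}$ reduces to $\|\widetilde\mu(\bigcup_{k>N}A_{k})\|_{\infty}\leq\rho_{\cQ}(\eins_{\bigcup_{k>N}A_{k}}|Z|)\to 0$, and the envelope on the right decreases $\pr$-a.s.\ to $0$, so continuity of $\rho_{\cQ}$ from above at $0$ closes this step. For non-atomicity (strong continuity), it is enough to produce, given $\varepsilon>0$, some $\delta>0$ with $\pr(B)<\delta\Rightarrow\|\widetilde\mu(B)\|_{\infty}<\varepsilon$: the tail bound $\rho_{\cQ}(Y^{*}\eins_{\{Y^{*}>K\}})\to 0$ (again a consequence of continuity from above) together with uniform $\pr$-absolute continuity of $\cQ$ (itself a standard corollary of continuity from above, via the contradiction argument using $C_{n}=\bigcup_{k\geq n}B_{k}$) combines to produce such a $\delta$, and atomlessness of $\pr_{|\cF_{\sigma}}$ then allows one to partition any $A\in\cF_{\sigma}$ into finitely many pieces of $\pr$-measure below $\delta$.

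The main obstacle is the remaining ingredient: that $\widetilde\mu$ inherits relative $\|\cdot\|_{\infty}$-compactness of its range from the hypothesis on $\mu_{\cQ}$. I would address this by a two-stage approximation of $Z$ in the seminorm $\rho_{\cQ}(|\cdot|)$. First truncate to $Z_{K}:=Z\eins_{\{|Z|\leq K\}}$, whose error is dominated by $2Y^{*}\eins_{\{Y^{*}>K/2\}}$ and hence small in $\rho_{\cQ}$ for $K$ large; then approximate $Z_{K}$ uniformly by simple $\cF$-measurable functions $Z_{K}^{n}=\sum_{j=1}^{N_{n}}c_{j}\eins_{B_{j}}$ attached to a finite $\cF$-partition. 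For such a simple approximant, the map $A\mapsto(\ex_{\qr}[\eins_{A}Z_{K}^{n}])_{\qr\in\cQ}=\sum_{j}c_{j}\,\mu_{\cQ}(A\cap B_{j})$ is a continuous linear image of $A\mapsto(\mu_{\cQ}(A\cap B_{j}))_{j=1}^{N_{n}}$, and the latter takes values in a finite product of subsets of the relatively compact range of $\mu_{\cQ}$, hence is itself relatively compact. Since $\widetilde\mu$ now arises as a $\|\cdot\|_{\infty}$-uniform (in $A\in\cF_{\sigma}$) limit of such maps, its range is totally bounded and therefore relatively compact in $l^{\infty}(\cQ)$. With all three properties verified, Uhl's theorem supplies the convexity of the closure of the range, and the strategy from the first paragraph completes the proof.
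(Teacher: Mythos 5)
Your argument is correct in substance but follows a genuinely different route from the paper's. The paper never invokes a general infinite-dimensional Liapounov theorem: after the same truncation and uniform simple-function approximation that you employ (you use it only to establish relative compactness of the range of your $\widetilde\mu$; the paper uses it to reduce the whole problem to a simple integrand $X=\sum_{i}\lambda_{i}\eins_{B_{i}}$), it produces the desired set directly. It takes from Kingman--Robertson the fact that on an atomless space the indicators $\{\eins_{A}\mid A\in\cF_{\tau_{1}\wedge\tau_{2}}\}$ are weak-$*$ dense in the order interval $[0,1]$ of $L^{\infty}$, picks a net $\eins_{A_{i}}\to 1/2$, and then uses the relative $\|\cdot\|_{\infty}$-compactness of $\{(\ex_{\qr}[\eins_{A}X])_{\qr}\mid A\in\cF\}$ to upgrade the resulting pointwise-in-$\qr$ convergence $\ex_{\qr}[\eins_{A_{i}}X]\to\ex_{\qr}[X/2]$ to convergence uniform over $\qr$ along a subnet; a three-$\varepsilon$ estimate then closes the proof. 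Your route instead verifies that $\widetilde\mu$ is countably additive, non-atomic and has relatively compact range, and cites a Liapounov convexity theorem; all of these verifications are sound (countable additivity and the $\varepsilon$--$\delta$ absolute continuity both come from continuity of $\rho_{\cQ}$ from above at $0$, and total boundedness of the range from uniform approximation through simple integrands, exactly as in the paper's derivation of its compactness claim). The one point you must tighten is the citation itself: Uhl's theorem in its standard form assumes the target space has the Radon--Nikodym property and the measure has bounded variation, and neither holds here --- $l^{\infty}(\cQ)$ generally fails RNP, and $\widetilde\mu$ need not have bounded variation. What you need is the variant for measures with relatively norm-compact range; this is true and, with the ingredients you have already assembled, admits a short direct proof: each coordinate measure $A\mapsto\ex_{\qr}[\eins_{A}Z]$ is absolutely continuous with respect to the atomless $\pr_{|\cF_{\tau_{1}\wedge\tau_{2}}}$ and hence non-atomic, so the classical finite-dimensional Liapounov theorem applied to finitely many coordinates places $\tfrac{1}{2}\widetilde\mu(\Omega)$ in the weak closure of the range, and on a relatively norm-compact set the weak and norm closures coincide. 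With that substitution (or an explicit reference to the compact-range version of the theorem), your proof is complete. Your approach buys a clean conceptual identification of the proposition as a Liapounov-type statement about the vector measure induced by $Y_{\tau_{2}}-Y_{\tau_{1}}$; the paper's buys self-containedness, resting only on the Kingman--Robertson density lemma and a subnet extraction.
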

The proof of Proposition \ref{Basiskriterium} may be found in Section \ref{proof Basiskriterium}.
\section{Proofs}
\label{proofs}
Let \eqref{Integrierbarkeit} be fulfilled. Note that under \eqref{Integrierbarkeit}
\begin{equation}
\label{Wohldefiniertheit}
Y_{\tau}\in L^{1}(\cQ)\quad\mbox{for}~\tau\in\cT. 
\end{equation}
Condition \eqref{Wohldefiniertheit} implies that for any $\qr\in co(\cQ)$ its Radon-Nikodym derivative $\frac{d\qr}{d\pr}$ satisfies
\begin{equation}
\label{integrierbare Ableitungen}
Y_{\tau}~\frac{d\qr}{d\pr}~\mbox{is}~\pr\mbox{-integrable for every}~\tau\in\cT.
\end{equation}
Let the set $\cX$ and the mapping $\rho_{\cQ}$ be defined as at the beginning of section \ref{general abstract minimax}.
\subsection{A topological closure of {\bf $\cQ$}}
\label{spezielle topological closure}
Let $\cM_{1}(\Omega,\cX)$ denote the set of probability measures $\qr$ on $\cF$ such that $X$ is $\qr$-integrable for every $X\in \cX$. Set 
$$
\overline{\cQ}~\dot=~\big\{\qr\in\cM_{1}(\Omega,\cX)\mid \sup_{X\in \cX}\big(\ex_{\qr}[X] - \rho_{\cQ}(X)\big)\leq 0\big\}.
$$
Obviously $co(\cQ)\subseteq\overline{\cQ}$, and
\begin{equation}
\label{Vorbereitungen}
\overline{\cQ}~\mbox{is convex}\quad\mbox{and}\quad\sup_{\qr\in\overline{\cQ}}\ex_{\qr}[X] = \rho_{\cQ}(X)~\mbox{for all}~X\in \cX.
\end{equation}
We endow $\overline{\cQ}$ with the coarsest topology $\sigma(\overline{\cQ},\cX)$ such that the mappings
$$
\varphi_{X}:\overline{\cQ}\rightarrow\R,~\qr\mapsto\ex_{\qr}[X]\quad(X\in \cX)
$$
are continuous. In the next step we are going to investigate when $\sigma(\overline{\cQ},\cX)$ is compact with $co(\cQ)$ being a dense subset.
\begin{lemma}
\label{compactness condition}
If \eqref{Integrierbarkeit} holds, and if $\rho_{\cQ}$ is continuous from above at $0$, then $\sigma(\overline{\cQ},\cX)$ is compact. Moreover, 
$co(\cQ)$ is a $\sigma(\overline{\cQ},\cX)$-dense subset of $\overline{\cQ}$, and $\overline{\cQ}$ is dominated by $\pr$.
\end{lemma}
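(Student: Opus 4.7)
The plan is to realize $\overline{\cQ}$ as a subset of $\R^{\cX}$ via the embedding $\iota: \qr \mapsto (\ex_{\qr}[X])_{X \in \cX}$, with $\R^{\cX}$ carrying the product topology. Then $\sigma(\overline{\cQ},\cX)$ is exactly the pullback of this product topology, so compactness will follow once $\iota(\overline{\cQ})$ is shown to sit inside a product of compact intervals and to be closed. For the first part, one uses that $|X| \leq C(Y^{*}+1)$ for $X \in \cX$ together with \eqref{Integrierbarkeit} to get $\rho_{\cQ}(X), \rho_{\cQ}(-X) < \infty$; since every $\mu \in \overline{\cQ}$ satisfies $-\rho_{\cQ}(-X) \leq \ex_{\mu}[X] \leq \rho_{\cQ}(X)$, the image $\iota(\overline{\cQ})$ lies in $\prod_{X \in \cX}[-\rho_{\cQ}(-X),\rho_{\cQ}(X)]$, which is compact by Tychonoff's theorem.

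The closedness step is the main technical hurdle. Given a net $(\qr_{\alpha})$ in $\overline{\cQ}$ with $\ex_{\qr_{\alpha}}[X] \to \ell(X)$ pointwise on $\cX$, the limit $\ell$ inherits linearity, positivity, and $\ell(\eins_{\Omega}) = 1$. The decisive input is that for any $X_{n} \in \cX$ with $X_{n} \downarrow 0$ $\pr$-a.s. one has
$$
0 \leq \ell(X_{n}) \leq \rho_{\cQ}(X_{n}) \to 0
$$
by the assumed continuity from above at $0$ of $\rho_{\cQ}$. Since $\cX$ is a Stonean vector lattice containing the constants and all indicators $\eins_{A}$ with $A \in \cF$, the Daniell--Stone representation theorem then produces a unique probability measure $\qr_{\infty}$ on $\cF$ with $\cX \subseteq \cL^{1}(\qr_{\infty})$ and $\ex_{\qr_{\infty}}[X] = \ell(X)$ for every $X \in \cX$. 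The inequality $\ex_{\qr_{\infty}}[X] = \lim_{\alpha} \ex_{\qr_{\alpha}}[X] \leq \rho_{\cQ}(X)$ then places $\qr_{\infty}$ inside $\overline{\cQ}$, giving closedness.

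For denseness of $co(\cQ)$ I would argue by Hahn--Banach separation in the locally convex space $\R^{\cX}$. Its continuous linear functionals are finite linear combinations of coordinate projections, so every such functional on $\iota(\overline{\cQ})$ corresponds, after passing through linearity of $\ex_{\qr}[\cdot]$, to evaluation at a single $X \in \cX$. If some $\qr_{0} \in \overline{\cQ}$ lay outside the $\sigma(\overline{\cQ},\cX)$-closure of $co(\cQ)$, separation would furnish $X \in \cX$ with
$$
\ex_{\qr_{0}}[X] > \sup_{\qr \in co(\cQ)} \ex_{\qr}[X] = \sup_{\qr \in \cQ} \ex_{\qr}[X] = \rho_{\cQ}(X),
$$
contradicting $\qr_{0} \in \overline{\cQ}$. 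Finally, domination by $\pr$ is immediate: if $\pr(A) = 0$, then $\rho_{\cQ}(\eins_{A}) = \sup_{\qr \in \cQ} \qr(A) = 0$ by the standing absolute continuity of each $\qr \in \cQ$, and so $\mu(A) = \ex_{\mu}[\eins_{A}] \leq \rho_{\cQ}(\eins_{A}) = 0$ for every $\mu \in \overline{\cQ}$. The one delicate point worth checking carefully in the writeup is the Daniell--Stone extension: one must verify that every $X \in \cX$, not merely the bounded ones, is $\qr_{\infty}$-integrable and satisfies $\ex_{\qr_{\infty}}[X] = \ell(X)$, which is ensured by the monotone convergence afforded by the above continuity-from-above bound.
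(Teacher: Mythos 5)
Your proposal is correct and follows essentially the same route as the paper's proof: compactness via embedding into a product of intervals bounded by $\rho_{\cQ}$ (you prove this directly with Tychonoff and a closedness argument, where the paper cites K\"onig's version of the Banach--Alaoglu theorem for sublinear functionals), the Daniell--Stone representation of limit functionals enabled by continuity from above at $0$, and denseness of $co(\cQ)$ via separation (the paper cites the Bipolar theorem, of which your Hahn--Banach argument is the standard proof). The one point you flag --- integrability of unbounded $X\in\cX$ under the represented measure --- is handled in the paper exactly as you suggest, through the Daniell--Stone theorem applied to the Stonean vector lattice $\cX$.
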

\begin{proof}
Let us equip the algebraic dual $\cX^{*}$ of $\cX$ with the coarsest topology $\sigma (\cX^{*},\cX)$ such that the mappings
$$
h_{X}: \cX^{*}\rightarrow\R,~\Lambda\mapsto\Lambda(X)\quad(X\in \cX)
$$
are continuous. The functional $\rho_{\cQ}$ is sublinear. Then by a version of the Banach Alaoglu theorem (cf. \cite[Theorem 1.6]{Koenig2001}) the set
\begin{eqnarray*}
\Delta_{\cQ} 
~\dot=~
\big\{\Lambda\in L^{1}(\cQ)^{*}\mid \sup_{X\in L^{1}(\cQ)}(\Lambda(X) - \rho_{\cQ}(X))\leq 0\big\} 
\end{eqnarray*}
is compact w.r.t. $\sigma(\cX^{*},\cX)$. Moreover, $\rho_{\cQ}$ is assumed to be continuous from above at $0$. This implies that every $\Lambda\in\Delta_{\cQ}$ satisfies $\Lambda(X_{n})\searrow 0$ whenever $X_{n}\searrow 0$. Since $\cX$ is a Stonean vector lattice containing the constant mappings on $\Omega$, it generates the $\sigma$-algebra $\cF$, and the application of the Daniell-Stone representation theorem yields that each $\Lambda\in cl(\{\Lambda_{\qr}\mid \qr\in\cQ\})$ is uniquely representable by a probability measure $\qr_{\Lambda}$, namely
$$
\qr_{\Lambda}:\cF\rightarrow [0,1],~A\mapsto\Lambda(\eins_{A}).
$$
Hence by definition of $\overline{\cQ}$ we obtain
\begin{equation}
\label{DaniellStoneRepresentation}
\Delta_{\cQ} = \{\Lambda_{\qr}\mid \qr\in\overline{\cQ}\},\quad\mbox{and}\quad\Lambda_{\qr}\not=\Lambda_{\widetilde{\qr}}~\mbox{for}~\qr\not=\widetilde{\qr},
\end{equation}
where 
$$
\Lambda_{\qr}: \cX\rightarrow\R,~X\mapsto \ex_{\qr}[X]\quad\mbox{for}~\qr\in\overline{\cQ}.
$$
Obviously we may define a homeomorphism from $\Delta_{\cQ}$ onto 
$\overline{\cQ}$ w.r.t. the topologies $\sigma(\cX^{*},\cX)$ and 
$\sigma(\overline{\cQ},\cX)$. In particular, $\overline{\cQ}$ is compact w.r.t. $\sigma(\cX^{*},\cX)$.

\medskip

Next, $\{\Lambda_{\qr}\mid \qr\in co(\cQ)\}$
is a convex subset of $\cX^{*}$. We may draw on a version of the Bipolar theorem 
(cf. \cite[Consequence 1.5]{Koenig2001}) to observe that the $\sigma(\cX^{*},\cX)$-closure $cl(\{\Lambda_{\qr}\mid \qr\in co(\cQ)\})$ of $\{\Lambda_{\qr}\mid \qr\in co(\cQ)\}$ coincides with $\Delta_{\cQ}$. Therefore \eqref{DaniellStoneRepresentation} enables us to define a homeomorphism from $cl(\{\Lambda_{\qr}\mid \qr\in co(\cQ)\})$ onto 
$\overline{\cQ}$ w.r.t. the topologies $\sigma(\cX^{*},\cX)$ and 
$\sigma(\overline{\cQ},\cX)$. Thus $co(\cQ)$ is a $\sigma(\cX^{*},\cX)$-dense subset of $\overline{\cQ}$, and by definition of the topology $\sigma(\overline{\cQ},\cX)$ it may be verified easily that $\overline{\cQ}$ is dominated by $\pr$.  This completes the proof.
\end{proof}
Consider now the following new optimization problems
\begin{equation}
\label{stoppproblemneu}
\mbox{maximize}~\inf_{\qr\in\overline{\cQ}}\,\ex_{\qr}[Y_{\tau}]\quad\mbox{over}~\tau\in\cT,
\end{equation}
and
\begin{equation}
\label{stoppproblemdualneu}
\mbox{minimize}~\sup_{\tau\in\cT}\,\ex_{\qr}[Y_{\tau}]\quad\mbox{over}~\qr\in\overline{\cQ}.
\end{equation}
In view of \eqref{Vorbereitungen} we obtain that 
\eqref{stoppproblemneu} has the same optimal value as the corresponding one w.r.t. $\cQ$ and $co(\cQ)$.
\begin{proposition}
\label{same optimal value}
Under the assumption \eqref{Integrierbarkeit} we have
$$
\sup_{\tau\in\cT}\inf_{\qr\in\overline{\cQ}}\,\ex_{\qr}[Y_{\tau}] 
= 
\sup_{\tau\in\cT}\inf_{\qr\in\cQ}\,\ex_{\qr}[Y_{\tau}]
= 
\sup_{\tau\in\cT}\inf_{\qr\in co(\cQ)}\,\ex_{\qr}[Y_{\tau}].
$$
\end{proposition}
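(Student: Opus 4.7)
The plan is to establish the triple equality by first recording a trivial chain of inequalities coming from the inclusions $\cQ\subseteq co(\cQ)\subseteq\overline{\cQ}$, and then using the right-hand identity in \eqref{Vorbereitungen} to collapse the chain.

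First I would note that since $\cQ\subseteq co(\cQ)\subseteq\overline{\cQ}$, enlarging the indexing set can only decrease an infimum. Hence for every fixed $\tau\in\cT$
$$
\inf_{\qr\in\overline{\cQ}}\ex_{\qr}[Y_{\tau}]\;\leq\;\inf_{\qr\in co(\cQ)}\ex_{\qr}[Y_{\tau}]\;\leq\;\inf_{\qr\in\cQ}\ex_{\qr}[Y_{\tau}],
$$
and taking $\sup_{\tau\in\cT}$ preserves this ordering, giving one of the two inequalities needed in the claimed string of equalities.

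For the reverse direction I would use \eqref{Vorbereitungen}. Fix $\tau\in\cT$. By definition of $\cX$ and assumption \eqref{Integrierbarkeit}, the random variable $-Y_{\tau}$ lies in $\cX$, since $|-Y_{\tau}|\leq Y^{*}\leq 1\cdot(Y^{*}+1)$. Applying \eqref{Vorbereitungen} with $X:=-Y_{\tau}$ (and recalling that $\rho_{\cQ}(X)=\sup_{\qr\in\cQ}\ex_{\qr}[X]$) yields
$$
\sup_{\qr\in\overline{\cQ}}\ex_{\qr}[-Y_{\tau}]\;=\;\rho_{\cQ}(-Y_{\tau})\;=\;\sup_{\qr\in\cQ}\ex_{\qr}[-Y_{\tau}],
$$
which, after multiplying by $-1$, becomes
$$
\inf_{\qr\in\overline{\cQ}}\ex_{\qr}[Y_{\tau}]\;=\;\inf_{\qr\in\cQ}\ex_{\qr}[Y_{\tau}].
$$
Taking $\sup_{\tau\in\cT}$ of both sides now forces the leftmost and rightmost quantities in the proposition to coincide, and the sandwich from the previous paragraph then pins the middle term involving $co(\cQ)$ to the same value.

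I do not foresee any real obstacle: the argument is essentially a one-line bookkeeping exercise whose content is the identity $\sup_{\overline{\cQ}}\ex_{\qr}[X]=\sup_{\cQ}\ex_{\qr}[X]$ for $X\in\cX$. The only small point that needs explicit verification is the membership $-Y_{\tau}\in\cX$, which is immediate from the definition of $\cX$ together with \eqref{Integrierbarkeit}; in particular no appeal to Lemma~\ref{compactness condition} or to compactness of $\sigma(\overline{\cQ},\cX)$ is required here.
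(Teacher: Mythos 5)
Your proof is correct and follows essentially the same route as the paper, which justifies Proposition \ref{same optimal value} precisely by invoking \eqref{Vorbereitungen} (applied to $\pm Y_{\tau}\in\cX$) together with the inclusions $\cQ\subseteq co(\cQ)\subseteq\overline{\cQ}$. You are also right that no compactness or continuity-from-above hypothesis is needed here, since \eqref{Vorbereitungen} is a direct consequence of the definition of $\overline{\cQ}$ under \eqref{Integrierbarkeit} alone.
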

The comparison of the optimal value 
of problem \eqref{stoppproblemdualneu} with the corresponding one w.r.t. $co(\cQ)$ is more difficult to handle. For preparation let us introduce a sequence 
$(\oYk)_{k\in\N}$ of stochastic processes $\oYk~\dot=~(\oYk_{t})_{0 \leq t \leq T}$ via $\oYk_{t}~\dot=~(Y_{t}\wedge k)\vee (-k)$. They all are adapted to $(\cF_t)_{0 \leq t \leq T}$.
\begin{lemma}
\label{boundedApproximation}
If \eqref{Integrierbarkeit} is satisfied, and if $\rho_{\cQ}$ is continuous from above at $0$, then
$$
\lim_{k\to\infty}\sup_{\qr\in\overline{\cQ}}\big|\sup_{\tau\in\cT}\ex_{\qr}[Y_{\tau}] - \sup_{\tau\in\cT}\ex_{\qr}[\oYk_{\tau}]\big| = 0.
$$
\end{lemma}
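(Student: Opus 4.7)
The strategy is straightforward once one recognises that the truncation error can be controlled pathwise by $Y^{*}\eins_{\{Y^{*}>k\}}$, and then passed from individual $\qr\in\overline{\cQ}$ to the uniform bound via the identity $\sup_{\qr\in\overline{\cQ}}\ex_{\qr}[X]=\rho_{\cQ}(X)$ from \eqref{Vorbereitungen}. The final ingredient is the continuity from above of $\rho_{\cQ}$ at $0$.

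In more detail, I would first note that, pointwise on $\Omega$, the inequality $|Y_{\tau}-\oYk_{\tau}|\leq Y^{*}\eins_{\{Y^{*}>k\}}$ holds for every $\tau\in\cT$: indeed, $\oYk_{\tau}=Y_{\tau}$ on $\{|Y_{\tau}|\leq k\}\supseteq\{Y^{*}\leq k\}$, and on the complement one has $|Y_{\tau}-\oYk_{\tau}|\leq|Y_{\tau}|\leq Y^{*}$. Since every $\qr\in\overline{\cQ}$ integrates $Y^{*}\in\cX$ (by definition of $\overline{\cQ}$), this yields, for each $\qr\in\overline{\cQ}$ and each $\tau\in\cT$,
\[
\bigl|\ex_{\qr}[Y_{\tau}]-\ex_{\qr}[\oYk_{\tau}]\bigr|\leq \ex_{\qr}\bigl[Y^{*}\eins_{\{Y^{*}>k\}}\bigr].
\]

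Next, using the elementary inequality $|\sup_{\tau}a_{\tau}-\sup_{\tau}b_{\tau}|\leq\sup_{\tau}|a_{\tau}-b_{\tau}|$ (both suprema being finite, since $\ex_{\qr}[|Y_{\tau}|]\leq\ex_{\qr}[Y^{*}]<\infty$), I would pull the supremum over $\tau$ inside to obtain
\[
\Bigl|\sup_{\tau\in\cT}\ex_{\qr}[Y_{\tau}]-\sup_{\tau\in\cT}\ex_{\qr}[\oYk_{\tau}]\Bigr|\leq \ex_{\qr}\bigl[Y^{*}\eins_{\{Y^{*}>k\}}\bigr].
\]
Taking the supremum over $\qr\in\overline{\cQ}$, and noting that $Y^{*}\eins_{\{Y^{*}>k\}}\in\cX$ (since it is dominated by $Y^{*}$), \eqref{Vorbereitungen} gives
\[
\sup_{\qr\in\overline{\cQ}}\Bigl|\sup_{\tau\in\cT}\ex_{\qr}[Y_{\tau}]-\sup_{\tau\in\cT}\ex_{\qr}[\oYk_{\tau}]\Bigr|\leq \rho_{\cQ}\bigl(Y^{*}\eins_{\{Y^{*}>k\}}\bigr).
\]

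It remains to show that the right-hand side vanishes as $k\to\infty$. Because $Y$ has bounded paths, $Y^{*}(\omega)<\infty$ for every $\omega$, and therefore $Y^{*}\eins_{\{Y^{*}>k\}}\searrow 0$ pointwise as $k\to\infty$; the sequence is manifestly monotone nonincreasing. The assumed continuity from above of $\rho_{\cQ}$ at $0$ then yields $\rho_{\cQ}(Y^{*}\eins_{\{Y^{*}>k\}})\searrow 0$, finishing the proof. I do not anticipate a genuine obstacle; the only points that need a line of care are the pathwise truncation bound and verifying that $Y^{*}\eins_{\{Y^{*}>k\}}$ lies in $\cX$ so that \eqref{Vorbereitungen} applies.
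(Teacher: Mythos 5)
Your proof is correct and follows essentially the same route as the paper: a pointwise truncation bound (the paper uses the slightly sharper majorant $\eins_{\{Y^{*}>k\}}(Y^{*}-k)$, you use $Y^{*}\eins_{\{Y^{*}>k\}}$, both of which decrease to $0$), then the identity $\sup_{\qr\in\overline{\cQ}}\ex_{\qr}[\cdot]=\rho_{\cQ}(\cdot)$ from \eqref{Vorbereitungen}, and finally continuity from above of $\rho_{\cQ}$ at $0$.
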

\begin{proof}
For $\tau\in\cT$ and $k\in\N$ we may observe $Y^{*}\in\cX$ by \eqref{Integrierbarkeit} and
$$
|Y_{\tau} - \oYk_{\tau}|\leq \eins_{\{Y^{*} > k\}}
\big(Y^{*} - k\big).
$$
Then
\begin{eqnarray*}
\sup_{\qr\in\overline{\cQ}}\big|\sup_{\tau\in\cT}\ex_{\qr}[Y_{\tau}] - \sup_{\tau\in\cT}\ex_{\qr}[\oYk_{\tau}]\big|
&\leq&
\sup_{\qr\in\overline{\cQ}}\sup_{\tau\in\cT}\ex_{\qr}[|Y_{\tau} - \oYk_{\tau}|]\\
&\leq& 
\sup_{\qr\in\overline{\cQ}}\ex_{\qr}\big[\eins_{\{Y^{*} > k\}}
\big(Y^{*} - k\big)\big]\\
&\stackrel{\eqref{Vorbereitungen}}{=}&
\rho_{\cQ}\Big(\eins_{\{Y^{*} > k\}}\cdot
\big(Y^{*} - k\big)\Big)
\end{eqnarray*}
holds for any $k\in\N$. Finally, $\eins_{\{Y^{*} > k\}}\cdot
\big(Y^{*} - k\big)\searrow 0$, and thus 
the statement of Lemma \ref{boundedApproximation} follows immediately because $\rho_{\cQ}$ is assumed to be continuous from above at $0$. 
\end{proof}
In the next providing step we shall replace in \eqref{stoppproblemdualneu} the process $Y$ with the processes $\oYk$.
We want to look when the optimal value coincides with optimal value of the corresponding problem w.r.t. $co(\cQ)$.
\begin{lemma}
\label{same optimal value bounded}
If \eqref{Integrierbarkeit} holds, 
and if $\rho_{\cQ}$ is continuous from above at $0$, then
$$
\inf_{\qr\in\overline{\cQ}}\sup_{\tau\in\cT}\ex_{\qr}[\oYk_{\tau}] = 
\inf_{\qr\in co(\cQ)}\sup_{\tau\in\cT}\ex_{\qr}[\oYk_{\tau}]\quad\mbox{for every}~k\in\N.
$$
\end{lemma}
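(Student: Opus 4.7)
The inequality $\inf_{\qr\in\overline{\cQ}}\sup_{\tau\in\cT}\ex_{\qr}[\oYk_{\tau}] \leq \inf_{\qr\in co(\cQ)}\sup_{\tau\in\cT}\ex_{\qr}[\oYk_{\tau}]$ is immediate from $co(\cQ)\subseteq\overline{\cQ}$, so I focus on the reverse estimate. The plan is to fix $\qr\in\overline{\cQ}$ and $\varepsilon>0$ and exhibit some $\widetilde{\qr}\in co(\cQ)$ satisfying $\sup_{\tau\in\cT}\ex_{\widetilde{\qr}}[\oYk_{\tau}]\leq\sup_{\tau\in\cT}\ex_{\qr}[\oYk_{\tau}]+\varepsilon$. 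Taking the infimum over $\qr$ on the right and over $\widetilde{\qr}$ on the left, then letting $\varepsilon\downarrow 0$, yields the claim.

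The two key tools are Lemma~\ref{compactness condition}, which supplies $\sigma(\overline{\cQ},\cX)$-density of $co(\cQ)$ in $\overline{\cQ}$ and the continuity of each $\qr'\mapsto\ex_{\qr'}[\oYk_{\tau}]$ in this topology, and Simons' lemma. Concretely, I first extract a sequence $(\qr_{n})_{n\in\N}$ from $co(\cQ)$ so that $\ex_{\qr_{n}}[\oYk_{\tau}]\longrightarrow\ex_{\qr}[\oYk_{\tau}]$ for every $\tau\in\cT$. Setting $f_{n}(\tau)\dot=\ex_{\qr_{n}}[\oYk_{\tau}]$ gives a sequence in $\ell^{\infty}(\cT)$ uniformly bounded by $k$ with $\limsup_{n\to\infty}f_{n}(\tau)=\ex_{\qr}[\oYk_{\tau}]$ pointwise in $\tau$. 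Simons' lemma then furnishes weights $\lambda_{1},\ldots,\lambda_{N}\geq 0$ with $\sum_{i=1}^{N}\lambda_{i}=1$ and indices $n_{1},\ldots,n_{N}$ such that
\[
\sup_{\tau\in\cT}\sum_{i=1}^{N}\lambda_{i}f_{n_{i}}(\tau)\leq\sup_{\tau\in\cT}\limsup_{n\to\infty}f_{n}(\tau)+\varepsilon=\sup_{\tau\in\cT}\ex_{\qr}[\oYk_{\tau}]+\varepsilon.
\]
The measure $\widetilde{\qr}\dot=\sum_{i=1}^{N}\lambda_{i}\qr_{n_{i}}$ then lies in $co(\cQ)$, satisfies $\ex_{\widetilde{\qr}}[\oYk_{\tau}]=\sum_{i=1}^{N}\lambda_{i}f_{n_{i}}(\tau)$ by linearity, and thereby meets the target bound.

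I expect the main technical obstacle to be the extraction of a \emph{sequence} (not merely a net) from $co(\cQ)$ converging to $\qr$ in such a way that $\ex_{\qr_{n}}[\oYk_{\tau}]\to\ex_{\qr}[\oYk_{\tau}]$ holds simultaneously for every $\tau\in\cT$, since Simons' lemma in its standard sequential form demands precisely this. I would handle it by reducing the supremum over $\cT$ to a supremum over a countable subfamily $\cT_{0}\subseteq\cT$ — for instance dyadic-valued stopping times approximating from above — using right-continuity of the paths of $\oYk$ together with the quasi-left-upper-semicontinuity of $Y$. On such a countable family the pointwise topology generated by the continuous functionals $\qr'\mapsto\ex_{\qr'}[\oYk_{\tau}]$, $\tau\in\cT_{0}$, is metrizable on the $\sigma(\overline{\cQ},\cX)$-compact set $\overline{\cQ}$, so sequential density of $co(\cQ)$ follows from topological density and the Simons step closes the argument.
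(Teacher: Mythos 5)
Your reduction to the easy inclusion $co(\cQ)\subseteq\overline{\cQ}$ and your overall plan (approximate $\qr\in\overline{\cQ}$ by a sequence in $co(\cQ)$, then use Simons' lemma to pass from $\limsup$ to a finite convex combination) is the same skeleton as the paper's proof. However, there is a genuine gap at the decisive step: you invoke Simons' lemma without verifying its key hypothesis. The version used here (\cite[Lemma 2]{Simons1972}) requires that \emph{every countable convex combination} $\sum_{n}\lambda_{n}f_{n}$ \emph{attains its supremum} on $\cT$; without this ``boundary'' condition the inequality $\sup_{\tau}\limsup_{n}f_{n}(\tau)\geq\inf_{f\in co(\{f_{n}\})}\sup_{\tau}f(\tau)$ simply is not available. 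Your proposal never addresses why $\tau\mapsto\ex_{\widetilde{\qr}}[\oYk_{\tau}]$ should have a maximizer in $\cT$ for a countable mixture $\widetilde{\qr}$ of elements of $co(\cQ)$. This is exactly where the paper spends most of its effort: it perturbs each $\qr_{n}$ to $\qr_{n}^{\lambda}$ with $d\qr_{n}^{\lambda}/d\pr=\lambda\, d\qr_{n}/d\pr+(1-\lambda)$ so that all measures involved are \emph{equivalent} to $\pr$ (only then does $\cT^{\qr}=\cT$ and does the optimal-stopping existence result \cite[Proposition B.6]{KobylanskiQuenez2012}, applied to the nonnegative bounded process $\oYk+k$, produce the required maximizers for every countable convex combination), applies Simons' lemma to $f_{n,\lambda}$, and only at the end sends $\lambda\nearrow1$. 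Members of $co(\cQ)$ are merely absolutely continuous w.r.t.\ $\pr$, so this detour is not cosmetic.

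Two further remarks. First, the sequence-extraction issue you single out as the main obstacle is real but is resolved in the paper more directly: by Lemma \ref{compactness condition} the densities $\{d\qr/d\pr\mid\qr\in co(\cQ)\}$ form a relatively weakly compact subset of $L^{1}$ whose weak closure contains $d\qr_{0}/d\pr$, so the Eberlein--\v{S}mulian theorem yields a sequence $(\qr_{n})$ with $\ex_{\qr_{n}}[X]\to\ex_{\qr_{0}}[X]$ for all bounded $X$, hence for all $\oYk_{\tau}$ simultaneously. Second, your proposed workaround of restricting to a countable family $\cT_{0}$ of dyadic-valued stopping times would make the Simons hypothesis \emph{harder}, not easier, to verify: optimal stopping times for the mixtures need not take values in a prescribed countable set, so attainment of the supremum over $\cT_{0}$ is not guaranteed even where attainment over $\cT$ is.
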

\begin{proof}
Let $k\in\N$, and fix $\qr_{0}\in\overline{\cQ}$. In view of Lemma \ref{compactness condition} its Radon-Nikodym derivative $d\qr_{0}/d\pr$ is in the weak closure of $\{d\qr/d\pr\mid \qr\in co(\cQ)\}$, viewed as a subset of the $L^{1}$-space on $(\Omega,\cF,\pr)$. Moreover, by Lemma \ref{compactness condition}, the set $\{d\qr/d\pr\mid \qr\in co(\cQ)\}$ is a relatively weakly compact subset of the $L^{1}$-space on $(\Omega,\cF,\pr)$. Then by Eberlein-Smulian theorem (cf. e.g. \cite{Kremp1986}), we may select a sequence $(\qr_{n})_{n\in\N}$ in $co(\cQ)$ such that
\begin{equation}
\label{Eberlein-Smulian}
\lim_{n\to\infty}\ex_{\qr_{n}}[X] 
=
\lim_{n\to\infty}\ex\Big[X~\frac{d\qr_{n}}{d\pr}\Big] 
=
\ex\Big[X~\frac{d\qr_{0}}{d\pr}\Big] 
=
\ex_{\qr_{0}}[X]
\end{equation}
holds for every $\pr$-essentially bounded random variable $X$.
\medskip

Let us introduce the set $\cP^{e}$ of probability measures on $\cF$ which are equivalent to $\pr$. Then for any $\qr\in\cP^{e}$ the $\sigma$-algebra $\cF_{0}$ contains all null sets of $\cF$, and $\qr(A)\in\{0,1\}$ holds for every $A\in\cF_{0}$. In particular, the set $\cT^{\qr}$ of all stopping times w.r.t. $\OFFQ$ coincides with $\cT$ for $\qr\in\cP^{e}$. Note also that $(\oYk_{t} + k)_{0 \leq t \leq T}$ is a nonnegative, bounded, right-continuous and $(\cF_t)$-adapted process which is quasi left-uppersemicontinuous w.r.t. every 
$\qr\in\cP^{e}$.
Here quasi left-uppersemicontinuity w.r.t. $\qr$ has to be understood as we have defined it w.r.t. to $\pr$. Hence, by  Fatou lemma $\limsup_{m\to\infty}\ex_{\qr}[\oYk_{\tau_{m}}+k]\leq \ex_{\qr}[\oYk_{\tau}+k]$ is valid for every $\qr\in\cP^{e}$ whenever $(\tau_{m})_{m\in\N}$ is a sequence in $\cT$ satisfying $\tau_{m}\nearrow\tau$ for some $\tau\in\cT$. 
Then we may draw on \cite[Proposition B.6]{KobylanskiQuenez2012} to conclude
\begin{equation}
\label{optimal stopping time}
\forall~\qr\in\cP^{e}~\exists~\tau\in\cT:~\ex_{\qr}[\oYk_{\tau} + k] = \sup_{\tau\in\cT}\ex_{\qr}[\oYk_{\tau} + k]. 
\end{equation}
Let us denote for $\qr\in co(\cQ)$ and $\lambda\in ]0,1[$ by $\qr^{\lambda}$ the probability measure on $\cF$ defined via $\pr$-Radon-Nikodym derivative
$$
\frac{d\qr^{\lambda}}{d\pr}~\dot=~\lambda \frac{d\qr}{d\pr} + (1-\lambda).
$$
By this construction we may introduce the sets 
$$
\cQ^{\lambda}~\dot=~\{\qr^{\lambda}\mid \qr\in co(\cQ)\}\quad(\lambda \in ]0,1[).
$$
Obviously, these sets are contained in $\cP^{e}$. Now, define for $\lambda\in ]0,1[$ the sequence $(f_{n,\lambda})_{n\in\N}$ of mappings 
$$
f_{n,\lambda}:\cT\rightarrow\R,~\tau\mapsto\ex_{\qr_{n}^{\lambda}}[\oYk_{\tau} + k].
$$
Notice that the sequence $(f_{n,\lambda})_{n\in\N}$ is uniformly bounded for $\lambda\in ]0,1[$ because 
\begin{equation}
\label{gleichmaessige Beschraenkung}
|\oYk_{\tau} + k|\leq 2 k\quad\mbox{for every}~\tau\in\cT.
\end{equation}
We want to apply Simons' lemma (cf. \cite[Lemma 2]{Simons1972}) to each sequence $(f_{n,\lambda})_{n\in\N}$. For this purpose it is left to show for fixed $\lambda\in ]0,1[$ that we may find for any countable convex combination of $(f_{n,\lambda})_{n\in\N}$ some maximizer. So let $(\lambda_{n})_{n\in\N}$ be a sequence in $[0,1]$ with $\sum_{n=1}^{\infty}\lambda_{n} = 1$. We may define by 
$$
\sum_{n=1}^{\infty}\lambda_{n}\qr^{\lambda}_{n}(A) = \qr(A)\quad\mbox{for every}~A\in\cF
$$ 
a probability measure on $\cF$ which belongs to $\cP^{e}$. Then by monotone convergence theorem
\begin{eqnarray}
\nonumber
\sum_{n=1}^{\infty}\lambda_{n}f_{n,\lambda}(\tau) 
= 
\sum_{n=1}^{\infty}\lambda_{n}\int_{0}^{\infty}\qr^{\lambda}_{n}(\{(\oYk_{\tau} + k) > x\})~dx
&=&
\int_{0}^{\infty}\qr(\{(\oYk_{\tau} + k) > x\})~dx\\
\label{Basisdarstellung} 
&=& 
\ex_{\qr}[\oYk_{\tau} + k]\quad\mbox{for}~\tau\in\cT.
\end{eqnarray}
Moreover, by \eqref{optimal stopping time}, there exists some $\tau_{*}\in\cT$ such that
$$
\sum_{n=1}^{\infty}\lambda_{n}f_{n, \lambda}(\tau_{*}) 
= 
\ex_{\qr}[\oYk_{\tau_{*}} + k] 
= 
\sup_{\tau\in\cT}\ex_{\qr}[\oYk_{\tau} + k] 
= 
\sup_{\tau\in\cT}\sum_{n=1}^{\infty}\lambda_{n}f_{n, \lambda}(\tau) 
$$
Therefore, the assumptions of Simons' lemma (cf. \cite[Lemma 2]{Simons1972}) are satisfied so that we may conclude
$$
\sup_{\tau\in\cT}\limsup_{n\to\infty}f_{n,\lambda}(\tau)
\geq 
\inf_{f\in co(\{f_{n,\lambda}\mid n\in\N\})}\sup_{\tau\in\cT}f(\tau),
$$
where $co(\{f_{n;\lambda}\mid n\in\N\})$ denotes the convex hull of $\{f_{n,\lambda}\mid n\in\N\}$. For any convex combination 
$f = \sum_{i=1}^{r}\lambda_{i} f_{n_{i},\lambda}$, the probability measure $\qr~\dot=~\sum_{i=1}^{r}\lambda_{i}\qr_{n_{i}}$ is a member of $co(\cQ)$, 
and 
$$
f(\tau) = \ex_{\qr^{\lambda}}[\oYk_{\tau} + k]\quad\mbox{for}~\tau\in\cT.
$$
Therefore on the one hand
$$
\sup_{\tau\in\cT}\limsup_{n\to\infty}\ex_{\qr^{\lambda}_{n}}[\oYk_{\tau} + k]
\geq 
\inf_{\qr\in\cQ^{\lambda}}\sup_{\tau\in\cT}\ex_{\qr}[\oYk_{\tau} + k].
$$
On the other hand by \eqref{Eberlein-Smulian}
\begin{eqnarray*}
\sup_{\tau\in\cT}\limsup_{n\to\infty}\ex_{\qr^{\lambda}_{n}}[\oYk_{\tau} + k] 
= 
\sup_{\tau\in\cT}\left(\lambda\ex_{\qr_{0}}[\oYk_{\tau} + k] + (1-\lambda)\ex[\oYk_{\tau} + k]\right).
\end{eqnarray*}
Hence by \eqref{gleichmaessige Beschraenkung} and nonnegativity of $(\oYk_{t} + k)_{0 \leq t \leq T}$
\begin{eqnarray*}
\lambda \sup_{\tau\in\cT}\ex_{\qr_{0}}[\oYk_{\tau} + k] + (1- \lambda) 2 k
&\geq&
\sup_{\tau\in\cT}\left(\lambda\ex_{\qr_{0}}[\oYk_{\tau} + k] + (1-\lambda)\ex[\oYk_{\tau} + k]\right)\\
&\geq& 
\inf_{\qr\in\cQ^{\lambda}}\sup_{\tau\in\cT}\ex_{\qr}[\oYk_{\tau} + k]\\
&=& 
\inf_{\qr\in co(\cQ)}\sup_{\tau\in\cT}\left(\lambda \ex_{\qr}[\oYk_{\tau} + k] + (1-\lambda)\ex[\oYk_{\tau} + k]\right)\\
&\geq&
\lambda \inf_{\qr\in co(\cQ)}\sup_{\tau\in\cT}\ex_{\qr}[\oYk_{\tau} + k].
\end{eqnarray*}
Then by sending $\lambda\nearrow 1$
\begin{eqnarray*}
\sup_{\tau\in\cT}\ex_{\qr_{0}}[\oYk_{\tau} + k] 
\geq
\inf_{\qr\in co(\cQ)}\sup_{\tau\in\cT}\ex_{\qr}[\oYk_{\tau} + k],
\end{eqnarray*}
and thus
$$
\sup_{\tau\in\cT}\ex_{\qr_{0}}[\oYk_{\tau}] 
\geq 
\inf_{\qr\in co(\cQ)}\sup_{\tau\in\cT}\ex_{\qr}[\oYk_{\tau}].
$$
This completes the proof because $\qr_{0}$ was arbitrarily chosen from $\overline{\cQ}$, and $co(\cQ)\subseteq\overline{\cQ}$ holds.
\end{proof}
We are ready to provide the following criterion which ensures that the optimal value 
of \eqref{stoppproblemdualneu} coincides with the optimal value of the corresponding problem w.r.t. $co(\cQ)$.
\begin{proposition}
\label{same dual optimal value}
Let \eqref{Integrierbarkeit} be fulfilled. If 
$\rho_{\cQ}$ is continuous from above at $0$, then
$$
\inf_{\qr\in\overline{\cQ}}\sup_{\tau\in\cT}\ex_{\qr}[Y_{\tau}] = 
\inf_{\qr\in co(\cQ)}\sup_{\tau\in\cT}\ex_{\qr}[Y_{\tau}].
$$
\end{proposition}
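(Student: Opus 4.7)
The plan is to derive the identity by combining the two preceding lemmas with a standard truncation-and-limit argument. The inequality
$$
\inf_{\qr\in\overline{\cQ}}\sup_{\tau\in\cT}\ex_{\qr}[Y_{\tau}] \;\leq\; \inf_{\qr\in co(\cQ)}\sup_{\tau\in\cT}\ex_{\qr}[Y_{\tau}]
$$
is immediate from the inclusion $co(\cQ) \subseteq \overline{\cQ}$, so the whole task is to establish the reverse inequality.

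For this, I would first observe that Lemma~\ref{boundedApproximation} provides uniform convergence on $\overline{\cQ}$:
$$
\lim_{k\to\infty}\sup_{\qr\in\overline{\cQ}}\bigl|\,\sup_{\tau\in\cT}\ex_{\qr}[Y_{\tau}] - \sup_{\tau\in\cT}\ex_{\qr}[\oYk_{\tau}]\,\bigr| = 0.
$$
Because infima are $1$-Lipschitz with respect to the sup-norm on bounded functions, this uniform convergence passes to the infimum and yields
$$
\inf_{\qr\in\overline{\cQ}}\sup_{\tau\in\cT}\ex_{\qr}[Y_{\tau}] \;=\; \lim_{k\to\infty}\,\inf_{\qr\in\overline{\cQ}}\sup_{\tau\in\cT}\ex_{\qr}[\oYk_{\tau}].
$$
Since $co(\cQ)\subseteq \overline{\cQ}$, the same uniform approximation holds a fortiori on $co(\cQ)$, so
$$
\inf_{\qr\in co(\cQ)}\sup_{\tau\in\cT}\ex_{\qr}[Y_{\tau}] \;=\; \lim_{k\to\infty}\,\inf_{\qr\in co(\cQ)}\sup_{\tau\in\cT}\ex_{\qr}[\oYk_{\tau}].
$$

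Now I would apply Lemma~\ref{same optimal value bounded}, which states exactly that for every fixed $k\in\N$ the two inner infima coincide:
$$
\inf_{\qr\in\overline{\cQ}}\sup_{\tau\in\cT}\ex_{\qr}[\oYk_{\tau}] = \inf_{\qr\in co(\cQ)}\sup_{\tau\in\cT}\ex_{\qr}[\oYk_{\tau}].
$$
Passing to the limit $k\to\infty$ in both displays above completes the argument.

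The substantive work has already been absorbed into the two lemmas: Lemma~\ref{boundedApproximation} uses continuity from above of $\rho_{\cQ}$ at $0$ to reduce the problem to the bounded setting, and Lemma~\ref{same optimal value bounded} uses Simons' lemma together with the Eberlein--\v{S}mulian selection of a weakly convergent sequence in $co(\cQ)$ approximating an arbitrary $\qr_{0}\in\overline{\cQ}$. The present proposition therefore requires essentially no new ideas; the only step that needs a word of justification is the interchange of $\inf$ and $\lim$, which is the standard consequence of uniform convergence noted above. No technical obstacle is anticipated beyond correctly bookkeeping the inclusion $co(\cQ)\subseteq\overline{\cQ}$ when transferring the uniform bound.
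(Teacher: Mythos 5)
Your proposal is correct and follows essentially the same route as the paper: both reduce to the truncated processes $\oY^{k}$ via Lemma~\ref{same optimal value bounded} and then pass to the limit using the uniform approximation of Lemma~\ref{boundedApproximation}. The paper organizes the limit step as a telescoping estimate bounded by $2\sup_{\qr\in\overline{\cQ}}|\sup_{\tau}\ex_{\qr}[Y_{\tau}]-\sup_{\tau}\ex_{\qr}[\oY^{k}_{\tau}]|$, which is exactly your observation that the infimum is $1$-Lipschitz with respect to the sup-norm.
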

\begin{proof}
Firstly $\inf_{\qr\in\overline{\cQ}}\sup_{\tau\in\cT}\ex_{\qr}[Y_{\tau}] \leq 
\inf_{\qr\in co(\cQ)}\sup_{\tau\in\cT}\ex_{\qr}[Y_{\tau}]$ because $co(\cQ)\subseteq\overline{\cQ}$. Then we obtain by Lemma \ref{same optimal value bounded} for every $k\in\N$
\begin{eqnarray*}
0
&\leq&
\inf_{\qr\in co(\cQ)}\sup_{\tau\in\cT}\ex_{\qr}[Y_{\tau}] -
\inf_{\qr\in\overline{\cQ}}\sup_{\tau\in\cT}\ex_{\qr}[Y_{\tau}] \\
&\stackrel{{\rm Lemma~ \ref{same optimal value bounded}}}{=}& 
\inf_{\qr\in co(\cQ)}\sup_{\tau\in\cT}\ex_{\qr}[Y_{\tau}] -
\inf_{\qr\in co(\cQ)}\sup_{\tau\in\cT}\ex_{\qr}[\oY^{k}_{\tau}] +
\inf_{\qr\in\overline{\cQ}}\sup_{\tau\in\cT}\ex_{\qr}[\oY^{k}_{\tau}] -
\inf_{\qr\in\overline{\cQ}}\sup_{\tau\in\cT}\ex_{\qr}[Y_{\tau}] \\
&\leq& 
2\cdot\sup_{\qr\in\overline{\cQ}}~\big|~\sup_{\tau\in\cT}\ex_{\qr}[Y_{\tau}] - 
\sup_{\tau\in\cT}\ex_{\qr}[\oY^{k}_{\tau}]~\big|
\end{eqnarray*}
The statement of Proposition \ref{same dual optimal value} follows now immediately from 
Lemma \ref{boundedApproximation}.
\end{proof}

\subsection{Proof of Theorem \ref{generalminimax}}
\label{proof of generalminimax}
Let $\overline{\cQ}$ be defined as in the previous subsection. The idea of the proof is to verify first duality of the problems 
\eqref{stoppproblemneu} and \eqref{stoppproblemdualneu}, and then to apply Proposition \ref{same optimal value} along with Proposition \ref{same dual optimal value}. Concerning the minimax relationship of the problems 
\eqref{stoppproblemneu} and \eqref{stoppproblemdualneu} we may reduce considerations to stopping times with finite range if $\rho_{\cQ}$ is continuous from above at $0$.
\begin{lemma}
\label{finite range stopping times}
If $Y$ fulfills \eqref{Integrierbarkeit}, and if $\rho_{\cQ}$ is continuous from above at 0, then
\begin{enumerate}
\item [{\rm (i)}] $\sup_{\tau\in\cT}\inf_{\qr\in\overline{\cQ}}\ex_{\qr}[Y_{\tau}] = \sup_{\tau\in\cT_{f}}\inf_{\qr\in\overline{\cQ}}\ex_{\qr}[Y_{\tau}]$;
\item [{\rm (ii)}] $\inf_{\qr\in\overline{\cQ}}\sup_{\tau\in\cT}\ex_{\qr}[Y_{\tau}] = \inf_{\qr\in\overline{\cQ}}\sup_{\tau\in\cT_{f}}\ex_{\qr}[Y_{\tau}]$.
\end{enumerate}
Here $\cT_f$ denotes the set of all stopping times from $\cT$ with finite range.
\end{lemma}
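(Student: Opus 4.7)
The plan is to show that restricting to finite-range stopping times does not change either side, using only the right-continuity of $Y$, the fact that $Y^{*}\in\cX$, and the continuity from above at $0$ of $\rho_{\cQ}$. Since $\cT_{f}\subseteq\cT$, the inequalities ``$\geq$'' in (i) and (ii) are obvious, so only the reverse bounds require work. For any $\tau\in\cT$ I would use the standard dyadic right-approximation
$$
\tau_{n}~:=~\sum_{k=0}^{2^{n}-1}\frac{(k+1)T}{2^{n}}\,\eins_{\{kT/2^{n}\leq\tau<(k+1)T/2^{n}\}}+T\,\eins_{\{\tau=T\}},
$$
which belongs to $\cT_{f}$, satisfies $\tau_{n}\searrow\tau$ (by right-continuity of $(\cF_{t})$ these are indeed stopping times), and by right-continuity of $Y$ yields $Y_{\tau_{n}}\to Y_{\tau}$ pointwise on $\Omega$, with $|Y_{\tau_{n}}|\leq Y^{*}$.

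For part (ii) the argument is then immediate: every $\qr\in\overline{\cQ}$ makes $Y^{*}\in\cX$ integrable by construction of $\overline{\cQ}$, so dominated convergence gives $\ex_{\qr}[Y_{\tau_{n}}]\to\ex_{\qr}[Y_{\tau}]$, hence $\sup_{\sigma\in\cT_{f}}\ex_{\qr}[Y_{\sigma}]\geq\ex_{\qr}[Y_{\tau}]$; taking $\sup_{\tau\in\cT}$ and then $\inf_{\qr\in\overline{\cQ}}$ delivers (ii).

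For part (i) the estimate I plan to use is
$$
\inf_{\qr\in\overline{\cQ}}\ex_{\qr}[Y_{\tau}]-\inf_{\qr\in\overline{\cQ}}\ex_{\qr}[Y_{\tau_{n}}]\;\leq\;\sup_{\qr\in\overline{\cQ}}\ex_{\qr}\bigl[|Y_{\tau}-Y_{\tau_{n}}|\bigr]\;=\;\rho_{\cQ}\bigl(|Y_{\tau}-Y_{\tau_{n}}|\bigr),
$$
where the last equality uses \eqref{Vorbereitungen}. The main obstacle is that $|Y_{\tau}-Y_{\tau_{n}}|$ is not monotone in $n$, so I cannot apply continuity from above directly. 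The fix is to pass to the envelope $Z_{n}:=\sup_{m\geq n}|Y_{\tau}-Y_{\tau_{m}}|$, which belongs to $\cX$ (since $Z_{n}\leq 2Y^{*}$) and satisfies $Z_{n}\searrow 0$ $\pr$-a.s. Monotonicity of $\rho_{\cQ}$ combined with its continuity from above at $0$ then gives $\rho_{\cQ}(|Y_{\tau}-Y_{\tau_{n}}|)\leq\rho_{\cQ}(Z_{n})\searrow 0$.

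Assembling this yields $\inf_{\qr\in\overline{\cQ}}\ex_{\qr}[Y_{\tau}]\leq\sup_{\sigma\in\cT_{f}}\inf_{\qr\in\overline{\cQ}}\ex_{\qr}[Y_{\sigma}]+\rho_{\cQ}(Z_{n})$ for every $n$; letting $n\to\infty$ and then $\sup_{\tau\in\cT}$ gives (i). The only slightly delicate point is the passage to the envelope $Z_{n}$, without which the pointwise convergence $Y_{\tau_{n}}\to Y_{\tau}$ would not be enough to invoke the continuity-from-above hypothesis on $\rho_{\cQ}$.
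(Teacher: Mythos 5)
Your proposal is correct and follows essentially the same route as the paper: the same dyadic right-approximation $\tau_n\searrow\tau$ in $\cT_f$, the same monotone envelope $\sup_{m\geq n}|Y_\tau-Y_{\tau_m}|\searrow 0$ to invoke continuity from above of $\rho_{\cQ}$ for part (i), and dominated convergence (justified by $Y^*\in\cX$ and the definition of $\overline{\cQ}$) for part (ii). No gaps.
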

\begin{proof}
For $\tau\in\cT$ we may define by
$$
\tau[j](\omega) := \min\{k/2^{j}\mid k\in\N, \tau(\omega)\leq k/2^{j}\}\wedge T
$$
a sequence $(\taur[j])_{j\in\N}$ in $\cT_{f}$ satisfying the 
$\tau[j]\searrow\tau$ pointwise, and by right-continuity of the paths of $Y$
\begin{equation}
\label{Diskretisierung}
\lim\limits_{j\to\infty}Y_{\tau[j](\omega)}(\omega) = Y_{\tau(\omega)}(\omega)\quad\mbox{for any}~\omega\in\Omega. 
\end{equation}
For the proof of statement (i) let us fix any $\tau\in\cT$. Then $|Y_{\tau} - Y_{\tau[j]}|\to 0$ pointwise for $j\to\infty$ due to \eqref{Diskretisierung}. Set 
$$
\hY_{k}~\dot=~\sup_{j\geq k}|Y_{\tau} - Y_{\tau[j]}|\quad\mbox{for}~k\in\N.
$$
This defines a sequence $(\hY_{k})_{k\in\N}$ of random variables $\hY_{k}$ on $(\Omega,\cF,\pr)$ which satisfy 
$|\hY_{k}|\leq 2\sup_{t\in [0,T]}|Y_{t}|$ so that they belong to $L^{1}(\cQ)$. Since 
$\hY_{k}\searrow 0$, and since $\rho_{\cQ}$ is continuous from above at 0, we obtain
$$
0\leq\rho_{\cQ}(|Y_{\tau} - Y_{\tau[j]}|)\leq\rho_{\cQ}(\hY_{j})\to 0\quad\mbox{for}~j\to\infty.
$$
Hence by \eqref{Vorbereitungen}
$$
0 \leq |\inf_{\qr\in\overline{\cQ}}\ex_{\qr}[Y_{\tau}] - \inf_{\qr\in\overline{\cQ}}\ex_{\qr}[Y_{\tau[j]}]|
\stackrel{\eqref{Vorbereitungen}}{\leq} 
\rho_{\cQ}(|Y_{\tau} - Y_{\tau[j]}|)\to 0 \quad\mbox{for}~j\to\infty,
$$
and thus
$$
\sup_{\tau\in\cT_{f}}\inf_{\qr\in\overline{\cQ}}\ex_{\qr}[Y_{\tau}]
\geq
\lim_{j\to\infty}\inf_{\qr\in\overline{\cQ}}\ex_{\qr}[Y_{\tau[j]}] 
= 
\inf_{\qr\in\overline{\cQ}}\ex_{\qr}[Y_{\tau}].
$$
The stopping time $\tau$ was arbitrarily chosen so that we may conclude
$$
\sup_{\tau\in\cT_{f}}\inf_{\qr\in\overline{\cQ}}\ex_{\qr}[Y_{\tau}] 
\geq 
\sup_{\tau\in\cT}\inf_{\qr\in\overline{\cQ}}\ex_{\qr}[Y_{\tau}]
\geq 
\sup_{\tau\in\cT_{f}}\inf_{\qr\in\overline{\cQ}}\ex_{\qr}[Y_{\tau}],
$$
where the last inequality is obvious due to $\cT_{f}\subseteq\cT$. So statement (i) is shown.
\bigskip

In order to prove statement (ii) let us fix any $\varepsilon > 0$.  Then for arbitrary $\qr\in\overline{\cQ}$ we may find some $\tau_{0}\in\cT$ such that
\begin{equation}
\label{erste Ungleichung}
\sup_{\tau\in\cT}\ex_{\qr}[Y_{\tau}] - \varepsilon < \ex_{\qr}[Y_{\tau_{0}}].
\end{equation}
We have $Y_{\tau_{0}[j]}\to Y_{\tau_{0}}$ pointwise for $j\to\infty$, and 
$|Y_{\tau_{0}[j]}|\leq \sup_{t\in [0,T]}|Y_{t}|$ for every $j\in\N$. So in view of 
\eqref{Integrierbarkeit} we may apply the dominated convergence theorem to conclude
$$
\lim_{j\to\infty}\ex_{\qr}[Y_{\tau_{0}[j]}] = \ex_{\qr}[Y_{\tau_{0}}],
$$
and thus by \eqref{erste Ungleichung}
$$
\sup_{\tau\in\cT_{f}}\ex_{\qr}[Y_{\tau}]\geq \lim_{j\to\infty}\ex_{\qr}[Y_{\tau_{0}[j]}] = \ex_{\qr}[Y_{\tau_{0}}] > \sup_{\tau\in\cT}\ex_{\qr}[Y_{\tau}] - \varepsilon.
$$
Letting $\varepsilon\searrow 0$, we obtain
$$
\sup_{\tau\in\cT_{f}}\ex_{\qr}[Y_{\tau}]\geq \sup_{\tau\in\cT}\ex_{\qr}[Y_{\tau}]
\geq 
\sup_{\tau\in\cT_{f}}\ex_{\qr}[Y_{\tau}],
$$
where the last inequality is trivial due to $\cT_{f}\subseteq\cT$. Since $\qr$ was arbitrarily chosen, statement (ii) follows immediately. The proof is complete. 
\end{proof}
In the next step we want to show $\sup_{\tau\in\cT_{f}}\inf_{\qr\in\overline{\cQ}}\ex_{\qr}[Y_{\tau}] = \inf_{\qr\in\overline{\cQ}}\sup_{\tau\in\cT_{f}}\ex_{\qr}[Y_{\tau}]$.
\begin{proposition}
\label{Hilfsminimax}
Let \eqref{Integrierbarkeit} be fullfilled. If $\rho_{\cQ}$ is continuous from above at $0$, then under assumption (A) from Section \ref{general abstract minimax}
$$
\sup_{\tau\in\cT_{f}}\inf_{\qr\in\overline{\cQ}}\ex_{\qr}[Y_{\tau}] = \inf_{\qr\in\overline{\cQ}}\sup_{\tau\in\cT_{f}}\ex_{\qr}[Y_{\tau}].
$$
\end{proposition}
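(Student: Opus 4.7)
Since $\sup_{\tau\in\cT_{f}}\inf_{\qr\in\overline{\cQ}}\ex_{\qr}[Y_{\tau}]\leq\inf_{\qr\in\overline{\cQ}}\sup_{\tau\in\cT_{f}}\ex_{\qr}[Y_{\tau}]$ is automatic, only the reverse inequality requires work. I would proceed by contradiction, assuming some $\beta$ satisfies $\sup_{\tau\in\cT_{f}}\inf_{\qr}\ex_{\qr}[Y_{\tau}]<\beta<\inf_{\qr}\sup_{\tau\in\cT_{f}}\ex_{\qr}[Y_{\tau}]$. Then for every $\qr\in\overline{\cQ}$ there is $\tau\in\cT_{f}$ with $\ex_{\qr}[Y_{\tau}]>\beta$, so the $\sigma(\overline{\cQ},\cX)$-open sets $U_{\tau}=\{\qr\in\overline{\cQ}\mid\ex_{\qr}[Y_{\tau}]>\beta\}$ cover $\overline{\cQ}$. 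Since $\overline{\cQ}$ is $\sigma(\overline{\cQ},\cX)$-compact by Lemma~\ref{compactness condition}, a finite subcover $U_{\tau_{1}},\ldots,U_{\tau_{n}}$ exists, whence $\max_{i}\ex_{\qr}[Y_{\tau_{i}}]>\beta$ for every $\qr\in\overline{\cQ}$.

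Next I would apply Sion's minimax theorem to the bilinear continuous function $(\mu,\qr)\mapsto\sum_{i=1}^{n}\mu_{i}\ex_{\qr}[Y_{\tau_{i}}]$ on the compact convex simplex $\Delta_{n}\subset\R^{n}$ and the compact convex set $\overline{\cQ}$. Using $\max_{\mu\in\Delta_{n}}\sum_{i}\mu_{i}x_{i}=\max_{i}x_{i}$, this yields weights $\mu^{*}\in\Delta_{n}$ such that $\inf_{\qr\in\overline{\cQ}}\sum_{i}\mu_{i}^{*}\ex_{\qr}[Y_{\tau_{i}}]\geq\beta$. It remains to convert this ``randomised'' strategy into a single stopping time in $\cT_{f}$, and this is the role of assumption (A). Applied to a pair $\tau',\tau''\in\cT_{f}\setminus\{0\}$ with $\varepsilon>0$, (A) provides $A\in\cF_{\tau'\wedge\tau''}$ with $\rho_{\cQ}((\eins_{A}-\lambda)(Y_{\tau''}-Y_{\tau'}))\leq\varepsilon$. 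Setting $\tau_{A}:=\tau''\eins_{A}+\tau'\eins_{A^{c}}\in\cT_{f}$ and using $Y_{\tau_{A}}+Y_{\tau_{A^{c}}}=Y_{\tau'}+Y_{\tau''}$, this rewrites as $\rho_{\cQ}(Y_{\tau_{A}}-\lambda Y_{\tau''}-(1-\lambda)Y_{\tau'})\leq\varepsilon$, which, since $\ex_{\qr}[\,\cdot\,]\leq\rho_{\cQ}(\,\cdot\,)$ on $\overline{\cQ}$, yields
$$
\ex_{\qr}[Y_{\tau_{A^{c}}}]\geq\lambda\,\ex_{\qr}[Y_{\tau'}]+(1-\lambda)\,\ex_{\qr}[Y_{\tau''}]-\varepsilon\quad\text{for every }\qr\in\overline{\cQ};
$$
swapping the roles of $\tau'$ and $\tau''$ and replacing $A$ by $A^{c}$ also unlocks the $(1-\lambda)$-combination.

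Iterating this pasting operation generates a set $W\subseteq\Delta_{n}$ of achievable weight-vectors, starting from the vertices $e_{1},\ldots,e_{n}$ and closed under $(\nu,\nu')\mapsto\lambda\nu+(1-\lambda)\nu'$ and $(\nu,\nu')\mapsto(1-\lambda)\nu+\lambda\nu'$. A short iterated-function-system argument shows $W$ is dense in $\Delta_{n}$ for any $\lambda\in(0,1)$ (the presence of both $\lambda$- and $(1-\lambda)$-weights makes the two contractions cover the simplex jointly). Since $\sup_{\qr\in\overline{\cQ}}|\ex_{\qr}[Y_{\tau_{i}}]|\leq\rho_{\cQ}(Y^{*})<\infty$ by \eqref{Integrierbarkeit}, any $\ell^{1}$-approximation $\mu'$ of $\mu^{*}$ in $W$ propagates linearly and uniformly in $\qr$, namely $|\sum_{i}(\mu_{i}^{*}-\mu'_{i})\ex_{\qr}[Y_{\tau_{i}}]|\leq\|\mu^{*}-\mu'\|_{1}\,\rho_{\cQ}(Y^{*})$. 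Choosing the per-iteration errors $\varepsilon$ summable and $\mu'$ sufficiently close to $\mu^{*}$, one constructs $\widetilde{\tau}\in\cT_{f}$ with $\ex_{\qr}[Y_{\widetilde{\tau}}]\geq\beta-\eta$ for every $\qr\in\overline{\cQ}$ and prescribed $\eta>0$; hence $\inf_{\qr}\ex_{\qr}[Y_{\widetilde{\tau}}]\geq\beta-\eta$, contradicting the assumption once $\eta$ is small enough. The main obstacle is precisely this iterative construction: simultaneously establishing density of $W$ in $\Delta_{n}$ and keeping the cumulative $\varepsilon$-errors under control uniformly in $\qr$, where the uniform bound $\rho_{\cQ}(Y^{*})<\infty$ from~\eqref{Integrierbarkeit} becomes essential.
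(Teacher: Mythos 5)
Your proposal is correct in substance and arrives at the result by essentially unfolding the proof of the minimax theorem that the paper invokes as a black box. The paper's own argument applies K\"onig's minimax theorem (\cite[Theorem 4.9]{Koenig1982}) to $h(\qr,\tau)=\ex_{\qr}[-Y_{\tau}]$ on $\overline{\cQ}\times(\cT_{f}\setminus\{0\})$, verifying $\sigma(\overline{\cQ},\cX)$-compactness and convexity of $\overline{\cQ}$ (Lemma \ref{compactness condition}), affineness and continuity of $h(\cdot,\tau)$, and a single-$\lambda$ convexlike condition in $\tau$, which is exactly what assumption (A) delivers through the same pasting $\tau_{A}=\eins_{A}\tau_{1}+\eins_{\Omega\setminus A}\tau_{2}$ that you use. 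Your level-set and finite-subcover reduction, the finite-dimensional minimax step on $\Delta_{n}\times\overline{\cQ}$, and the density of iterated $\lambda$-combinations in the simplex are precisely the internal mechanics of K\"onig's theorem. What your route costs is the bookkeeping of error accumulation (which does work: for a fixed target accuracy only finitely many pastings are needed, and each admits an arbitrarily small $\varepsilon$ by (A), so ``summable'' is more than you need); what it buys is self-containedness.

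There is one genuine gap. Assumption (A) is stated only for $\tau_{1},\tau_{2}\in\cT_{f}\setminus\{0\}$, whereas the proposition ranges over all of $\cT_{f}$, and your finite subcover may contain the zero stopping time, to which the pasting step cannot be applied. The paper disposes of this before anything else: using right-continuity of $Y$ at $0$ and continuity from above of $\rho_{\cQ}$ it shows $\rho_{\cQ}\big(\sup_{l\geq k}|Y_{1/l}-Y_{0}|\big)\to 0$, hence $\sup_{\qr\in\overline{\cQ}}\big|\ex_{\qr}[Y_{1/k}]-\ex_{\qr}[Y_{0}]\big|\to 0$, so that both the sup-inf and the inf-sup over $\cT_{f}$ coincide with those over $\cT_{f}\setminus\{0\}$. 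You need this (or an equivalent) reduction before the covering argument; with it in place, your proof goes through.
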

\begin{proof}
By assumption
$$
Y_{1/k}\to Y_{0}~\mbox{pointwise for}~k\to\infty.
$$
Then 
$$
\sup_{l\geq k}|Y_{1/l} - Y_{0}|\searrow 0~\mbox{pointwise for}~k\to\infty,
$$
and $\sup_{l\geq k}|Y_{1/l} - Y_{0}|\in L^{1}(\cQ)$ due to \eqref{Integrierbarkeit} along with $\sup_{l\geq k}|Y_{1/l} - Y_{0}|\leq 2\sup_{t\in [0,T]}|Y_{t}|$. Since $\rho_{\cQ}$ is continuous from above at $0$, we may conclude
\begin{eqnarray*}
0
\leq 
\big|\inf_{\qr\in\overline{\cQ}}\ex_{\qr}[Y_{1/k}] - \inf_{\qr\in\overline{\cQ}}\ex_{\qr}[Y_{0}]\big|
&\leq&
\sup_{\qr\in\overline{\cQ}}\ex_{\qr}[|Y_{1/k} - Y_{0}|]\\
&\leq& 
\rho_{\cQ}\big(\sup_{l\geq k}|Y_{1/l} - Y_{0}|\big)\to 0\quad\mbox{for}~k\to\infty.
\end{eqnarray*}
In particular
\begin{equation}
\label{ohne Null 1}
\sup_{\tau\in\cT_{f}}\inf_{\qr\in\overline{\cQ}}\ex_{\qr}[Y_{\tau}] = \sup_{\tau\in\cT_{f}\setminus\{0\}}\inf_{\qr\in\overline{\cQ}}\ex_{\qr}[Y_{\tau}],
\end{equation}
and 
\begin{equation}
\label{ohne Null 2}
\inf_{\qr\in\overline{\cQ}}\sup_{\tau\in\cT_{f}}\ex_{\qr}[Y_{\tau}] = \inf_{\qr\in\overline{\cQ}}\sup_{\tau\in\cT_{f}\setminus\{0\}}\ex_{\qr}[Y_{\tau}].
\end{equation}
We want to apply K\"onig's minimax theorem (cf. \cite[Theorem 4.9]{Koenig1982}) to the mapping
$$
h:\overline{\cQ}\times\cT_{f}\setminus\{0\}\rightarrow\R,~(\qr,\tau)\mapsto \ex_{\qr}[-Y_{\tau}].
$$
For preparation we endow $\overline{\cQ}$ with the topology $\sigma(\overline{\cQ},\cX)$ as defined in subsection \ref{spezielle topological closure}. Then by definition of $\sigma(\overline{\cQ},\cX)$ along with \eqref{Wohldefiniertheit} we may observe
\begin{equation}
\label{continuity condition}
h(\cdot,\tau)~\mbox{is continuous w.r.t.}~\sigma(\overline{\cQ},\cX)\quad\mbox{for}~\tau\in\cT_{f}\setminus\{0\}.
\end{equation}
By convexity of $\overline{\cQ}$ (see \eqref{Vorbereitungen}) we may also observe for $\qr_{1},\qr_{2}\in\cQ, \lambda\in [0,1], \tau\in\cT_{f}$
\begin{equation}
\label{concavity condition}
h(\lambda\qr_{1} + (1-\lambda)\qr_{2},\tau) = \lambda h(\qr_{1},\tau) + (1-\lambda) h(\qr_{2},\tau).
\end{equation}
In view of K\"onig's minimax result along with \eqref{continuity condition}, \eqref{concavity condition} and Lemma \ref{compactness condition} it remains to investigate when the following property is satisfied.
\begin{align}
\label{convexity condition}
\exists~\lambda\in ]0,1[~\forall~\tau_{1},\tau_{2}\in\cT_{f}\setminus\{0\}:
\inf_{\tau\in\cT_{f}\setminus\{0\}}\sup_{\qr\in \overline{\cQ}}\big[h(\qr,\tau) - \lambda h(\qr,\tau_{1}) - (1-\lambda)h(\qr,\tau_{2})\big]\leq 0
\end{align}
By assumption {\rm (A)}, there exists some $\lambda\in ]0,1[$ such that for $\tau_{1},\tau_{2}\in\cT_{f}\setminus\{0\}$ 
\begin{equation}
\label{(AA)}
\inf_{A\in\cF_{\tau_{1}\wedge\tau_{2}}}\rho_{\cQ}((\eins_{A} - \lambda) (Y_{\tau_{2}} - Y_{\tau_{1}}))\leq 0.
\end{equation}
Next, define for arbitrary $\tau_{1},\tau_{2}\in\cT_{f}\setminus\{0\}$ and $A\in \cF_{\tau_{1}\wedge\tau_{2}}$ the map $\tau_{A} \dot= \eins_{A}\tau_{1} + \eins_{\Omega\setminus A}\tau_{2}$. Since $\cF_{\tau_{1}\wedge\tau_{2}}\subseteq\cF_{\tau_{1}}\cap\cF_{\tau_{2}}$, we obtain 
$A\in \cF_{\tau_{i}}$ for $i=1,2$, and
$$
\{\tau_A\leq t\} = \big(A\cap \{\tau_{1}\leq t\}\big) \cup \big(\Omega\setminus A\cap \{\tau_{2}\leq t\}\big)\in\cF_{t}\quad\mbox{for}~t\in [0,T].
$$
In particular $\tau_{A}\in\cT_{f}\setminus\{0\}$ for $A\in\cF_{\tau_{1}\wedge\tau_{2}}$, and thus in view of \eqref{Vorbereitungen} and \eqref{(AA)}
\begin{eqnarray*}
&&
\inf_{\tau\in\cT_{f}\setminus\{0\}}\sup_{\qr\in \overline{\cQ}}\big[h(\qr,\tau) - \lambda h(\qr,\tau_{1}) - (1-\lambda)h(\qr,\tau_{2})\big]\\
&\leq& 
\inf_{A\in\cF_{\tau_{1}\wedge\tau_{2}}}\sup_{\qr\in \overline{\cQ}}\big[h(\qr,\tau_{A}) - \lambda h(\qr,\tau_{1}) - (1-\lambda)h(\qr,\tau_{2})\big]\\
&\stackrel{\eqref{Vorbereitungen}}{=}&
\inf_{A\in\cF_{\tau_{1}\wedge\tau_{2}}}\rho_{\cQ}\big((\eins_{A} - \lambda)(Y_{\tau_{2}} - Y_{\tau_{1}})\big)\stackrel{\eqref{(AA)}}{\leq} 0.
\end{eqnarray*}
 This shows \eqref{convexity condition}, and by K\"onig's minimax theorem we obtain 
 $$
 \inf_{\tau\in\cT_{f}\setminus\{0\}}\sup_{\qr\in\overline{\cQ}}\ex_{\qr}[-Y_{\tau}]
=
\inf_{\tau\in\cT_{f}\setminus\{0\}}\sup_{\qr\in\overline{\cQ}}h(\qr,\tau).
 $$
In view of \eqref{ohne Null 1} along with \eqref{ohne Null 2} this completes the proof of Proposition \ref{Hilfsminimax}. 
\end{proof}
Now, we are ready to show Theorem \ref{generalminimax}.
\medskip

\noindent
\underline{Proof of Theorem \ref{generalminimax}}:\\[0.1cm]
Under the assumptions of Theorem \ref{generalminimax} we may apply Proposition \ref{same optimal value} along with Proposition \ref{same dual optimal value} to obtain
$$
\sup_{\tau\in\cT}\inf_{\qr\in\overline{\cQ}}\ex_{\qr}[Y_{\tau}] = \sup_{\tau\in\cT}\inf_{\qr\in co(\cQ)}\ex_{\qr}[Y_{\tau}]\quad\mbox{and}\quad\inf_{\qr\in\overline{\cQ}}\sup_{\tau\in\cT}\ex_{\qr}[Y_{\tau}] = \inf_{\qr\in co(\cQ)}\sup_{\tau\in\cT}\ex_{\qr}[Y_{\tau}].
$$
Moreover, in view of Lemma \ref{finite range stopping times} along with Proposition \ref{Hilfsminimax} we have
$$
\sup_{\tau\in\cT}\inf_{\qr\in\overline{\cQ}}\ex_{\qr}[Y_{\tau}] 
= 
\sup_{\tau\in\cT_{f}}\inf_{\qr\in\overline{\cQ}}\ex_{\qr}[Y_{\tau}] 
= 
\inf_{\qr\in\overline{\cQ}}\sup_{\tau\in\cT_{f}}\ex_{\qr}[Y_{\tau}]
=
\inf_{\qr\in\overline{\cQ}}\sup_{\tau\in\cT}\ex_{\qr}[Y_{\tau}].
$$
Now, the statement of Theorem \ref{generalminimax} follows immediately.
\hfill$\Box$
 \subsection{Proof of Proposition \ref{Basiskriterium}}
 \label{proof Basiskriterium}
Let the assumptions from the display of Proposition \ref{Basiskriterium} be fulfilled. Fix an arbitrary $\varepsilon > 0$. Observe $|Y_{\tau_{1}} - Y_{\tau_{2}}|\eins_{\{|Y_{\tau_{1}} - Y_{\tau_{2}}| > k\}}\searrow 0$ for $k\to\infty$. Since $\rho_{\cQ}$ is continuous from above at $0$, we may select some $k_{0}\in\N$ such that 
\begin{equation}
\label{Schneidebedingung}
\rho_{\cQ}\left(|Y_{\tau_{1}} - Y_{\tau_{2}}|\eins_{\{|Y_{\tau_{1}} - Y_{\tau_{2}}| > k_{0}\}}\right)\leq  \varepsilon/3.
\end{equation}
The random variable $|Y_{\tau_{1}} - Y_{\tau_{2}}|\eins_{\{|Y_{\tau_{1}} - Y_{\tau_{2}}| \leq k_{0}\}}$ is bounded so that we may find some random variable $X$ on $(\Omega,\cF,\pr)$ with finite range satisfying 
$$
\sup_{\omega\in\Omega}\big|(Y_{\tau_{2}}(\omega) - Y_{\tau_{1}}(\omega))\eins_{\{|Y_{\tau_{1}} - Y_{\tau_{2}}| \leq k_{0}\}}(\omega) - X(\omega)\big|\leq\varepsilon/3
$$
(cf. e.g. \cite[Proposition 22.1]{Koenig1997}). In particular with $\widetilde{Y}~\dot=~Y_{\tau_{2}} - Y_{\tau_{1}}$ 
\begin{equation}
\label{zweite Approximation}
\rho_{\cQ}\big(\big|\widetilde{Y}\eins_{\{|\widetilde{Y}|\leq k_{0}\}} - X\big|\big)
\leq 
\sup_{\omega\in\Omega}|\widetilde{Y}(\omega)\eins_{\{|\widetilde{Y}|\leq k_{0}\}}(\omega) - X(\omega)|
\leq
\varepsilon/3.
\end{equation}
Since $X$ has finite range, there exist pairwise disjoint $B_{1},\dots,B_{r}\in\cF$ and $\lambda_{1},\dots,\lambda_{r}\in\R$ such that $X = \sum_{i=1}^{r}\lambda_{i}\eins_{B_{i}}$. Now, let $(A_{k})_{k\in\N}$ be any sequence in $\cF$. We may observe by assumption that 
any sequence $\big(\mu_{\cQ}(A_{k}\cap B_{i})\big)_{k\in\N}$ is relatively $\|\cdot\|_{\infty}$-compact for $i = 1,\dots,r$ so that there exist a subsequence $(A_{\phi(k)})_{k\in\N}$ and 
$f_{1},\dots,f_{r}\in l^{\infty}(\cQ)$ such that
$$
|\mu_{\cQ}(A_{\varphi(k)}\cap B_{i}) - f_{i}\|_{\infty} \xrightarrow[k \to \infty]{} 0\quad\mbox{for every}~i\in\{1,\dots,r\}.
$$
Then
$$
\sup_{\qr\in\cQ}|\ex_{\qr}[\eins_{A_{\varphi(k)}}\cdot X] - \sum_{i=1}^{r}\lambda_{i} f_{i}(\qr)|
\leq 
\sum_{i=1}^{r}|\lambda_{i}|~\|\mu_{\cQ}(A_{\varphi(k)}\cap B_{i}) - f_{i}\|_{\infty} \xrightarrow[k \to \infty]{} 0.
$$
This means that

\begin{equation}
\label{Basiskompaktheit}
\big\{\big(\ex_{\qr}[\eins_{A}\cdot X]\big)_{\qr\in\cQ}\mid A\in\cF\big\}~\mbox{is relatively}~\|\cdot\|_{\infty}\mbox{-compact}.
\end{equation}
Next, let $L^{1}(\Omega,\cF_{\tau_{1}\wedge\tau_{2}},\pr_{|\cF_{\tau_{1}\wedge\tau_{2}}})$ denote the $L^{1}$-spaces on $(\Omega,\cF_{\tau_{1}\wedge\tau_{2}},\pr_{|\cF_{\tau_{1}\wedge\tau_{2}}})$, whereas we use notation $L^{\infty}(\Omega,\cF_{\tau_{1}\wedge\tau_{2}},\pr_{\cF_{|\tau_{1}\wedge\tau_{2}}})$ for the space of all $\pr_{|\cF_{\tau_{1}\wedge\tau_{2}}}$-essentially bounded random variables. The latter space will be equipped with the weak*-topology $\sigma(L^{\infty}_{\tau_{1}\wedge\tau_{2}},L^{1}_{\tau_{1}\wedge\tau_{2}})$. Since the probability space $(\Omega,\cF_{\tau_{1}\wedge\tau_{2}},\pr_{|\cF_{\tau_{1}\wedge\tau_{2}}})$ is assumed to be atomless, we already know from \cite[Lemma 3]{KingmanRobertson1968}
that $\{\eins_{A}\mid A\in\cF_{\tau_{1}\wedge\tau_{2}}\}$ is a $\sigma(L^{\infty}_{\tau_{1}\wedge\tau_{2}},L^{1}_{\tau_{1}\wedge\tau_{2}})$-dense subset of $\Delta$ consisting of all $Z\in L^{\infty}(\Omega,\cF_{\tau_{1}\wedge\tau_{2}},\pr_{|\cF_{\tau_{1}\wedge\tau_{2}}})$ satisfying 
$0\leq Z\leq 1$ $\pr$-a.s.. In particular we may find a net $(A_{i})_{i\in I}$ such that 
$(\eins_{A_{i}})_{i\in I}$ converges to $1/2$ w.r.t. $\sigma(L^{\infty}_{\tau_{1}\wedge\tau_{2}},L^{1}_{\tau_{1}\wedge\tau_{2}})$. In view of \eqref{Basiskompaktheit}, there is a subnet $(\eins_{A_{i(j)}})_{j\in J}$ such that
$$
\lim_{j }\sup_{\qr\in\cQ}|\ex_{\qr}[\eins_{A_{i(j)}} X] - f(\qr)| = 0\quad\mbox{for some}~f\in l^{\infty}(\cQ).
$$
Notice further that $\ex\big[X \frac{d\qr}{d\pr}~|~\cF_{\tau_{1}\wedge\tau_{2}}\big]$ belongs to $L^{1}(\Omega,\cF_{\tau_{1}\wedge\tau_{2}},\pr_{|\cF_{\tau_{1}\wedge\tau_{2}}})$ for every $\qr\in\cQ$. This implies for any $\qr\in\cQ$
\begin{eqnarray*}
f(\qr) = \lim_{j \to \infty}\ex_{\qr}[\eins_{A_{i(j)}} X] 
= \lim_{j \to \infty}\ex\left[\eins_{A_{i(j)}} X \frac{d\qr}{d\pr}\right] 
&=& 
\lim_{j \to \infty}\ex\left[\eins_{A_{i(j)}} \ex \left[X \frac{d\qr}{d\pr}~\bigg|~\cF_{\tau_{1}\wedge\tau_{2}}\right]\right]\\
&=&
\ex\left[\ex \left[X \frac{d\qr}{d\pr}~\bigg|~\cF_{\tau_{1}\wedge\tau_{2}}\right]/2\right]
=
\ex_{\qr}\big[X/2\big].
\end{eqnarray*}
Hence
\begin{equation}
\label{dritte Approximation}
\sup_{\qr\in\cQ}\big|\ex_{\qr}[\eins_{A_{i(j_{0})}} X] - \ex_{\qr}\big[X/2\big]\big| < \varepsilon/3\quad\mbox{for some}~j_{0}\in J.
\end{equation}
We may observe directly by sublinearity of $\rho_{\cQ}$ along with \eqref{Schneidebedingung}, \eqref{zweite Approximation} and \eqref{dritte Approximation}
\begin{eqnarray*}
&&
\rho_{\cQ}\big((\eins_{A_{i(j_{0})}} - 1/2)\cdot (Y_{\tau_{2}} - Y_{\tau_{1}})\big)\\
&\leq& 
\rho_{\cQ}\big((\eins_{A_{i(j_{0})}} - 1/2)\cdot \tY_{ >~ k_{0}}\big)+ \rho_{\cQ}\big((\eins_{A_{i(j_{0})}} - 1/2)\cdot (\tY_{ \leq~ k_{0}} - X)\big) + 
\rho_{\cQ}\big((\eins_{A_{i(j_{0})}} - 1/2)\cdot X\big)\\
&\leq& 
\rho_{\cQ}\big(|\tY_{>~k_{0}}|) + \rho_{\cQ}\big(|\tY_{\leq~k_{0}} - X|\big)
+ \rho_{\cQ}\big((\eins_{A_{i(j_{0})}} - 1/2)\cdot X\big)
\leq
\varepsilon/3 + \varepsilon/3 + \varepsilon/3 = \varepsilon,
\end{eqnarray*}
where $\tY_{>~ k_{0}}~\dot=~\tY\cdot\eins_{\{|\tY| > k_{0}\}}$ and $\tY_{\leq ~k_{0}}~\dot=~\tY\cdot\eins_{\{|\tY|\leq k_{0}\}}$.
Hence we have shown
$$
\inf_{A\in\cF_{\tau_{1}\wedge\tau_{2}}}\rho_{\cQ}\big((\eins_{A} - 1/2)\cdot (Y_{\tau_{2}} - Y_{\tau_{1}})\big)\leq \varepsilon
$$
which completes the proof by sending $\varepsilon\searrow 0$. 
\hfill$\Box$
\subsection{Proof of Theorem \ref{secondMinimax}}
\label{proof of secondMinimax}
Note first that for $\tau_{1},\tau_{2}\in\cT_{f}\setminus\{0\}$, there is some $t > 0$ such that $\cF_{t}\subseteq\cF_{\tau_{1}\wedge\tau_{2}}$. Therefore by assumption \eqref{nonatomicdominated} the 
probability space $(\Omega,\cF_{\tau_{1}\wedge\tau_{2}},\pr_{|\cF_{\tau_{1}\wedge\tau_{2}}})$ is atomless for $\tau_{1},\tau_{2}$ from $\cT_{f}\setminus\{0\}$. Then in view of Proposition \ref{Basiskriterium} along with Theorem \ref{generalminimax} it remains to show the following auxiliary result.
\begin{lemma}
\label{Stetigkeit von oben}
Let assumption \eqref{Integrierbarkeit} be fulfilled, and let the range of $\mu_{\cQ}$ be 
relatively $\|\cdot\|_{\infty}$-compact. If $\rho_{\cQ}(Y^{*}\eins_{\{Y^{*} > a\}})\to 0$ for $a\to\infty$, then $\rho_{\cQ}$ is continuous from above at $0$.
\end{lemma}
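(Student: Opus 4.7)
\noindent
Let $X_n \in \cX$ with $X_n \searrow 0$ $\pr$-a.s.; monotonicity yields $0 \leq X_n \leq X_1$, and by definition of $\cX$ there exists $C > 0$ with $X_1 \leq C(Y^{*} + 1)$. Fix $\varepsilon > 0$. The plan is to split
\[
\rho_{\cQ}(X_n) \leq \rho_{\cQ}\bigl(X_n \eins_{\{Y^{*} > a\}}\bigr) + \rho_{\cQ}\bigl(X_n \eins_{\{Y^{*} \leq a\}}\bigr)
\]
and control each summand. For the tail, $X_n \eins_{\{Y^{*} > a\}} \leq C(Y^{*} + 1)\eins_{\{Y^{*} > a\}}$, so sublinearity of $\rho_{\cQ}$ gives
\[
\rho_{\cQ}\bigl(X_n \eins_{\{Y^{*} > a\}}\bigr) \leq C \rho_{\cQ}\bigl(Y^{*} \eins_{\{Y^{*} > a\}}\bigr) + C \sup_{\qr \in \cQ} \qr(Y^{*} > a).
\]
The first term tends to $0$ as $a \to \infty$ by hypothesis; for the second, Markov's inequality together with $\rho_{\cQ}(Y^{*}) < \infty$ (from \eqref{Integrierbarkeit}) gives $\sup_{\qr \in \cQ} \qr(Y^{*} > a) \leq \rho_{\cQ}(Y^{*})/a \to 0$. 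Choose $a$ so large that the sum of both contributions is below $\varepsilon/2$, uniformly in $n$.

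\medskip

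\noindent
The decisive step is to extract \emph{uniform $\pr$-absolute continuity} of $\cQ$ from the range compactness, i.e.\ that $\pr(B_k) \to 0$ implies $\sup_{\qr \in \cQ} \qr(B_k) \to 0$ for any sequence $(B_k)_{k \in \N}$ in $\cF$. I would prove this via a subsequence principle: every subsequence of $\bigl(\mu_{\cQ}(B_k)\bigr)_{k}$ admits, by relative compactness, a further $\|\cdot\|_{\infty}$-convergent sub-subsequence with some limit $f \in l^{\infty}(\cQ)$; for each fixed $\qr \in \cQ$ we have $\qr(B_k) \to 0$ by $\qr \ll \pr$ and dominated convergence, so $f(\qr) = 0$ for every $\qr$, i.e.\ $f \equiv 0$; hence the $\|\cdot\|_{\infty}$-norm of $\mu_{\cQ}(B_k)$ tends to $0$ along that sub-subsequence, and the subsequence principle delivers $\sup_{\qr \in \cQ} \qr(B_k) \to 0$. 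This is the only step where the compactness hypothesis is used and the one I expect to be the main technical obstacle.

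\medskip

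\noindent
On $\{Y^{*} \leq a\}$ the variables $X_n$ are uniformly bounded by $C(a+1)$ and converge pointwise to $0$ $\pr$-a.s., so Egorov's theorem provides $A \in \cF$ with $\pr(A^{c})$ arbitrarily small such that $X_n \to 0$ uniformly on $A$. Using the uniform absolute continuity just established, I would choose $A$ so that $\sup_{\qr \in \cQ} \qr(A^{c}) < \varepsilon / \bigl(4C(a+1)\bigr)$; then
\[
\rho_{\cQ}\bigl(X_n \eins_{\{Y^{*} \leq a\}}\bigr) \leq \sup_{\omega \in A} X_n(\omega) + C(a+1)\, \sup_{\qr \in \cQ} \qr(A^{c}),
\]
and for $n$ sufficiently large the first summand is below $\varepsilon/4$, so this piece is below $\varepsilon/2$. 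Combined with the tail estimate from the first paragraph this yields $\rho_{\cQ}(X_n) < \varepsilon$ eventually, and since $\varepsilon > 0$ was arbitrary (and $\rho_{\cQ}(X_n)$ is automatically decreasing by monotonicity), $\rho_{\cQ}$ is continuous from above at $0$.
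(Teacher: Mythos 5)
Your proof is correct and its skeleton coincides with the paper's: the same truncation at $\{Y^{*}\leq a\}$, the same treatment of the tail via $X_{n}\leq C(Y^{*}+1)$ and the hypothesis $\rho_{\cQ}(Y^{*}\eins_{\{Y^{*}>a\}})\to 0$, and --- crucially --- the same subsequence argument exploiting relative $\|\cdot\|_{\infty}$-compactness of the range of $\mu_{\cQ}$ together with the pointwise-in-$\qr$ vanishing of $\qr(B_{k})$ to force the uniform limit $f\equiv 0$. Where you diverge is in how the truncated part is finished. The paper writes $\rho_{\cQ}(X_{n}\eins_{\{Y^{*}\leq k\}})\leq\int_{0}^{C_{k}}\sup_{\qr\in\cQ}\qr(\{X_{n}\eins_{\{Y^{*}\leq k\}}>x\})\,dx$ via the layer-cake formula, applies the compactness argument directly to the level sets $\{X_{n}\eins_{\{Y^{*}\leq k\}}>x\}$, which decrease in $n$ so that their $\qr$-measures vanish simply by continuity from above of each $\qr$, and concludes by dominated convergence in $x$. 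You instead distill the compactness hypothesis into a clean, reusable intermediate statement --- uniform absolute continuity of $\cQ$ with respect to $\pr$ --- and then invoke Egorov's theorem. Your intermediate lemma is arguably the more transparent formulation of what range compactness buys and is exactly what you need again when estimating $\sup_{\qr\in\cQ}\qr(A^{c})$; the paper's route avoids Egorov and only ever needs the pointwise vanishing for monotone set sequences, where it is immediate without appealing to the $\varepsilon$-$\delta$ characterisation of $\qr\ll\pr$. One small point of care: apply Egorov on all of $\Omega$ (where $X_{n}\searrow 0$ $\pr$-a.s.\ and $\pr$ is finite), not merely on $\{Y^{*}\leq a\}$; otherwise $A^{c}$ contains $\{Y^{*}>a\}$ and $C(a+1)\sup_{\qr\in\cQ}\qr(A^{c})$ need not be small for the fixed $a$. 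With $A$ chosen in $\Omega$, the uniform bound $C(a+1)$ is only used on $\{Y^{*}\leq a\}\cap A^{c}$ and your estimate goes through verbatim.
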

\begin{proof}
Let $(X_{n})_{n\in\N}$ be any nonincreasing sequence in $\cX$ with $X_{n}\searrow 0$ $\pr$-a.s., and let $\varepsilon > 0$. As a member of $\cX$ we may find some $C > 0$ such that
$$
0\leq X_{n}\leq X_{1}\leq C(Y^{*} + 1)~\pr\text{-a.s.}\quad\mbox{for}~n\in\N.
$$
Then for any $n\in\N$ and every $k\in\N$ we may observe by sublinearity of $\rho_{\cQ}$
\begin{align}
\nonumber
0\leq\rho_{\cQ}(X_{n})&\leq \rho_{\cQ}(X_{n}\cdot\eins_{\{Y^{*} \leq k\}})
+ 
\rho_{\cQ}(X_{n}\cdot\eins_{\{Y^{*} > k\}})\\
&\leq
\nonumber
\rho_{\cQ}(X_{n}\cdot\eins_{\{Y^{*} \leq k\}}) + \rho_{\cQ}(C(Y^{*} + 1)\cdot\eins_{\{Y^{*} > k\}})\\
&\leq
\label{schneiden}
\rho_{\cQ}(X_{n}\cdot\eins_{\{Y^{*} \leq k\}}) + 2 C\rho_{\cQ}(Y^{*}\cdot\eins_{\{Y^{*} > k\}}).
\end{align}
Next, observe that $\big(X_{n}\cdot\eins_{\{Y^{*} \leq k\}}\big)_{n\in\N}$ is uniformly bounded by some constant say $C_{k}$ for any $k\in\N$. 
Then with 
$\overline{X}_{k,n} \dot= X_{n}\cdot\eins_{\{Y^{*} \leq k\}}$ we obtain for $k,n\in\N$
\begin{equation}
\label{Horizontaldarstellung}
0\leq
\rho_{\cQ}(\overline{X}_{k,n}) 
= 
\sup_{\qr\in\cQ}\int_{0}^{\infty}\qr\big(\big\{\overline{X}_{k,n} > x\big\}\big)~dx 
\leq 
\int_{0}^{C_{k}}\sup_{\qr\in\cQ}\qr\big(\big\{\overline{X}_{k,n}> x\big\}\big)~dx.
\end{equation}
Now fix $k\in\N$ and $x\in ]0, C_{k}[$. Since the range of $\mu_{\cQ}$ is assumed to be relatively compact w.r.t. $\|\cdot\|_{\infty}$, we may find for any subsequence 
$\big(\mu_{\cQ}\big(\big\{\overline{X}_{k,(i(n))}> x\big\}\big)\big)_{n\in\N}$ a further subsequence $\big(\mu_{\cQ}\big(\big\{\overline{X}_{k,(j(i(n)))}> x\big\}\big)\big)_{n\in\N}$ such that
$$
\lim_{n\to\infty}\sup_{\qr\in\cQ}|\mu_{\cQ}\big(\big\{\overline{X}_{k,j(i(n))} > x\big\}\big)(\qr) - f_{k}(\qr)|\to 0
$$
for some $f_{k}\in l^{\infty}(\cQ)$. Furthermore, $\qr\big(\big\{\overline{X}_{k,j(i(n))}> x\big\}\big) \searrow 0$ for $n\to\infty$ if $\qr\in\cQ$.
%
This implies $f_{k}\equiv 0$, and thus $\sup_{\qr\in\cQ}\qr\big(\big\{\overline{X}_{k,n}> x\big\}\big)\searrow 0$ for $n\to\infty$. Hence in view of \eqref{Horizontaldarstellung}, the application of the dominated convergence theorem yields $\rho_{\cQ}(X_{n}\cdot\eins_{\{Y^{*} \leq k\}})\to 0$ for $n\to\infty$ with $k\in\N$ fixed.
Then by \eqref{schneiden}
\begin{eqnarray*}
0\leq\limsup_{n\to\infty}\rho_{\cQ}(X_{n})\leq 2 C\rho_{\cQ}(Y^{*}\cdot\eins_{\{Y^{*} > k\}})\quad\mbox{for}~k\in\N.
\end{eqnarray*}
Finally, by assumption, $C\rho_{\cQ}(Y^{*}\cdot\eins_{\{Y^{*} > k\}})\to 0$ for $k\to\infty$ so that 
$$
0\leq\limsup_{n\to\infty}\rho_{\cQ}(X_{n})\leq 0.
$$
This completes the proof.

\end{proof}
As discussed just before Lemma \ref{Stetigkeit von oben}, the statement of Theorem \ref{secondMinimax} follows immediately by combining Proposition \ref{Basiskriterium} and Theorem \ref{generalminimax} with Lemma \ref{Stetigkeit von oben}.
\hfill$\Box$
\subsection{Proof of Theorem \ref{point compact class}}
\label{Theorem point compact class}

Since $d$ is totally bounded the completion $(\check{\cQ},\check{d})$ of $(\cQ,d)$ is compact (see \cite[Sec. 9.2, Problem 2]{Wilansky1970}). Since 
 $(d\qr/d\pr)_{\qr\in\cQ}$ has $\pr$-almost surely $d$-uniformly continuous paths, we may find some $A\in\cF$ with $\pr(A) = 1$ such that we may define a nonnegative stochastic process $(Z_{\check{\qr}})_{\check{\qr}\in\check{\cQ}}$ such that 
 $Z~\dot=~Z_{\cdot}(\omega)$ is continuous for $\omega\in A$ and $Z_{\qr} = (d\qr/d\pr)$ holds for $\qr\in\cQ$ (see \cite[Theorem 11.3.4]{Wilansky1970}). 

 \medskip

 Now let $(\qr_{n})_{n\in\N}$ be any sequence in $\cQ$. By compactness we may select a subsequence $(\qr_{i(n)})_{n\in\N}$ which converges to some $\check{\qr}\in\check{\cQ}$ w.r.t. $\check{d}$ (see \cite[Theorem 7.2.1]{Wilansky1970}). Since the process $Z$ has $\check{d}$-continuous paths on $A$, we obtain
 $$
 \frac{d\qr_{i(n)}}{d\pr}(\omega) = Z_{\qr_{i(n)}}(\omega)\xrightarrow[n\to\infty]{} Z_{\check{\qr}}(\omega)\quad\mbox{for all}~\omega\in A.
 $$
 Moreover, by assumption, $(d\qr_{i(n)}/d\pr)_{n\in\N}$ is dominated by some $\pr$-integrable random variable $U$. Then the application of the dominated convergence theorem yields
 $$
 \ex\left[\left|\frac{d\qr_{i(n)}}{d\pr} - Z_{\check{\qr}}\right|\right]\xrightarrow[n\to\infty]{} 0.
 $$
 Thus we have shown that $\cQ$ is relatively compact w.r.t. topology of total variation as defined in Remark \ref{total variation}, and Theorem \ref{point compact class} may be concluded by combining Remark \ref{total variation} with Theorem \ref{secondMinimax}.
 \hfill$\Box$
 
\subsection{Proof of Proposition \ref{prop_suffLLN3}}
\label{LLN3}
Fix $t \in [0,T]$. By assumption $(X^{\theta})_{\theta\in\Theta}$ is a nearly sub-Gaussian random field in the sense of the Appendix. Then}
by Proposition \ref{continuity sub Gaussian}, we may fix some separable version $(\widehat{X}_t^\theta)_{\theta \in \Theta}$ of 
$(X_t^\theta)_{\theta \in \Theta}$. 
It is also assumed that there is some $\overline{\theta} \in \Theta$, such that $\ex[\exp(2 \widehat{X}^{\overline{\theta}}_t)] = \ex[\exp(2 X^{\overline{\theta}}_t)] < \infty$ holds.
In addition, by Proposition \ref{continuity sub Gaussian} again, we may also find a nonnegative random variable $U^{\overline{\theta}}_t$ as well as some $A_t\in\cF$ with $\pr(A_t) = 1$, such that 
\begin{eqnarray}
&&
\label{Condition 1}
\ex[\exp(p U^{\overline{\theta}}_t)] < \infty\quad\mbox{for every}~p\in ]0,\infty[\\
&&
\label{Condition 2}
\sup_{\theta\in\Theta}\exp\left(\widehat{X}^{\theta}_t(\omega)\right)\leq\exp\left(U^{\overline{\theta}}_t(\omega)\right) \exp\left(\widehat{X}^{\overline{\theta}}_t(\omega)\right)\quad\mbox{for}~\omega\in A_t.
\end{eqnarray}
By assumption and since $A_t \in \cF_t$, for every $\theta\in\Theta$ 
$$
M^{\theta}_t\doteq\exp\left(\widehat{X}^{\theta}_t - [X^\theta]_t/2\right)\eins_{A_t}
$$
defines a Radon-Nikodym derivative of 
${\qr_{\theta}}_{|\cF_t}$. 
Then due to the nonnegativity of the process $([X^\theta]_t)_{\theta \in \Theta}$ the application of \eqref{Condition 2} yields
\begin{equation*}
\label{Abschaetzung}
\sup_{\theta\in\Theta}M^{\theta}_t\leq \exp\left(U^{\overline{\theta}}_t\right)\exp\left(\widehat{X}^{\overline{\theta}}_t\right)~\quad\mbox{pointwise}.
\end{equation*}
By \eqref{Condition 1} along with the assumptions on $X^{\overline{\theta}}_{t}$, the random variables $\exp(2 U^{\overline{\theta}}_t)$ 
and $\exp(\widehat{X}^{\overline{\theta}}_t)$ are square integrable.
Hence by Cauchy-Schwarz inequality 
$\exp(U^{\overline{\theta}}_t)\exp(\widehat{X}^{\overline{\theta}}_t)$ is integrable so that $(M^{\theta}_t)_{\theta\in\Theta}$ is dominated by some $\pr$-integrable random variable. 
Thus by Theorem \ref{point compact class} it remains to show that $(M^{\theta}_t)_{\theta\in\Theta}$ has $d_{\Theta}$-uniformly continuous paths.
For $\theta, \vartheta\in\Theta$ we may conclude from \eqref{Condition 2} and the nonnegativity of the process $([X^\theta]_t)_{\theta \in \Theta}$
\begin{eqnarray}
|M^{\theta}_t - M^{\vartheta}_t|
&\leq&
\nonumber
 \left(
\exp\left(\widehat{X}^{\theta}_t\right) 
+ \exp\left(\widehat{X}^{\vartheta}_t\right)\right) 
\left(\Big|\widehat{X}^{\theta}_t 
- \widehat{X}^{\vartheta}_t\Big|  
+ \Big|[X^\theta]_t/2 
- [X^\vartheta]_t/2\Big|\right)\eins_{A_t}\\
&\leq& 
\nonumber
\sup_{\theta\in\Theta}
\exp\left(\widehat{X}^{\theta}_t\right) 
\left(\Big|\widehat{X}^{\theta}_t 
- \widehat{X}^{\vartheta}_t\Big|  
+ \Big|[X^\theta]_t/2 
- [X^\vartheta]_t/2\Big|\right)\eins_{A_t}\\
&\stackrel{\eqref{Condition 2}}{\leq}& 
\label{gleichmassige Pfade}
\exp\left(U^{\overline{\theta}}_t\right) 
\exp\left(\widehat{X}^{\overline{\theta}}_t\right) 
\left(\Big|\widehat{X}^{\theta}_t 
- \widehat{X}^{\vartheta}_t\Big|  
+ \Big|[X^\theta]_t/2 
- [X^\vartheta]_t/2\Big|\right)\eins_{A_t}.
\end{eqnarray}
In view of Proposition \ref{continuity sub Gaussian} the process $(\widehat{X}^\theta_t)_{\theta\in\Theta}$  has $d_{\Theta}$-uniformly continuous paths and $([X^\theta]_t)_{\theta \in \Theta}$ satisfies this property by assumption. 
Thus by \eqref{gleichmassige Pfade}, $(M^{\theta}_t)_{\theta\in\Theta}$ has $d_{\Theta}$-uniformly continuous paths.
\hfill$\Box$
\subsection{Proof of Example \ref{stochastic integrals}}
\label{proof of stochastic integrals}
Firstly, each $X^{\psi}$ is a centered martingale. Secondly, by time change we may construct 
an enlargement $\oOFFP$ of the filtered probability space $\OFFP$ with $\overline{\Omega} = \Omega\times\widetilde{\Omega}$ for some set $\widetilde{\Omega}$ such that for  every fixed pair $\psi, \phi \in \Psi$ there exists a Brownian motion $\overline{Z}^{\psi, \phi}$ with $(\overline{Z}^{\psi, \phi}_t)_{0 \leq t \leq T}$ being adapted to $(\cF_t)_{0 \leq t \leq T}$, and for every $t \in[0,T]$ it holds
\begin{align*}
\overline{X}_{t}^\psi - \overline{X}_{t}^\phi
= \overline{Z}^{\psi, \phi}_{\int_0^{t} (\psi-\phi)^2(u, V_u)~du}
\end{align*}
(see e.g. \cite[proof of Theorem V.1.7]{RevuzYor1991}). 
Here we set for each $\overline{\omega} = (\omega, \tilde{\omega})\in \overline{\Omega}$
\begin{align*}
\overline{X}_{t}^\psi(\overline{\omega}) 
- \overline{X}_{t}^\phi(\overline{\omega})
= \overline{X}_{t}^\psi(\omega, \tilde{\omega}) 
- \overline{X}_{t}^\phi(\omega, \tilde{\omega})~
\dot=~X_{t}^\psi(\omega) - X_{t}^\phi(\omega).
\end{align*}
Then for fixed $\lambda > 0$, $t \in [0,T]$ and $\psi, \phi \in \Psi$ we obtain 
\begin{align*}
\mathbb{E}[ \exp(\lambda(X_{t}^\psi - X_{t}^\phi))]
= \mathbb{E}_{\overline{\pr}}\left[ 
\exp\left(\lambda 
\overline{Z}^{\psi, \phi}_{\int_0^{t} (\psi - \phi)^2(u, V_u)~du}
\right)\right]
\leq \mathbb{E}_{\overline{\pr}}\left[ 
\exp\left(\lambda 
\max_{0 \leq s \leq d(\psi, \phi)^2} \overline{Z}^{\psi, \phi}_s
\right)\right].
\end{align*}
Now we derive by the reflection principle for Brownian motion
\begin{align*}
\mathbb{E}_{\overline{\pr}}\left[ 
\exp\left(\lambda 
\max_{0 \leq s \leq d(\psi, \phi)^2} \overline{Z}^{\psi, \phi}_s
\right)\right] 
\leq 2 \mathbb{E}_{\overline{\pr}}\left[ 
\exp \left( \lambda \overline{Z}^{\psi, \phi}_{d(\psi, \phi)^2} 
\right)\right] 
& = 2 \exp \left( \frac{\lambda^2 d(\psi, \phi)^2}{2} \right).
\end{align*}
Hence $(X_t^{\psi},\psi\in\Psi)$ is a nearly sub-Gaussian family  of  local martingales with $C = 2$. 
\hfill$\Box$

\subsection{Proof of Proposition \ref{bereits Zeitkonsistenz}}
\label{Zeitkonsistenz}
Let $\widehat{\cQ}$ and $\widehat{\cQ}^{e}$ be defined as in Proposition \ref{bereits Zeitkonsistenz}. Furthermore, let $L^{p}\OFP$ denote the classical $L^{p}$-space on $\OFP$ for $p\in [1,\infty]$. We shall need the following auxiliary result for preparation.
\begin{lemma}
\label{Kompaktheit konvex Huelle}
The set $\mathbb{F}_{\widehat{\cQ}}~\dot=~ \{d\qr/d\pr~|~\qr\in\widehat{\cQ}\}$ is closed w.r.t. the $L^{1}-$norm. It is even compact w.r.t. the $L^{1}$-norm if 
$\cQ$ is relatively compact w.r.t. the topology of total variation. In this case 
$
\mathbb{F}_{\widehat{\cQ}^{e}}~\dot=~\{d\qr/d\pr~|~\qr\in\widehat{\cQ}^{e}\}
$
is relatively compact w.r.t. the $L^{1}$-norm.
\end{lemma}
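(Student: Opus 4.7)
\noindent\textbf{Plan for the proof of Lemma \ref{Kompaktheit konvex Huelle}.}
The plan is to identify $\mathbb{F}_{\widehat{\cQ}}$ with the $L^{1}(\pr)$-norm closed convex hull of $\mathbb{F}_{\cQ}~\dot=~\{d\qr/d\pr\mid \qr\in\cQ\}$, and then to combine Mazur's theorem with the isometry between the total variation distance and the $L^{1}(\pr)$-distance of densities, $d_{tv}(\qr_{1},\qr_{2}) = \tfrac{1}{2}\|d\qr_{1}/d\pr - d\qr_{2}/d\pr\|_{L^{1}(\pr)}$, valid for $\qr_{1},\qr_{2}\ll\pr$.

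First I would check the $L^{1}(\pr)$-closedness of $\mathbb{F}_{\widehat{\cQ}}$. If $f_{n} = d\qr_{n}/d\pr$ with $\qr_{n}\in\widehat{\cQ}$ and $f_{n}\to f$ in $L^{1}(\pr)$, then along a $\pr$-a.s.\ convergent subsequence we get $f\geq 0$ $\pr$-a.s., while $\int f\,d\pr = \lim_{n}\int f_{n}\,d\pr = 1$, so that $f$ is the density of a probability measure $\qr\ll\pr$. For arbitrary $X\in L^{\infty}(\pr)$ one has $\ex_{\qr_{n}}[X] = \ex[Xf_{n}] \to \ex[Xf] = \ex_{\qr}[X]$, and passing to the limit in the defining inequality $\ex_{\qr_{n}}[X]\geq\inf_{\qr'\in\cQ}\ex_{\qr'}[X]$ yields $\qr\in\widehat{\cQ}$, i.e.\ $f\in\mathbb{F}_{\widehat{\cQ}}$.

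Next comes the crucial identification step. Applying the defining inequality to $-X$ shows that $\qr\in\widehat{\cQ}$ is equivalent to the sublinear domination $\ex_{\qr}[X] \leq \sup_{\qr'\in\cQ}\ex_{\qr'}[X]$ for every $X\in L^{\infty}(\pr)$. Testing this against $X\equiv\pm 1$ forces $\int f\,d\pr = 1$, and testing against $X = -\eins_{\{f<0\}}$ forces $f\geq 0$ $\pr$-a.s., so that $\mathbb{F}_{\widehat{\cQ}}$ coincides with the set of all $f\in L^{1}(\pr)$ fulfilling $\int Xf\,d\pr \leq \sup_{\qr'\in\cQ}\int X\,d\qr'$ for every $X\in L^{\infty}(\pr)$. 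By the bipolar theorem for the dual pair $(L^{1}(\pr),L^{\infty}(\pr))$ (in the form used in Lemma \ref{compactness condition}, cf.\ \cite{Koenig2001}) this set equals the $\sigma(L^{1},L^{\infty})$-closed convex hull of $\mathbb{F}_{\cQ}$, which by Mazur's theorem coincides with the $L^{1}$-norm closed convex hull $\overline{co}(\mathbb{F}_{\cQ})$.

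Finally, assume $\cQ$ is relatively compact in total variation. Since all members of $\cQ$ are $\ll\pr$, the isometry mentioned in the first paragraph shows that $\mathbb{F}_{\cQ}$ is relatively norm-compact in $L^{1}(\pr)$. Applying Mazur's theorem that the closed convex hull of a norm-compact subset of a Banach space is again norm-compact, we conclude that $\overline{co}(\mathbb{F}_{\cQ})=\mathbb{F}_{\widehat{\cQ}}$ is compact in $L^{1}(\pr)$. Since $\mathbb{F}_{\widehat{\cQ}^{e}}\subseteq\mathbb{F}_{\widehat{\cQ}}$, it is automatically relatively compact. The main obstacle is the bipolar-type identification of $\mathbb{F}_{\widehat{\cQ}}$ with $\overline{co}(\mathbb{F}_{\cQ})$: it requires first the sublinear-domination reformulation of the membership condition, and then the observation that positivity and total mass $1$ of the candidate density are free consequences of the domination by $X\mapsto\sup_{\qr'\in\cQ}\ex_{\qr'}[X]$ alone.
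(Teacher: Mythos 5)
Your proposal is correct and follows essentially the same route as the paper: both identify $\mathbb{F}_{\widehat{\cQ}}$ with the $L^{1}$-norm closed convex hull of $\mathbb{F}_{\cQ}$ via a bipolar-type argument (the paper cites \cite[Theorem 1.4]{Koenig2001} for exactly the identification you derive by hand, combined with Mazur's theorem equating weak and norm closures of convex sets), and then invoke the compactness of the closed convex hull of a norm-compact set together with the total-variation/$L^{1}$ isometry and the inclusion $\mathbb{F}_{\widehat{\cQ}^{e}}\subseteq\mathbb{F}_{\widehat{\cQ}}$. Your separate direct verification of closedness is redundant once the convex-hull identification is in place, but it is correct.
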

\begin{proof}
The set $\mathbb{F}_{\widehat{\cQ}}$ is obviously convex, and it is also known to be the topological closure of the convex hull $co(\mathbb{F}_{\cQ})$ of
$
\mathbb{F}_{\cQ}
$
w.r.t. the weak topology on $L^{1}(\Omega,\cF,\pr)$ (see \cite[Theorem 1.4]{Koenig2001}). Thus by convexity, $\mathbb{F}_{\widehat{\cQ}}$ is also the closed convex hull of $\mathbb{F}_{\cQ}$ w.r.t. the $L^{1}$-norm topology. Moreover, if $\cQ$ is relatively compact w.r.t. the topology of total variation, the set $\mathbb{F}_{\cQ}$ is relatively $L^{1}$-norm compact so that its $L^{1}$-norm closed convex hull $\mathbb{F}_{\widehat{\cQ}}$ is $L^{1}$-norm compact (see e.g. \cite[Theorem 5.35]{AliprantisBorder2006}). This completes the proof because $\mathbb{F}_{\widehat{\cQ}^{e}} \subseteq\mathbb{F}_{\widehat{\cQ}}$.
\end{proof}

\bigskip

\noindent
\underline{Proof of Proposition \ref{bereits Zeitkonsistenz}}:\\[0.1cm]


The implication $(5)\Rightarrow (4)$ is already known (see \cite[Lemma 5.3]{Trevino2008}, and \cite[Lemma 6.48]{FoellmerSchied2011} for the time-discrete case). Concerning the implication $(1)\Rightarrow (2)$ let $\tau\in\cT$ and $X\in L^{\infty}\OFPT$. Then 
$Z~\dot=~\essinf_{\qr\in\cQ}~\ex_{\qr}[X|\cF_{\tau}]\in L^{\infty}\OFPT$ and is $\cF_{\tau}$-measurable so that 
$$
\essinf_{\qr\in\cQ}~\ex_{\qr}[X|\cF_{\tau}] = \essinf_{\qr\in\cQ}~\ex_{\qr}[Z|\cF_{\tau}].
$$
Then by time-consistency (statement (1))
$$
\inf_{\qr\in\cQ}~\ex_{\qr}[X] = \inf_{\qr\in\cQ}~\ex_{\qr}[Z]
$$
which shows (2).

Next, want to show that (3) may be concluded from (2). Firstly, (2) obviously implies
$$
\inf_{\qr\in\cQ}\ex_{\qr}[X] = \inf_{\qr\in\cQ}~\ex_{\qr}\big[\essinf_{\qr\in\cQ}\ex_{\qr}[X|\cF_{\tau}]\big]\quad\mbox{for}~\tau\in\cT~\mbox{and}~X\in L^{\infty}\OFPT,
$$
which may be rewritten by
\begin{equation}
\label{recursiveness}
\rho_{0}(X) = \rho_{0}(-\rho_{\tau}(X))\quad\mbox{for}~\tau\in\cT~\mbox{and}~X\in L^{\infty}\OFPT,
\end{equation}
where
$$
\rho_{s}(X)\doteq \esssup_{\qr\in\cQ}~\ex_{\qr}[-X|\cF_{s}]\quad\mbox{for}~X\in L^{\infty}\OFP\quad(s\in\{0,\tau\}, \tau \in \cT).
$$
Since each member of $\cQ$ is equivalent to $\pr$ we may observe from \eqref{recursiveness} that for every $\tau\in\cT \setminus \{0\}$ the functions $\rho_{0},\rho_{\tau}$ fulfill the assumptions and statement (a) from Theorem 11.22 in \cite{FoellmerSchied2011}. Then in the proof of this theorem it is shown 
$$
\rho_{\tau}(X) = \esssup_{\qr\in\widehat{\cQ}^{e}}~\ex_{\qr}[-X|\cF_{\tau}]\quad\mbox{for}~X\in L^{\infty}\OFPT
$$
so that
\begin{equation}
\label{zeitkonsistente Darstellung}
\essinf_{\qr\in\cQ}~\ex_{\qr}[X|\cF_{\tau}] = \essinf_{\qr\in\widehat{\cQ}^{e}}~\ex_{\qr}[X|\cF_{\tau}]\quad\mbox{for}~\tau\in\cT~\mbox{and}~X\in L^{\infty}\OFPT.
\end{equation}
In order to verify statement (3) it is left to show that $\widehat{\cQ}^{e}$ is stable under the pasting. So let $\qr_{1},\qr_{2}\in\widehat{\cQ}^{e}, \tau\in\cT$, and let $\overline{\qr}$ denote the pasting of $\qr_{1}, \qr_{2}$ in $\tau$. Then for any $\pr$-essentially bounded random variable $X$
\begin{eqnarray*}
\ex_{\overline{\qr}}[X] 
= 
\ex_{\qr_{1}}\big[\ex_{\qr_{2}}[X|\cF_{\tau}]\big] 
\geq 
\ex_{\qr_{1}}\big[\essinf_{\qr\in\widehat{\cQ}^{e}} \ex_{\qr}[X|\cF_{\tau}]\big]
&\stackrel{\eqref{zeitkonsistente Darstellung}}{=}& 
\ex_{\qr_{1}}\big[\essinf_{\qr\in\cQ} \ex_{\qr}[X|\cF_{\tau}]\big]\\
&\geq& 
\inf_{\qr\in \widehat{\cQ}^{e}}\ex_{\qr}\big[\essinf_{\qr\in\cQ} \ex_{\qr}[X|\cF_{\tau}]\big]\\
&\stackrel{\eqref{zeitkonsistente Darstellung}}{=}&
\inf_{\qr\in \cQ}\ex_{\qr}\big[\essinf_{\qr\in\cQ} \ex_{\qr}[X|\cF_{\tau}]\big].
\end{eqnarray*}
Hence $\ex_{\overline{\qr}}[X]\geq\inf_{\qr\in\cQ}\ex_{\qr}[X]$ holds due to statement (2). Therefore $\overline{\qr}$ belongs to $\widehat{\cQ}$, and thus also to $\widehat{\cQ}^{e}$.
\medskip

Let us now turn over to the implication $(3)\Rightarrow (1)$. So let us assume that (3) is valid and let $\overline{X}, X\in L^{\infty}\OFPT$ as well as $\sigma, \tau\in\cT$ with $\sigma\leq\tau$ such that
$$
\essinf_{\qr\in\cQ}\ex_{\qr}[\overline{X}|\cF_{\tau}]\leq \essinf_{\qr\in\cQ}\ex_{\qr}[X|\cF_{\tau}].
$$
In view of \eqref{zeitkonsistente Darstellung} this means
\begin{equation}
\label{Zeitkonsistenz-Bedingung}
\essinf_{\qr\in\widehat{\cQ}^{e}}\ex_{\qr}[\overline{X}|\cF_{\tau}]\leq \essinf_{\qr\in\widehat{\cQ}^{e}}\ex_{\qr}[X|\cF_{\tau}].
\end{equation}
Let $\varepsilon > 0$ with $|\overline{X}|\leq\varepsilon$ $\pr$-a.s., and define the uniformly bounded, nonnegative càdlàg-process $H~\dot=~(H_{t})_{0 \leq t \leq T}$ via 
$H_{t}~\dot=~\eins_{\{T\}}(t) (\varepsilon - \overline{X})$. Since $\widehat{\cQ}^{e}$ is stable under pasting, the application of Lemma 4.17 in \cite{Trevino2008} to $H$ yields
$$
\esssup_{\qr\in\widehat{\cQ}^{e}}\ex_{\qr}\big[\varepsilon - \overline{X}~|~\cF_{\sigma}~\big] 
= 
\esssup_{\qr\in\widehat{\cQ}^{e}}\ex_{\qr}\big[\esssup_{\qr\in\widehat{\cQ}^{e}}\ex_{\qr}[\varepsilon - \overline{X}~|~\cF_{\tau}]~|~\cF_{\sigma}~\big]. 
$$
In particular, we obtain
$$
\essinf_{\qr\in\widehat{\cQ}^{e}}\ex_{\qr}[\overline{X}|\cF_{\sigma}] 
=\essinf_{\qr\in\widehat{\cQ}^{e}}\ex_{\qr}\big[\essinf_{\qr\in\widehat{\cQ}^{e}}\ex_{\qr}[\overline{X}|\cF_{\tau}]~\big|~\cF_{\sigma}\big].
$$
Then in view of \eqref{zeitkonsistente Darstellung} along with \eqref{Zeitkonsistenz-Bedingung} this implies
$$
\essinf_{\qr\in\cQ}\ex_{\qr}[\overline{X}|\cF_{\sigma}]\leq \essinf_{\qr\in\cQ}\ex_{\qr}[X|\cF_{\sigma}].
$$
\medskip

Concerning implication $(4)\Rightarrow (2)$ let 
$X\in L^{\infty}\OFPT$. There is some $C > 0$ such that 
$X + C\geq 1$ $\pr$-a.s.. Then $Z_{t}~\dot=~\eins_{\{T\}}(t)\cdot (X + C)$ defines 
a uniformly bounded, nonnegative adapted càdlàg process $Z = (Z_{t})_{0 \leq t \leq T}$ from 
$\cS(\cQ)$. 
Furthermore, let us fix $\overline{\qr}\in\cQ$ and $\tau\in\cT$. By statement (4) we shall find some sequence $(\qr^{k})_{k\in\N}$ in $\cQ$ whose members coincide with $\overline{\qr}$ on $\cF_{\tau}$ such that 
$$
\ex_{\qr_{k}}[X + C|\cF_{\tau}] = \esssup_{\sigma\in\cT,\sigma\geq\tau}~\ex_{\qr_{k}}[Z_{\sigma}|\cF_{\tau}] \xrightarrow[k\to\infty]{}  \essinf_{\qr\in\cQ}~\esssup_{\sigma\in\cT,\sigma\geq\tau}~\ex_{\qr}[Z_{\sigma}|\cF_{\tau}]\quad\pr\mbox{-a.s.}.
$$
Since in addition $\esssup_{\sigma\in\cT,\sigma\geq\tau}~\ex_{\qr}[Z_{\sigma}|\cF_{\tau}] = \ex_{\qr}[X + C|\cF_{\tau}]$ holds for every $\qr\in\cQ$, we obtain by dominated convergence theorem
\begin{eqnarray*}
\ex_{\overline{\qr}}[\essinf_{\qr\in\cQ}~\ex_{\qr}[X + C|\cF_{\tau}]] 
=
\lim_{k\to\infty}\ex_{\overline{\qr}}[\ex_{\qr_{k}}[X + C|\cF_{\tau}]]
&=& 
\lim_{k\to\infty}\ex_{\qr^{k}}[\ex_{\qr^{k}}[X + C|\cF_{\tau}]]\\
&=&
\lim_{k\to\infty}\ex_{\qr^{k}}[X + C]\geq\inf_{\qr\in\cQ}\ex_{\qr}[X + C].
\end{eqnarray*}
Here for the second equality we have invoked that $\qr^{k}_{|\cF_{\tau}} = \overline{\qr}_{|\cF_{\tau}}$ holds for every $k\in\N$. Then statement (2) is obvious, and the proof is complete.
\hfill$\Box$

\subsection{Proof of Remark \ref{total variation}}
\label{compact-anti-zeitkonsistent}

If the sets $\widehat{\cQ}$ and $\widehat{\cQ}^{e}$ are defined as in Proposition \ref{bereits Zeitkonsistenz}, then in view of Proposition \ref{bereits Zeitkonsistenz} it remains to show that under the assumptions of Remark \ref{total variation} the 
set $\widehat{\cQ}^{e}$
is not stable under pasting w.r.t. $\OFFP$. Since the set of all probability measures on $\cF$ which are equivalent to $\pr$ is stable under pasting w.r.t. $\OFFP$, enclosing $\widehat{\cQ}^{e}$, we may define the minimal set $\widehat{\cQ}^{\rm st}$ which is stable under pasting and is a superset of $\widehat{\cQ}^{e}$. We want to show 
that $\widehat{\cQ}^{e}$ is a proper subset of $\widehat{\cQ}^{\rm st}$ within the setting of Remark \ref{total variation}. 
The argumentation will be based on the following 
observation.


\begin{lemma}
\label{anti-stabil}
Let $L^{p}\tOFP$ denote the classical $L^{p}$-space on a probability space $\tOFP$ for $p\in [0,\infty]$ and let $(A_{n})_{n\in\N}$ be a sequence in $\overline{\cF}$ satisfying
\begin{align}
\label{eq:23052017a1}
\lim_{n\to\infty}\ex[\eins_{A_{n}}\cdot Z] = \frac{1}{2}\cdot\ex[Z]\quad\mbox{for every}~Z\in L^{1}\tOFP.
\end{align}
Then for any $Z\in L^{1}\tOFP\setminus\{0\}$, the sequence $(\eins_{A_{n}}\cdot Z)_{n\in\N}$ does not have any accumulation point in $L^{1}\tOFP$ w.r.t. the $L^{1}$-norm.
\end{lemma}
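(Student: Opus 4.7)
The plan is to argue by contradiction: assume there is some $W \in L^{1}\tOFP$ that is an $L^{1}$-accumulation point of $(\eins_{A_{n}} Z)_{n\in\N}$, and derive that $Z = 0$ $\overline{\pr}$-a.s.\ from the weak-limit hypothesis \eqref{eq:23052017a1}.

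First I would extract a subsequence $(n_{k})_{k\in\N}$ such that $\eins_{A_{n_{k}}} Z \to W$ in $L^{1}\tOFP$, and then by a standard diagonal extraction pass to a further subsequence (denoted the same way for brevity) along which the convergence holds $\overline{\pr}$-a.s. The pointwise structure of indicators is then the key observation: on $\{Z = 0\}$ the sequence is identically zero, while on $\{Z \neq 0\}$ the ratio $\eins_{A_{n_{k}}} = (\eins_{A_{n_{k}}} Z)/Z$ converges $\overline{\pr}$-a.s.\ to $W/Z$; but since $\eins_{A_{n_{k}}}$ only takes values in $\{0,1\}$, its limit must take values in $\{0,1\}$ as well. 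Hence $W/Z \in \{0,1\}$ $\overline{\pr}$-a.s.\ on $\{Z \neq 0\}$, which means there exists $B \in \overline{\cF}$ (for instance $B \dot= \{W = Z\} \cap \{Z \neq 0\}$) with $W = \eins_{B} Z$ $\overline{\pr}$-a.s.

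Next I would exploit both hypotheses simultaneously. For every bounded $\overline{\cF}$-measurable $V$, the product $Z V$ lies in $L^{1}\tOFP$, so on the one hand \eqref{eq:23052017a1} gives
\[
\ex[\eins_{A_{n_{k}}} Z V] \xrightarrow[k\to\infty]{} \tfrac{1}{2}\, \ex[Z V],
\]
and on the other hand $L^{1}$-convergence of $\eins_{A_{n_{k}}} Z$ to $\eins_{B} Z$ against the bounded test function $V$ yields
\[
\ex[\eins_{A_{n_{k}}} Z V] \xrightarrow[k\to\infty]{} \ex[\eins_{B} Z V].
\]
Identifying the two limits with $V = \eins_{C}$ for arbitrary $C \in \overline{\cF}$, I obtain $\mu(B \cap C) = \tfrac{1}{2} \mu(C)$ for all $C \in \overline{\cF}$, where $\mu$ denotes the finite signed measure $\mu(C) \dot= \ex[\eins_{C} Z]$.

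The conclusion is then immediate: choosing $C \subseteq B$ forces $\mu(C) = \tfrac{1}{2}\mu(C)$, hence $\mu(C) = 0$; choosing $C \subseteq B^{c}$ gives $0 = \mu(\emptyset) = \tfrac{1}{2}\mu(C)$, hence again $\mu(C) = 0$. Splitting an arbitrary $C$ as $(C\cap B) \cup (C\cap B^{c})$ therefore yields $\mu \equiv 0$, i.e.\ $Z = 0$ $\overline{\pr}$-a.s., contradicting $Z \in L^{1}\tOFP \setminus \{0\}$. The only step that requires care is the identification $W = \eins_{B} Z$; everything else is a straightforward combination of the dominated-convergence-type extraction to an a.s.\ subsequence and a test-function computation.
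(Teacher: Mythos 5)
Your argument is correct, but it reaches the contradiction by a route that differs from the paper's in its intermediate identifications. The common core of both proofs is the duality computation: after passing to a subsequence with $\eins_{A_{n_k}}Z\to W$ in $L^{1}\tOFP$, testing against bounded $V$ and invoking \eqref{eq:23052017a1} forces $\ex[WV]=\tfrac12\,\ex[ZV]$ for all $V\in L^{\infty}\tOFP$. The paper exploits this by substituting $V=\bigl[\bigl(\tfrac12 Z-W\bigr)\wedge 1\bigr]\vee(-1)$ to conclude $W=\tfrac12 Z$ outright, and then observes that $\bigl|\eins_{A_n}-\tfrac12\bigr|\equiv\tfrac12$ pointwise, so $\ex\bigl[\bigl|\eins_{A_n}Z-\tfrac12 Z\bigr|\bigr]=\tfrac12\,\ex[|Z|]$ is \emph{constant} in $n$ and can only tend to $0$ if $Z=0$. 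You instead pass to a further a.s.\ convergent subsequence, use the $\{0,1\}$-valuedness of indicators to write the limit as $W=\eins_{B}Z$, and then play the resulting identity $\mu(B\cap C)=\tfrac12\,\mu(C)$ (with $\mu(C)=\ex[\eins_{C}Z]$) against itself on $C\subseteq B$ and $C\subseteq B^{c}$ to force $\mu\equiv 0$, hence $Z=0$. Both arguments are sound; what the paper's version buys is economy --- it needs neither the a.s.\ extraction nor the structural identification $W=\eins_{B}Z$, since the whole obstruction is already visible in the fact that an indicator sits at uniform distance $\tfrac12$ from the constant function $\tfrac12$. What your version buys is that it makes transparent \emph{why} the limit cannot exist: a weak limit equal to $\tfrac12 Z$ is incompatible with a strong limit of the form $\eins_{B}Z$ unless $Z$ vanishes. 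One minor wording point: no diagonal extraction is needed for the a.s.\ convergent sub-subsequence; the standard fact that $L^{1}$-convergence yields an a.s.\ convergent subsequence suffices.
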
 
\begin{proof}
Assume that there is some $Z\in L^{1}\tOFP\setminus\{0\}$
such that the sequence $(\eins_{A_n}Z)_{n\in\N}$
has an accumulation point $X\in L^{1}\tOFP$
w.r.t. the $L^1$-norm.
By passing to a subsequence, we can assume that
\begin{equation}
\label{eq:23052017a1.5}
\ex[|\eins_{A_n}Z - X|]\to 0\quad\mbox{for}~n\to\infty.
\end{equation}
Therefore by H\"older's inequality,
\begin{equation}
\label{eq:23052017a2}
\lim_{n\to\infty}\ex[|\eins_{A_n}ZW -XW|] = 0
\quad\text{for any }
W\in L^\infty\tOFP.
\end{equation}
Applying \eqref{eq:23052017a1} and~\eqref{eq:23052017a2},
we get
$\frac12\ex[ZW]=\ex[XW]$ for any $W\in L^\infty\tOFP$,
that is,
\begin{equation}
\label{eq:23052017a3}
\ex\left[\left(\frac12 Z-X\right)W\right]=0
\quad\text{for any }W\in L^\infty\tOFP.
\end{equation}
Substituting
$W\doteq\left[\left(\frac12 Z-X\right)\wedge1\right]\vee-1$
into~\eqref{eq:23052017a3}, we arrive at $X=\frac12 Z$,
hence, by~\eqref{eq:23052017a1.5},
$$
\frac12\ex[|Z|]
=\ex\left[\left|\left(\eins_{A_n}-\frac12\right)Z\right|\right]
\xrightarrow[n\to\infty]{}0.
$$
This contradicts $\overline{\pr}(\{Z\not= 0\}) > 0$ and completes the
proof.
\end{proof}
\bigskip

\noindent
\underline{Proof of Remark \ref{total variation}}:\\


Let us fix different $\qr_{1},\qr_{2}\in\cQ\subseteq\widehat{\cQ}^{e}$. Since $\widehat{\cQ}^{\rm st}$ is stable under pasting we may define for every $\tau\in\cT$ by
$$
\frac{d\qr^{\tau}}{d\pr}~\dot=~\frac{\ex\left[\frac{d\qr_{1}}{d\pr}~\big|~\cF_{\tau}~\right]}{\ex\left[\frac{d\qr_{2}}{d\pr}~\big|~\cF_{\tau}~\right]}~\frac{d\qr_{2}}{d\pr}
$$
a Radon-Nikodym derivative w.r.t. $\pr$ of some probability measure $\qr^{\tau}\in\widehat{\cQ}^{\rm st}$. In particular, $\qr^{0} = \qr_{2}$ and $\qr^{T} = \qr_{1}$, and using the càdlàg-modifications of the density processes
$$
\left(\ex\left[\frac{d\qr_{i}}{d\pr}~\bigg|~\cF_{t}\right]\right)_{0 \leq t \leq T}\quad(i=1,2)
$$
we derive
$$
\frac{d\qr^{t}}{d\pr}\to \frac{d\qr_{2}}{d\pr}\quad\mbox{for}~t\searrow 0.
$$
Therefore, we may find some $t_{0}\in ]0,T[$ such that $\qr^{t_{0}}\not=\qr^{T}$. Since 
by assumption $(\Omega,\cF_{t_{0}},\pr_{|\cF_{t_{0}}})$ is atomless with 
$L^{1}(\Omega,\cF_{t_{0}},\pr_{|\cF_{t_{0}}})$ being weakly separable we may draw on 
\cite[Lemma 3]{KingmanRobertson1968} (or \cite[Corollary C.4]{BelomestnyKraetschmer2016}) along with \cite[Lemma C.1 and Proposition B.1]{BelomestnyKraetschmer2016} to find some sequence $(A_{n})_{n\in\N}$ in $\cF_{t_{0}}$ such that
$$
\lim_{n\to\infty}\ex[\eins_{A_{n}}\cdot Z] = \frac{1}{2}\cdot\ex[Z]\quad\mbox{for every}~\pr_{|\cF_{t_{0}}}\mbox{-integrable random variable}~Z.
$$
In particular 
\begin{equation}
\label{erste Bedingung anti-Zeitkonsistenz}
\lim_{n\to\infty}\ex[\eins_{A_{n}}\cdot Z] = \lim_{n\to\infty}\ex\big[\eins_{A_{n}}\cdot \ex[Z~|~\cF_{t_{0}}]\big] = \frac{1}{2}\cdot\ex\big[\ex[Z~|~\cF_{t_{0}}]\big] = \frac{1}{2}\cdot\ex[Z]
\end{equation}
holds for every $Z\in L^{1}(\Omega,\cF,\pr)$. 
\medskip

Moreover, $\tau_{n}~\dot=~t_{0}\cdot\eins_{A_{n}} + T\cdot\eins_{\Omega\setminus A_{n}}$ defines a sequence $(\tau_{n})_{n\in\N}$ in $\cT$ which induces the sequence $(\qr^{\tau_{n}})_{n\in\N}$ in $\widehat{\cQ}^{\rm st}$ whose Radon-Nikodym derivatives w.r.t. $\pr$ satisfy
\begin{align*}
\frac{d\qr^{\tau_{n}}}{d\pr} 
= 
\eins_{A_{n}}\cdot \frac{d\qr^{t_{0}}}{d\pr} + 
\eins_{\Omega\setminus A_{n}}\cdot \frac{d\qr^{T}}{d\pr}
= 
\eins_{A_{n}}\cdot\left(\frac{d\qr^{t_{0}}}{d\pr} - \frac{d\qr^{T}}{d\pr}\right) + 
\frac{d\qr^{T}}{d\pr}.
\end{align*}
By choice $d\qr^{t_{0}}/d\pr - d\qr^{T}/d\pr\in L^{1}(\Omega,\cF,\pr_{|\cF})\setminus\{0\}$. So in view of Lemma \ref{anti-stabil} along with \eqref{erste Bedingung anti-Zeitkonsistenz} we may observe that the sequence
$$
\left(\eins_{A_{n}}\cdot\left(\frac{d\qr^{t_{0}}}{d\pr} - \frac{d\qr^{T}}{d\pr}\right)\right)_{n\in\N}
$$
does not have any accumulation point in $L^{1}(\Omega,\cF,\pr_{|\cF})$ w.r.t. the  $L^{1}$-norm, and thus the sequence $(d\qr^{\tau_{n}}/d\pr)_{n\in\N}$ has also no accumulation point. Hence we have found a sequence in 
$
\mathbb{F}_{\widehat{\cQ}^{\rm st}}~\dot=~\{d\qr/d\pr~|~\qr\in\widehat{\cQ}^{\rm st}\}
$
without any accumulation point w.r.t. the $L^{1}$-norm. This means that $\mathbb{F}_{\widehat{\cQ}^{\rm st}}$ is not relatively compact w.r.t. the $L^{1}$-norm. However, the set $\mathbb{F}_{\widehat{\cQ}^{e}}$ from Lemma \ref{Kompaktheit konvex Huelle} has been shown there to be relatively compact w.r.t. the $L^{1}$-norm. Hence $\mathbb{F}_{\widehat{\cQ}^{e}}\not=\mathbb{F}_{\widehat{\cQ}^{\rm st}}$, and thus $\widehat{\cQ}^{e}$ is  a proper subset of $\widehat{\cQ}^{\rm st}$. So by construction of $\widehat{\cQ}^{\rm st}$, the set $\widehat{\cQ}^{e}$ is not stable under pasting w.r.t. $\OFFP$ so that $\cQ$ is not time-consistent w.r.t. $\OFFP$ due to Proposition \ref{bereits Zeitkonsistenz}. The proof of Remark \ref{total variation} is complete. 
\hfill$\Box$

\appendix
\section{Appendix: Paths of nearly sub-Gaussian random fields}
\label{Criterion for LLN}

Let $(\Theta,d)$ be some totally bounded semimetric space with diameter $\Delta$. For $\delta, \varepsilon > 0$ the symbols  ${\cal D}(\delta,d)$ and $N(\Theta,d;\varepsilon)$ are used in an analogous manner as the notations ${\cal D}(\delta,d_{\Theta})$ and $N(\Theta,d_{\Theta};\varepsilon)$ from  Section \ref{applications to parameterized families}. We shall call 
a centered stochastic process $(X^{\theta})_{\theta\in\Theta}$ to be a {\it nearly sub-Gaussian random field w.r.t. $d$} if there is some $C \geq 1$ such that
\begin{equation}
\label{DefinitionNearlySubGaussian}
\ex\left[\exp\big(\lambda (X^{\theta}- X^{\vartheta})\big)\right]\leq C\cdot
\exp\big(\lambda^{2}~d(\theta,\vartheta)^{2}/2\big)\quad\mbox{for}~\theta, \vartheta\in\Theta\quad\mbox{and}\quad\lambda>0.
\end{equation}
Note that by symmetry, condition \eqref{DefinitionNearlySubGaussian} also holds for arbitrary $\lambda\in\R$.
In case of $C = 1$ this definition reduces to the ordinary notion of sub-Gaussian random fields.
For further information on sub-Gaussian random fields see e.g. \cite[Subsection 2.3]{GineNickl2016}. By a suitable change of the semimetric we may describe any nearly sub-Gaussian random field as a Gaussian random field.
\begin{lemma}
\label{Semimetrikwechsel}
If $(X^{\theta})_{\theta\in\Theta}$ is a nearly sub-Gaussian random field w.r.t. $d$, then it is a sub-Gaussian random field w.r.t. $\overline{d}\doteq \varepsilon\cdot d$ for some $\varepsilon > 1$.
\end{lemma}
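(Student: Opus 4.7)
The plan is to exhibit, for every pair $\theta,\vartheta\in\Theta$, a constant $\varepsilon>1$ depending only on $C$ such that $Y:=X^{\theta}-X^{\vartheta}$ satisfies $\ex[\exp(\lambda Y)]\leq\exp(\lambda^{2}\varepsilon^{2}\sigma^{2}/2)$ for every $\lambda\in\R$, where $\sigma:=d(\theta,\vartheta)$. If $\sigma=0$, then \eqref{DefinitionNearlySubGaussian} would yield $\ex[\exp(\lambda Y)]\leq C$ for every $\lambda\in\R$, which together with $\ex[Y]=0$ forces $Y\equiv 0$, making the claim trivial. The whole content of the lemma thus lies in the case $\sigma>0$. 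The first observation is that the naive attempt to write $Ce^{\lambda^{2}\sigma^{2}/2}\leq e^{\lambda^{2}\varepsilon^{2}\sigma^{2}/2}$ pointwise in $\lambda$ must fail near $\lambda=0$: the left-hand side stays above $C>1$ while the right-hand side tends to $1$. A separate, sharper small-$|\lambda|$ estimate that uses centeredness of $Y$ will therefore be required.

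\textbf{An Orlicz-norm detour for small $|\lambda|$.} First I would pass from the MGF bound to a tail bound via a Chernoff optimisation: $\pr(|Y|>t)\leq 2C\exp(-t^{2}/(2\sigma^{2}))$ for $t>0$. The layer-cake formula then gives the Orlicz estimate $\ex[\exp(Y^{2}/K^{2})]\leq 2$ provided $K^{2}\geq 2\sigma^{2}(2C+1)$, and I would fix $K:=\sigma\sqrt{2(2C+1)}$. Next, with $Y'$ an independent copy of $Y$, symmetrisation of $Y-Y'$ together with the elementary inequality $\cosh x\leq e^{x^{2}/2}$ yields
\[
\ex[e^{\lambda Y}]\,\ex[e^{-\lambda Y}]\;=\;\ex[\cosh(\lambda(Y-Y'))]\;\leq\;\ex[e^{\lambda^{2}(Y-Y')^{2}/2}]\;\leq\;\ex[e^{\lambda^{2}Y^{2}}]^{2},
\]
the last step coming from $(Y-Y')^{2}\leq 2Y^{2}+2(Y')^{2}$ and independence. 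For $|\lambda|\leq 1/K$, Jensen applied to the concave map $z\mapsto z^{\lambda^{2}K^{2}}$ gives $\ex[e^{\lambda^{2}Y^{2}}]\leq 2^{\lambda^{2}K^{2}}$; centeredness together with Jensen also forces $\ex[e^{\pm\lambda Y}]\geq 1$. Combining these, $\ex[e^{\lambda Y}]\leq 2^{2\lambda^{2}K^{2}}=\exp(2\lambda^{2}K^{2}\ln 2)$ on $\{|\lambda|\leq 1/K\}$, which has the desired second-order behaviour near $\lambda=0$.

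\textbf{Patching and choice of $\varepsilon$.} For $|\lambda|>1/K$ the original nearly sub-Gaussian bound already suffices: since $\lambda^{2}>1/K^{2}=1/(2\sigma^{2}(2C+1))$, I can absorb $\ln C$ into the exponent and obtain $Ce^{\lambda^{2}\sigma^{2}/2}\leq\exp(\lambda^{2}(\sigma^{2}/2+2\sigma^{2}(2C+1)\ln C))$. Setting
\[
\varepsilon^{2}\;:=\;\max\{8(2C+1)\ln 2,\;1+4(2C+1)\ln C\},
\]
both regimes are dominated by $\exp(\lambda^{2}\varepsilon^{2}\sigma^{2}/2)$. The uniform lower bound $\varepsilon^{2}\geq 24\ln 2>1$, valid for every $C\geq 1$, secures $\varepsilon>1$, and $\varepsilon$ depends only on $C$, so the bound is uniform in $\theta,\vartheta$. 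The main obstacle of the proof is exactly the small-$|\lambda|$ regime treated by the Orlicz detour; every remaining step is a routine application of Chernoff, the layer-cake formula, symmetrisation, and Jensen's inequality.
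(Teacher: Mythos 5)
Your argument is correct, and it is in substance the same route the paper takes: the paper's proof is a one-line appeal to \cite[Lemma 2.3.2]{GineNickl2016} with $\varepsilon=\sqrt{12(2C+1)}$, and your Chernoff--tail--bound, Orlicz ($\psi_2$) estimate, and symmetrization/Jensen steps are precisely a self-contained rederivation of that cited lemma. The only difference is the explicit constant $\varepsilon^{2}=\max\{8(2C+1)\ln 2,\;1+4(2C+1)\ln C\}$ in place of $12(2C+1)$, which is immaterial since the lemma only asserts the existence of some $\varepsilon>1$.
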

\begin{proof}
Let $C > 1$ such that $(X^{\theta})_{\theta\in\Theta}$ satisfies \eqref{DefinitionNearlySubGaussian}. Then $\varepsilon\doteq \sqrt{12(2 C + 1)}$ is as required (cf. \cite[Lemma 2.3.2]{GineNickl2016}).
\end{proof}
The following properties of sub-Gaussian random fields are fundamental.
\begin{proposition}
\label{continuity sub Gaussian}
Let $X~\dot=~(X^{\theta})_{\theta\in\Theta}$ be a nearly sub-Gaussian random field  on some probability space $\tOFP$ w.r.t. $d$. 
If ${\cal D}(\Delta,d) < \infty$, then $X$ admits a separable version, and each separable version of $X$ has $\overline{\pr}$-almost surely bounded and $d$-uniformly continuous paths. In particular, 
for any separable version $\widehat{X}$ and for every $\overline{\theta}\in\Theta$, there is some random variable $U^{\overline{\theta}}$ on $\tOFP$ such that 
$$
\sup_{\theta\in\Theta}\widehat{X}^{\theta}\leq U^{\overline{\theta}} + \widehat{X}^{\overline{\theta}}\quad\overline{\pr}\mbox{-a.s.}\quad\mbox{and}\quad\ex_{\overline{\pr}}[\exp(p U^{\overline{\theta}})] < \infty\quad\mbox{for every}~p\in ]0,\infty[.
$$
\end{proposition}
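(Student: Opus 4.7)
The plan is to reduce to the genuinely sub-Gaussian case via Lemma \ref{Semimetrikwechsel} and then to apply a chaining argument. By that lemma there is some $\varepsilon > 1$ such that $X$ is sub-Gaussian w.r.t.\ $\overline{d} := \varepsilon d$. Since $\overline{d}$-uniform continuity and $d$-uniform continuity are equivalent, $\mathcal{D}(\varepsilon \Delta, \overline{d}) = \varepsilon \mathcal{D}(\Delta, d) < \infty$, and $(\Theta, \overline{d})$ is still totally bounded with diameter $\varepsilon \Delta$, I may assume without loss of generality $C = 1$ from the outset.

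Next I would construct the separable version. Total boundedness of $(\Theta, d)$ gives a countable dense set $\Theta_0 \subseteq \Theta$. The classical Dudley chaining inequality for sub-Gaussian processes yields, for every finite $F \subseteq \Theta_0$ and every $\delta > 0$, a bound of the form
\[
\ex\Bigl[\sup_{\theta,\vartheta \in F,\, d(\theta,\vartheta) \leq \delta} |X^{\theta} - X^{\vartheta}|\Bigr] \leq K\, \mathcal{D}(\delta, d),
\]
with a universal constant $K$. Since $\mathcal{D}(\delta, d) \to 0$ as $\delta \to 0$ (by absolute continuity of the integral, using $\mathcal{D}(\Delta,d) < \infty$), a monotone limit over an increasing sequence of finite sets exhausting $\Theta_0$ shows that $\overline{P}$-almost surely the restriction of $X$ to $\Theta_0$ is bounded and $d$-uniformly continuous. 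Extending pointwise by uniform continuity on the full-measure event, and arbitrarily off it, yields a $d$-uniformly continuous, bounded, separable version $\widehat{X}$. Any other separable version must agree with $\widehat{X}$ on $\Theta_0$ almost surely and thus inherits the same path regularity.

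For the last claim, fix $\overline{\theta} \in \Theta$ and set $U^{\overline{\theta}} := \sup_{\theta \in \Theta}(\widehat{X}^{\theta} - \widehat{X}^{\overline{\theta}})^{+}$; this supremum is finite almost surely by the previous step and equals a countable supremum over $\Theta_0 \cup \{\overline{\theta}\}$. The goal is then to show $\ex_{\overline{P}}[\exp(p U^{\overline{\theta}})] < \infty$ for all $p > 0$. I would obtain this from a Gaussian-type tail estimate
\[
\overline{P}\bigl(U^{\overline{\theta}} > K\,\mathcal{D}(\Delta,d) + u\bigr) \leq 2\, \exp\!\left(-\frac{u^{2}}{2\Delta^{2}}\right) \qquad (u > 0),
\]
which is the standard concentration inequality for suprema of sub-Gaussian processes and follows by sharpening the chaining argument (tracking the exponential Markov inequality along a geometric sequence of nets, as in Theorem 2.3.7 of \cite{GineNickl2016}). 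Such Gaussian tails make $\exp(pU^{\overline{\theta}})$ integrable for every $p$.

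The main obstacle is getting the Gaussian concentration tail in the third step rather than merely a finite exponential moment of some fixed order. The $C \geq 1$ constant in the nearly sub-Gaussian condition spoils the standard one-step Chernoff bound, which is why the reduction via Lemma \ref{Semimetrikwechsel} to $C = 1$ is essential before running the iterated Chernoff-plus-union-bound chaining that produces the required Gaussian tail. Once this is in place, the bounds $\sup_\theta \widehat{X}^\theta \leq U^{\overline{\theta}} + \widehat{X}^{\overline{\theta}}$ and $\ex_{\overline{P}}[\exp(p U^{\overline{\theta}})] < \infty$ for all $p > 0$ are immediate.
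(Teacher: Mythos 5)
Your proposal is correct and follows essentially the same route as the paper: reduction to the genuinely sub-Gaussian case via Lemma \ref{Semimetrikwechsel}, metric-entropy/chaining control of the paths, and a Gaussian concentration bound for the supremum to obtain all exponential moments. The only difference is presentational: where you sketch the chaining and concentration arguments directly, the paper imports them as black boxes from \cite[Theorem 2.3.7]{GineNickl2016} and \cite[Corollary 3.2]{ViensVizcarra2007} and then carries out the same tail integration.
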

\begin{proof}
In view of Lemma \ref{Semimetrikwechsel} we may assume without loss of generality that $X$ is a sub-Gaussian random field w.r.t. $d$.
It is already known that $X$ admits a separable version, and that each such version has $\overline{\pr}$-almost surely bounded and $d$-uniformly continuous paths (see \cite[Theorem 2.3.7]{GineNickl2016}). Now, let us fix any separable version $\widehat{X}$ of $X$ and an arbitrary $\overline{\theta}\in\Theta$. We have
$$
\sup_{\theta\in\Theta}\widehat{X}^{\theta}\leq \sup_{\theta\in\Theta}|\widehat{X}^{\theta} - \widehat{X}^{\overline{\theta}}| + \widehat{X}^{\overline{\theta}},
$$
and the process $\big(|\widehat{X}^{\theta} - \widehat{X}^{\overline{\theta}}|\big)_{\theta\in\Theta}$ is separable due to the separability of $\widehat{X}$. Then we may find some at most countable subset $\Theta_{0}\subseteq\Theta$ such that $\sup_{\theta\in\Theta}|\widehat{X}^{\theta} - \widehat{X}^{\overline{\theta}}| = \sup_{\theta\in\Theta_{0}}|\widehat{X}^{\theta} - \widehat{X}^ {\overline{\theta}}|$ $\overline{\pr}$-a.s.. Hence $U^{\overline{\theta}} := \sup_{\theta\in\Theta_{0}}|X^{\theta} - X^{\overline{\theta}}|$ defines a random variable on $\tOFP$ satisfying
\begin{equation*}
\label{domination}
U^{\overline{\theta}} = \sup_{\theta\in\Theta}|\widehat{X}^{\theta} - \widehat{X}^{\overline{\theta}}|~\overline{\pr}\mbox{-a.s.}.
\end{equation*}
It remains to show that $\ex_{\overline{\pr}}[\exp(p U^{\overline{\theta}})] < \infty$ holds for $p\in ]0,\infty[$. So let us fix $p\in ]0,\infty[$. Firstly, observe that $\widehat{X}$ is again a sub-Gaussian random field. Therefore by \cite[Lemma 2.3.1]{GineNickl2016} we have 
$$
\ex_{\overline{\pr}}\left[\exp\left(\left(\frac{X^{\theta}-X^{\vartheta}}{\sqrt{6}~ d(\theta,\vartheta)}\right)^{2}\right)\right]\leq 2\quad\mbox{for}~\theta, \vartheta\in\Theta,~d(\theta,\vartheta)\not= 0.
$$
Hence we may apply results from \cite{ViensVizcarra2007} w.r.t. the totally bounded semimetric $\overline{d}~\dot=~\sqrt{6}d$. Note that $(\widehat{X}^{\theta})_{\theta\in\Theta}$ is also separable w.r.t. $\overline{d}$, and $\overline{\Delta} = \sqrt{6}\Delta$ holds for the diameter $\overline{\Delta}$ w.r.t. $\overline{d}$. Since $N(\Theta,\overline{d};\varepsilon)\leq 
N(\Theta,d;\varepsilon/\sqrt{6})$ holds for every $\varepsilon > 0$, we obtain 
\begin{equation*}
\label{andere semimetrik}
\int_{0}^{\delta}\sqrt{\ln(N(\Theta,\overline{d};\varepsilon))}~d\varepsilon 
\leq
\int_{0}^{\delta}\sqrt{\ln(N(\Theta,d;\varepsilon/\sqrt{6}))}~d\varepsilon
= 
\sqrt{6} {\cal D}(\delta/\sqrt{6},d)\quad\mbox{for every}~\delta > 0.
\end{equation*} 
Then in view of \cite[Corollary 3.2]{ViensVizcarra2007} we may find some constant $C > 0$ such that
\begin{equation*}
\label{ViensVizcarra1}
\overline{\pr}\left(
\left\{U^{\overline{\theta}} > xC\sqrt{6}{\cal D}(\Delta,d)\right\}\right)\leq 2\exp\left(-\frac{x^{2}}{2}\right)\quad\mbox{for}~x\geq 1.
\end{equation*}
Furthermore
setting $\widehat{C}~\dot=~C\sqrt{6}{\cal D}(\Delta,d)$, we may observe 
\begin{align*}
\int_{1}^{\infty}\hspace*{-0.2cm}\overline{\pr}\left(
\left\{U^{\overline{\theta}} > x\widehat{C}\right\}\right)\exp(xp\widehat{C})~dx
\leq 
\int_{1}^{\infty}\hspace*{-0.2cm}2\exp\left(-\frac{x^{2}}{2}\right)\exp(xp\widehat{C})~dx
&\leq 2\sqrt{2\pi}\exp(p^{2}\widehat{C}^{2}/2).
\end{align*}
Then applying change of variable formula several times, we obtain
\begin{align*}
\int_{\exp(p\widehat{C})}^{\infty}\overline{\pr}\left(
\left\{\exp\big(pU^{\overline{\theta}}\big) > y\right\}\right)~dy
&=
p\widehat{C}\int_{1}^{\infty}\overline{\pr}\left(
\left\{U^{\overline{\theta}} > \widehat{C}u\right\}\right)\exp(p\widehat{C}u)~du 
< 
\infty.
\end{align*}
Hence
$$
\ex_{\overline{\pr}}\big[\exp\big(pU^{\overline{\theta}}\big)\big] 
=
\int_{0}^{\infty}\overline{\pr}\left(
\left\{\exp\big(pU^{\overline{\theta}}\big) > y\right\}\right)~dy 
< \infty
$$
which completes the proof.
\end{proof}
\section*{Acknowledgements}
The authors would like to thank Mikhail Urusov for fruitful discussions and helpful remarks.

\end{document}